\definecolor{Red}{cmyk}{0,1,1,0}
\definecolor{verde}{cmyk}{1,0,1,0}
\definecolor{loka}{cmyk}{.5,0,1,.5}
\definecolor{azul}{cmyk}{1,1,0,0}
\numberwithin{equation}{section}
\newcommand{\be}{\begin{equation}}
\newcommand{\ee}{\end{equation}}
\newtheorem{theorem}{Theorem}
\newtheorem{definition}{Definition}
\newtheorem{lemma}{Lemma}
\newtheorem{remark}{Remark}
\begin{document}
\title{On the $\mathbf{\rm\Psi}-$fractional integral and applications}
\author{J. Vanterler da C. Sousa$^1$}
\address{$^1$ Department of Applied Mathematics, Institute of Mathematics,
 Statistics and Scientific Computation, University of Campinas --
UNICAMP, rua S\'ergio Buarque de Holanda 651,
13083--859, Campinas SP, Brazil\newline
e-mail: {\itshape \texttt{ra160908@ime.unicamp.br, capelas@ime.unicamp.br }}}

\author{E. Capelas de Oliveira$^1$}

\begin{abstract}Motivated by the ${\rm \Psi}$-Riemann-Liouville $({\rm \Psi-RL})$ fractional derivative and by the ${\rm \Psi}$-Hilfer $({\rm \Psi-H})$ fractional derivative, we introduced a new fractional operator the so-called $\rm\Psi-$fractional integral. We present some important results by means of theorems and in particular, that the $\rm\Psi-$fractional integration operator is limited. In this sense, we discuss some examples, in particular, involving the Mittag-Leffler $({\rm M-L})$ function, of paramount importance in the solution of population growth problem, as approached. On the other hand, we realize a brief discussion on the uniqueness of nonlinear $\Psi$-fractional Volterra integral equation (${\rm VIE}$) using $\beta-$distance functions.

\vskip.5cm
\noindent
\emph{Keywords}:Fractional calculus, $\rm\Psi-$fractional integral, population growth model, fractional Volterra integral equation, $\beta-$distance.
\newline 
MSC 2010 subject classifications. 26A33; 36A08; 34A12; 34A40.
\end{abstract}
\maketitle

\section{Introduction} 

The fractional calculus, during the last years went through a great evolution \cite{RHM,AHMJ,SAM}. The reason for the advancement and expansion is due to the great number of functions that fractional differentiation operators and integration provided to the academic community \cite{ray,oldham,magin,mainardi}. One of the most well-known and important population growth models was introduced by Thomas Malthus in 1798, an English economist \cite{porco}. The Malthus model predicts that the population rate of a given region at time $t$, is proportional to the total population at that instant. It has been found that non-variable order fractional derivatives more accurately describe population growth \cite{almeida}. There are some others mathematical models related to population growth, we suggest \cite{crow,jumarie,almeida3}.

Solutions of the integral equations, in the Volterra or Fredholm sense, has gained prominence, especially in the fields of science and engineering \cite{IE10,IE11}. Also, the investigation of the theorems associated with existence and uniqueness of solutions of nonlinear integral equations has been shown to be important for several researchers \cite{IE,IE1,IE2,IE3}. On the other hand, to study the existence and stability of solutions of fractional integral equations, through integral operators such that: Riemann-Liouville, Hadamard and Katugampola, has been gaining prominence in both analytical sense and in the sense of functions \cite{IE4}. On the other hand, recent studies on $\beta-$distance functions, proved to be important in the study of the existence of solutions of integral equations and fractional differential equations \cite{wongyat,wongyat1,wongyat2}.

However, with the recent development of the fractional calculus, in particular, the introduction of non-variable integrals and fractional derivatives with respect to another function, among which we mention the fractional derivatives: ${\rm \Psi-RL}$, ${\rm \Psi}$-Caputo $({\rm \Psi-C})$ and ${\rm \Psi-H}$ \cite{RHM,AHMJ,JEM1,RCA}, motivated the mathematicians, in particular Almeida \cite{porco} among others \cite{ray,mainardi,JEM2,JEM3}, to propose new mathematical models, allowing a better description of the respective models. In order to propose a fractional differentiation operator and at the same time to unify a class of fractional operators, Sousa and Oliveira \cite{JEM1} introduced the ${\rm \Psi-H}$ fractional derivative. 

\textbf{Since the ${\rm \Psi-H}$ fractional derivative holds a large class of fractional derivatives and also the integer fractional operator in this paper, we have a greater motivation to use the fractional integration operators in relation to another function and the ${\rm \Psi-RL}$ and the ${\rm \Psi-H}$ fractional derivatives. First, it has been observed that the definition of ${\rm \Psi}-$fractional integrals different from other definitions of fractional integrals, since the ${\rm \Psi-RL}$ fractional derivative is used, in addition to contain some particular case \cite{xu}. Second, that from some interesting new results will be investigated, besides proposing applications involving ${\rm \Psi}-$fractional VIE.}

In this sense, considering the nonlinear ${\rm \Psi}$-fractional ${\rm VIE}$ by means of the operator $\mathbf{I}^{\mu,\nu;\rm\Psi}_{a+}(\cdot)$ Eq.(\ref{jose}) given by
\begin{equation}\label{cont98}
x\left( t\right) =\varphi \left( t\right) +\mathbf{I}_{a+}^{\mu ,\nu ;\rm\Psi } \left[ \mathfrak{W}\left( t,s,x\left( s\right) \right) \right] 
\end{equation}
where $0<\mu <1,$ $0\leq \nu \leq 1$, $a,b\in \mathbb{R}$ with $a<b,$ $\phi :\Delta \rightarrow \mathbb{R}$ and $\mathfrak{W}:\Delta ^{2}\times \mathbb{R}\rightarrow \mathbb{R}$ $(\Delta:=\left[ a,b\right])$ are two given function, we will study the uniqueness of solutions through $\beta-$distance functions and $\beta-$generalized $SS$-contraction function.

\textbf{This paper is organized as follows: In section 2, we first introduce the spaces of the continuous functions and functions weighted with their respective norms. In this sense, we present the definition of fractional integral with respect to another function and the ${\rm \Psi-RL}$ and the ${\rm \Psi-H}$ fractional derivative. We present some theorems and lemmas, however we will omit their respective statements. In section 3, we present the main result of the paper. Through the ${\rm \Psi-RL}$ fractional integral and the fractional derivative, we obtain the first principal result, the $\rm\Psi-$fractional integral, and its limitation in the space of weighted functions. In this sense, through the $\rm\Psi-$fractional integral, we present some theorems and important observations to justify the originality of the $\rm\Psi-$fractional integral.} In section 4, we present some examples and graphs: the first example addresses both the $\rm\Psi-$fractional integration operator, the calculation involving the function $ (\rm\Psi(t)-\rm\Psi(a))^{\delta-1}$. On the other hand, the fundamental example in the solution of fractional differential equations involves one parameter ${\rm M-L}$ function \textbf{and also, the calculation of $\rm\Psi-$fractional integral of the ${\rm M-L}$ function of one parameter.} In section 5, we study the uniqueness of solutions of the nonlinear ${\rm\Psi}$-fractional ${\rm VIE}$ by means of the function $\beta-$distance and $\beta-$generalized $\mathbf{SS}$-contraction function. Also, using the ${\rm \Psi-H}$ fractional derivative, we present and solve a fractional version of the Malthus model. Concluding remarks, close the paper.


\section{Preliminaries}

First, we introduce the spaces of the continuous functions and weighted functions, as well as their respective norms. We also present definition involving $\beta-$distance function and $\beta-$generalized $\mathbf{SS}$-contraction function. In this sense, we present ${\rm \Psi-H}$ fractional differentiation operator and fractional integration operator, both with respect to another function. Finally, we present some important theorems and lemmas for the development of the paper.

Let $\Delta=[a,b]$ $(0<a<b<\infty)$ be a finite interval on the half-axis $\mathbb{R}^{+}$ and $C\left(\Delta,\mathbb{R}\right)$ be the space of continuous functions on $\Delta$.

The space of continuous function $f$ on $\Delta$ with the norm is defined by 
\cite{AHMJ} 
\begin{equation*}
\left\Vert f\right\Vert _{C\left(\Delta,\mathbb{R}\right) }=\underset{x\in \Delta }{\max }\left\vert f\left( x\right) \right\vert.
\end{equation*}

The weighted spaces $C_{\xi,\rm\Psi}\left(\Delta,\mathbb{R}\right)$ and $C_{1- \xi,\rm\Psi}\left(\Delta,\mathbb{R}\right)$ of functions $f$ on $I$ are defined by 
\begin{equation*}
C_{\xi ;\rm\Psi }\left(\Delta,\mathbb{R}\right) =\left\{ f:\left( a,b\right] \rightarrow \mathbb{R};\text{ }\left( \rm\Psi (x) -\rm\Psi \left( a\right) \right) ^{\xi }f\left( x\right) \in C\left(\Delta,\mathbb{R}\right) \right\} ,\text{ }0\leq
\xi <1,
\end{equation*}
\begin{equation*}
C_{1-\xi ;\rm\Psi }\left(\Delta,\mathbb{R}\right) =\left\{ f:\left( a,b\right] \rightarrow \mathbb{R};\text{ }\left( \rm\Psi \left( x\right) -\rm\Psi \left( a\right) \right) ^{1-\xi }f\left( x\right) \in C\left(\Delta,\mathbb{R}\right)\right\} ,\text{ }0\leq \xi <1,
\end{equation*}
respectively, with the norm 
\begin{equation}  \label{space}
\left\Vert f\right\Vert _{C_{\xi ;\rm\Psi }\left(\Delta,\mathbb{R}\right)}=\left\Vert \left( \rm\Psi \left( x\right) -\rm\Psi \left( a\right) \right) ^{\xi}f\left( x\right) \right\Vert _{C\left(\Delta,\mathbb{R}\right)}=\underset{x\in \Delta }{\max }\left\vert \left( \rm\Psi \left( x\right) -\rm\Psi \left( a\right) \right) ^{\xi }f\left( x\right) \right\vert
\end{equation}
and 
\begin{equation}  \label{space1}
\left\Vert f\right\Vert _{C_{1-\xi ;\rm\Psi }\left(\Delta,\mathbb{R}\right) }=\left\Vert \left( \rm\Psi \left( x\right) -\rm\Psi \left( a\right) \right) ^{1-\xi}f\left( x\right) \right\Vert _{C\left(\Delta,\mathbb{R}\right)}=\underset{x\in \Delta  }{\max }\left\vert \left( \rm\Psi \left( x\right) -\rm\Psi \left( a\right) \right) ^{1-\xi }f\left( x\right) \right\vert,
\end{equation}
respectively.

The following Definition \ref{def21} and Definition \ref{def22} involve $\beta-$ distance and $\beta-$ generalized $\mathbf{SS}$-contraction, as well as Theorem \ref{teo22}, Theorem \ref{teo23} and Theorem \ref{teo24}, are of paramount importance in the study of the uniqueness of solutions for the nonlinear ${\rm \Psi}$-fractional ${\rm VIE}$, as studied in Section 5 (functions).

\begin{definition} \cite{wongyat,wongyat1} \label{def21} Consider the metric space $\left( \Lambda,\mathbf{d}\right)$. A function $\mathbf{d}:\Lambda\times \Lambda\rightarrow \Delta_{1}$ $(\Delta_{1}:=\left[ 0,\infty \right))$ is called a $\beta -$distance on $\Lambda$ if it satisfies the three conditions $\forall$ $x,y,z\in \Lambda$:
\begin{enumerate}
\item $\mathbf{d}\left( x,y\right) \leq \mathbf{d}\left( x,z\right) +\mathbf{d}\left( z,y\right)$;
\item $\mathbf{d}\left( x,\cdot \right) :\Lambda\rightarrow \Delta_{1} $ is lower semi continuous $\forall$ $x\in \Lambda$;
\item for each $\varepsilon >0$, $\exists\delta >0$ such that $\mathbf{d}\left( x,y\right) \leq \delta $ and $\mathbf{d}\left( x,z\right) \leq \delta \rightarrow \mathbf{d}\left( x,y\right) \leq \varepsilon$.
\end{enumerate}
\end{definition}

\begin{definition}\cite{wongyat}\label{def22} Let $\mathbf{d}$ be a $\beta -$distance on a metric space $\left( \Lambda,\mathbf{d}\right) $. A function $f:\Lambda\rightarrow \Lambda$ is said to be a $\beta -$generalized $\mathbf{SS}$-contraction function if
\begin{equation}\label{eq23}
\mathbf{h}\left( \mathbf{d}\left( fx,fy\right) \right) \leq \phi \left( \mathbf{d}\left( x,y\right) \right) 
\end{equation}
$\forall$ $x,y\in \Lambda$, where $\mathbf{h}:\Delta_{1}\rightarrow \Delta_{1} $ is a weak altering distance function and $\phi :\Delta_{1} \rightarrow \Delta_{1} $ is a right upper semi continuous function such that $\mathbf{h}\left( t\right) >\phi \left( t\right) $ $\forall$ $t>0$.
\end{definition}

\begin{theorem}{\rm \cite{wongyat}}\label{teo22} Let $\left( \Lambda,\mathbf{d}\right) $ be a complete metric space and $\mathbf{d}:\Lambda\times \Lambda\rightarrow \Delta_{1} $ be a $\beta -$distance of $\mathbf{d}$. Suppose that $f:\Lambda\rightarrow \Lambda$ is continuous function such that $\forall$ $x,y\in \Lambda$ 
\begin{equation}\label{eq24}
\mathbf{h}\left( \mathbf{d}\left( fx,fy\right) \right) \leq \mathbf{h} \left( \mathbf{d}\left( x,y\right) \right) -\phi \left( \mathbf{d}\left( x,y\right)\right) 
\end{equation}
where $\mathbf{h}:\Delta_{1} \rightarrow \Delta_{1} $ is an altering distance function and $\phi :\Delta_{1} \rightarrow \Delta_{1} $ is a continuous function with $\phi \left( t\right) =0$ if and only if $t=0.$ Then $f$ has a unique fixed point fixed point in $\Lambda$. Moreover, for each $x_{0}\in \Lambda$, the Picard iteration $\left\{ x_{n}\right\} $, which is defined by $x_{n}=f^{n}x_{0}$ $\forall$ $n\in \mathbb{N}$, converges to a unique fixed point of $f$.
\end{theorem}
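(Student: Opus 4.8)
The plan is to treat (\ref{eq24}) as a weak-contraction condition in the spirit of the Banach and Boyd--Wong fixed point theorems, adapted to the asymmetric setting of $\beta$-distances. Fix an arbitrary $x_0 \in \Lambda$ and form the Picard sequence $x_n = f^n x_0$, so that $x_{n+1} = f x_n$. If $x_{n_0} = x_{n_0+1}$ for some $n_0$, that point is already fixed and there is nothing to prove; so I would assume $x_n \neq x_{n+1}$ for every $n$. The three milestones are: (i) the scalar sequence $d_n := \mathbf{d}(x_n, x_{n+1})$ decreases to $0$; (ii) $\{x_n\}$ is Cauchy in the underlying metric, so by completeness it converges to some $x^{*} \in \Lambda$; and (iii) $x^{*}$ is fixed and is the unique fixed point.

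For (i) I would apply (\ref{eq24}) with $x = x_{n-1}$, $y = x_n$ to get
\begin{equation*}
\mathbf{h}(d_n) = \mathbf{h}\left( \mathbf{d}(f x_{n-1}, f x_n) \right) \leq \mathbf{h}(d_{n-1}) - \phi(d_{n-1}) \leq \mathbf{h}(d_{n-1}).
\end{equation*}
Because an altering distance function $\mathbf{h}$ is nondecreasing, this forces $d_n \leq d_{n-1}$, so $\{d_n\}$ is nonincreasing and bounded below, hence $d_n \to r \geq 0$. Letting $n \to \infty$ in the displayed inequality and using the continuity of $\mathbf{h}$ and $\phi$ yields $\mathbf{h}(r) \leq \mathbf{h}(r) - \phi(r)$, i.e. $\phi(r) \leq 0$; since $\phi \geq 0$ and $\phi(t)=0$ iff $t=0$, we conclude $r = 0$, that is $d_n \to 0$. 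The same argument run on $\mathbf{d}(x_{n+1}, x_n)$ handles the reverse orientation, which matters because a $\beta$-distance need not be symmetric.

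The crux of the argument is step (ii), the Cauchy property, and it is where the axioms of Definition \ref{def21} do the real work. I would argue by contradiction: if $\{x_n\}$ is not Cauchy in the metric, then condition (3) of Definition \ref{def21} (the separation axiom) lets me extract $\varepsilon > 0$ together with subsequences $\{x_{m_k}\}$, $\{x_{n_k}\}$, $m_k > n_k \geq k$, chosen minimally so that the $\beta$-distances satisfy $\mathbf{d}(x_{n_k}, x_{m_k}) \geq \varepsilon$ while $\mathbf{d}(x_{n_k}, x_{m_k-1}) < \varepsilon$. Chaining the triangle inequality (property (1)) with the already-established fact $d_n \to 0$ shows that $\mathbf{d}(x_{n_k}, x_{m_k})$ and its index-shifted companion $\mathbf{d}(x_{n_k+1}, x_{m_k+1})$ both converge to $\varepsilon$. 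Feeding $x = x_{n_k}$, $y = x_{m_k}$ into (\ref{eq24}) and passing to the limit with the continuity of $\mathbf{h}$ and $\phi$ gives $\mathbf{h}(\varepsilon) \leq \mathbf{h}(\varepsilon) - \phi(\varepsilon)$, whence $\phi(\varepsilon) = 0$ and $\varepsilon = 0$, a contradiction. The main obstacle I anticipate is precisely the bookkeeping here: because $\mathbf{d}$ is only a $\beta$-distance (asymmetric, with $\mathbf{d}(x,x)$ not a priori zero), every triangle-inequality estimate must respect the order of the arguments, and the passage from convergence of the $\beta$-distances to genuine metric-Cauchyness must be mediated by property (3) rather than carried out directly.

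Once $\{x_n\}$ is Cauchy, completeness gives $x_n \to x^{*}$. Continuity of $f$ then yields $x_{n+1} = f x_n \to f x^{*}$, and uniqueness of metric limits forces $f x^{*} = x^{*}$, so $x^{*}$ is a fixed point and the Picard iterates converge to it. For uniqueness, if $x^{*}$ and $y^{*}$ are both fixed, applying (\ref{eq24}) with $x = x^{*}$, $y = y^{*}$ gives $\mathbf{h}(\mathbf{d}(x^{*},y^{*})) \leq \mathbf{h}(\mathbf{d}(x^{*},y^{*})) - \phi(\mathbf{d}(x^{*},y^{*}))$, so $\phi(\mathbf{d}(x^{*},y^{*})) = 0$ and hence $\mathbf{d}(x^{*},y^{*})=0$; the same substitution $x=y=x^{*}$ shows $\mathbf{d}(x^{*},x^{*})=0$, and a final appeal to the separation property (3) of the $\beta$-distance upgrades these to $x^{*} = y^{*}$.
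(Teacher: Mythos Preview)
The paper does not actually prove this theorem: its entire proof reads ``See \cite{wongyat}.'' So there is no in-paper argument to compare your proposal against; the result is simply imported from the cited reference.

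Your sketch follows the standard template one would expect for a weak-contraction fixed point theorem with a $\beta$-distance (monotone decrease of $\mathbf{d}(x_n,x_{n+1})$, a subsequence-extraction contradiction for the Cauchy property, continuity of $f$ for existence, and the contraction inequality for uniqueness), and it is broadly in line with how such results are proved in the $w$-distance/$\beta$-distance literature. The one place to be careful is the Cauchy step: as you note, $\mathbf{d}$ is asymmetric and $\mathbf{d}(x,x)$ need not vanish, so the usual ``minimal $m_k$'' extraction and the passage from $\beta$-distance control to genuine metric-Cauchyness both require the lower-semicontinuity axiom (2) and the separation axiom (3) of Definition~\ref{def21} in a slightly more delicate way than you indicate. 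In practice one typically shows first that $\mathbf{d}(x_n,x_m)\to 0$ as $n,m\to\infty$ (using the triangle inequality and the decay of consecutive terms) and then invokes axiom (3) to conclude metric-Cauchyness, rather than running the contradiction directly at the metric level; your outline conflates these two layers a bit but the ingredients are all there.
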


\begin{proof}
See \cite{wongyat}.
\end{proof}

\begin{theorem}\label{teo23} Let $\left( \Lambda,\mathbf{d}\right) $ be a complete metric space, $\mathbf{h}\in \Omega $, $\mathbf{d}:\Lambda\times \Lambda\rightarrow \Delta_{1} $ be a $\beta -$distance on $X$ and altering distance on $\mathbf{d}$, and $f:\Lambda\rightarrow \Lambda$ be a function. Suppose that there exist $k_{1}\in \left[ 0,1\right) $ such that
\begin{equation}\label{eq25}
\mathbf{h}\left( \mathbf{d}\left( fx,fy\right) \right) \leq \left[ \mathbf{h}\left( \mathbf{d}\left( x,y\right) \right) \right] ^{k_{1}}
\end{equation}
$\forall$ $x,y\in X$. If you satisfy a condition 
\begin{enumerate}
\item $f:X\rightarrow X$ is a continuous function;
\item $\mathbf{d}$ and $\mathbf{h}$ are continuous functions, then $f$ has a unique fixed point.
\end{enumerate}

Moreover, for each $x_{0}\in \Lambda$, the Picard iteration $\left\{ x_{n}\right\} $, given by $x_{n}=f^{n}x_{0}$ $\forall$ $n\in \mathbb{N} $, converge to a unique fixed point of $f$.
\end{theorem}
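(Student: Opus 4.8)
The result is a power- (equivalently $\theta$-) contraction principle of Jleli--Samet type, but carried out with $\mathbf{d}$ a $\beta$-distance rather than a genuine metric; accordingly the plan is to run the usual Picard scheme while systematically replacing each use of the metric axioms by the $\beta$-distance properties of Definition~\ref{def21} and by the auxiliary $\beta$-distance lemmas. Fix $x_0\in\Lambda$ and set $x_n=f^nx_0$. If $x_{n_0}=x_{n_0+1}$ for some $n_0$ we are already done, so assume $a_n:=\mathbf{d}(x_n,x_{n+1})>0$ for every $n$. Applying \eqref{eq25} to consecutive iterates gives $\mathbf{h}(a_n)\le[\mathbf{h}(a_{n-1})]^{k_1}$, and iterating this bound down to $a_0$ produces
\begin{equation*}
1<\mathbf{h}(a_n)\le[\mathbf{h}(a_0)]^{k_1^n}.
\end{equation*}
Since $k_1\in[0,1)$ gives $k_1^n\to0$, the right-hand side tends to $1$; hence $\mathbf{h}(a_n)\to1$, and the defining limit property of $\mathbf{h}\in\Omega$, namely that $\mathbf{h}(t_n)\to1$ only when $t_n\to0^{+}$, forces $a_n=\mathbf{d}(x_n,x_{n+1})\to0$.

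Next I would upgrade decay to summability. Using the asymptotic growth condition of the class $\Omega$ near the origin (there exist $r\in(0,1)$ and $\ell\in(0,\infty]$ with $(\mathbf{h}(t)-1)/t^{r}\to\ell$ as $t\to0^{+}$) together with the displayed bound, a short comparison gives a geometric estimate $a_n\le C\rho^{\,n}$ for some $\rho\in[0,1)$, whence $\sum_n a_n<\infty$. For $m>n$ the triangle inequality of the $\beta$-distance (condition (1) of Definition~\ref{def21}) then yields
\begin{equation*}
\mathbf{d}(x_n,x_m)\le\sum_{k=n}^{m-1}\mathbf{d}(x_k,x_{k+1})\le\sum_{k\ge n}a_k,
\end{equation*}
and the tail on the right tends to $0$ as $n\to\infty$. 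Feeding this into the $\beta$-distance Cauchy lemma (one of the auxiliary lemmas, which converts such a $\beta$-estimate into Cauchyness of $\{x_n\}$ in the underlying metric) and invoking completeness of $\left(\Lambda,\mathbf{d}\right)$, we obtain $x_n\to x^{\ast}$ for some $x^{\ast}\in\Lambda$.

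It remains to identify $x^{\ast}$ as the unique fixed point. Continuity of $f$ (hypothesis (1)) gives $fx^{\ast}=\lim_n fx_n=\lim_n x_{n+1}=x^{\ast}$, so $x^{\ast}$ is fixed. For uniqueness, if $fy^{\ast}=y^{\ast}$ as well, then applying \eqref{eq25} to the pairs $(x^{\ast},y^{\ast})$ and $(x^{\ast},x^{\ast})$ yields $\mathbf{h}(c)\le[\mathbf{h}(c)]^{k_1}$ with $c=\mathbf{d}(x^{\ast},y^{\ast})$, and the analogous inequality for $\mathbf{d}(x^{\ast},x^{\ast})$; since a value strictly larger than $1$ cannot be dominated by a strictly smaller power of itself when $k_1<1$, both $\mathbf{d}(x^{\ast},y^{\ast})$ and $\mathbf{d}(x^{\ast},x^{\ast})$ must vanish. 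Condition (3) of Definition~\ref{def21}, which ties the $\beta$-distance $\mathbf{d}$ back to the underlying metric, then forces $x^{\ast}=y^{\ast}$; the same continuity and bound show that any Picard orbit converges to this common point.

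The main obstacle is precisely the passage from summability to a genuine Cauchy sequence: a $\beta$-distance is neither symmetric nor does $\mathbf{d}(x,y)=0$ by itself imply $x=y$, so the convergence of $\sum_n a_n$ has to be transferred to the metric through the auxiliary $\beta$-distance lemmas rather than directly, and the same asymmetry must be respected in the uniqueness step. The quantitative engine behind summability -- and the only place where the full strength of $\mathbf{h}\in\Omega$, as opposed to a bare altering-distance function, is actually needed -- is the near-origin growth condition $(\mathbf{h}(t)-1)/t^{r}\to\ell$, so I would isolate that estimate as the technical heart of the argument.
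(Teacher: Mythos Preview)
The paper does not actually prove this theorem: its entire proof is the single line ``See \cite{wongyat}.'' So there is no in-paper argument to compare your sketch against; the result is imported wholesale from the cited reference of Wongyat and Sintunavarat.

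Your outline is a reasonable reconstruction of a Jleli--Samet type $\theta$-contraction argument adapted to a $\beta$-distance, and the overall architecture (iterate \eqref{eq25}, extract $\mathbf{h}(a_n)\to 1$, upgrade to summability via the growth condition of the class $\Omega$, pass to a Cauchy sequence through the auxiliary $\beta$-distance lemmas, then use continuity for existence and the contractive inequality for uniqueness) is the standard route taken in that literature. Two cautions are worth flagging. First, the paper never defines the class $\Omega$, so your assumption that it is the Jleli--Samet class (values in $(1,\infty)$, $\mathbf{h}(t_n)\to 1$ iff $t_n\to 0^{+}$, and the near-origin growth condition) is an interpretation; it is the natural one given the power form of \eqref{eq25}, but strictly speaking you are filling in a gap in the statement, not in the proof. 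Second, the paper's Definition~\ref{def21}(3) is visibly garbled (the conclusion repeats one of the hypotheses), so the ``tie back to the underlying metric'' step in your uniqueness argument relies on the corrected version of that axiom from the original $w$-distance literature rather than on what is printed here.
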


\begin{proof}
See \cite{wongyat}.
\end{proof}

\begin{theorem}\label{teo24} Consider the complete metric space $\left( \Lambda,\mathbf{d}\right)$ and $q:\Lambda\times \Lambda\rightarrow \Delta_{1} $ be a $\beta-$distance on $\Lambda$ a distance of $\mathbf{d}$. Suppose $f:\Lambda\rightarrow \Lambda$ is a $\beta -$generalized $\mathbf{SS}$-contraction function such that $\mathbf{h}\left( 0\right) =0.$ Then $f$ has a unique fixed point on $\Lambda$. Moreover, for each $x_{0}\in \Lambda$, the Picard iteration $\left\{x_{n}\right\} $, which is defined by $x_{n}=f^{n}x_{0}$ $\forall$ $n\in \mathbb{N}$, converge to a unique fixed point of $f$.
\end{theorem}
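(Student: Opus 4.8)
The plan is to adapt the classical Picard-iteration argument for generalized contractions to the $\beta$-distance setting, exploiting the triangle inequality (condition (1)), the lower semicontinuity (condition (2)) and the comparison property (condition (3)) of $q$, together with the defining inequality $\mathbf{h}(t)>\phi(t)$ for $t>0$. Fix $x_{0}\in\Lambda$ and set $x_{n}=f^{n}x_{0}$, so that $x_{n+1}=fx_{n}$. First I would control the consecutive $\beta$-distances $c_{n}:=q(x_{n},x_{n+1})$. Applying the $\mathbf{SS}$-contraction inequality $\mathbf{h}(q(fx,fy))\le\phi(q(x,y))$ with $x=x_{n-1}$, $y=x_{n}$ gives $\mathbf{h}(c_{n})\le\phi(c_{n-1})<\mathbf{h}(c_{n-1})$ whenever $c_{n-1}>0$; since $\mathbf{h}$ is a weak altering distance function, hence nondecreasing, this forces $c_{n}<c_{n-1}$. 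Thus $\{c_{n}\}$ is nonincreasing and bounded below, so it converges to some $r\ge 0$. Passing to the limit in $\mathbf{h}(c_{n})\le\phi(c_{n-1})$ and using the right upper semicontinuity of $\phi$ (note $c_{n-1}\downarrow r$) together with $\mathbf{h}(r)>\phi(r)$ for $r>0$ yields a contradiction unless $r=0$; therefore $q(x_{n},x_{n+1})\to 0$.

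Second, I would show that $\{x_{n}\}$ is Cauchy, which is the crux of the argument. The natural route is to prove $\lim_{n\to\infty}\sup_{m>n}q(x_{n},x_{m})=0$ and then invoke condition (3) to upgrade this to a genuine $\mathbf{d}$-Cauchy condition: given $\varepsilon>0$, choose $\delta>0$ as in condition (3), pick $N$ so large that $q(x_{k},x_{l})\le\delta$ for all $l>k\ge N$, and deduce $\mathbf{d}(x_{n},x_{m})\le\varepsilon$. To establish $\sup_{m>n}q(x_{n},x_{m})\to 0$ I would argue by contradiction: if it failed there would be $\varepsilon_{0}>0$ and indices $m_{k}>n_{k}\ge k$ with $q(x_{n_{k}},x_{m_{k}})\ge\varepsilon_{0}$; choosing $m_{k}$ minimal and using the triangle inequality for $q$ with $c_{n}\to 0$, one shows $q(x_{n_{k}},x_{m_{k}})\to\varepsilon_{0}$ and $q(x_{n_{k}+1},x_{m_{k}+1})\to\varepsilon_{0}$. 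Feeding these into $\mathbf{h}(q(x_{n_{k}+1},x_{m_{k}+1}))\le\phi(q(x_{n_{k}},x_{m_{k}}))$ and taking limits, with $\phi$ right upper semicontinuous and $\mathbf{h}$ lower semicontinuous, produces $\mathbf{h}(\varepsilon_{0})\le\phi(\varepsilon_{0})$, contradicting $\mathbf{h}(\varepsilon_{0})>\phi(\varepsilon_{0})$.

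Third, by completeness of $(\Lambda,\mathbf{d})$ the Cauchy sequence $\{x_{n}\}$ converges to some $x^{*}\in\Lambda$. To identify $x^{*}$ as a fixed point without assuming continuity of $f$, I would first use the lower semicontinuity of $q(x_{n},\cdot)$ to get $q(x_{n},x^{*})\le\liminf_{m}q(x_{n},x_{m})\le\sup_{m>n}q(x_{n},x_{m})\to 0$, so $q(x_{n},x^{*})\to 0$. The contraction inequality then gives $\mathbf{h}(q(x_{n+1},fx^{*}))\le\phi(q(x_{n},x^{*}))$, whose right-hand side tends to $0$ (here $\phi(0)=0$ follows from $0\le\phi(0)\le\mathbf{h}(0)=0$); since $\mathbf{h}$ vanishes only at $0$ and is nondecreasing, $q(x_{n+1},fx^{*})\to 0$. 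Having both $q(x_{n},x^{*})\to 0$ and $q(x_{n},fx^{*})\to 0$, the standard $\beta$-distance separation lemma (an immediate consequence of condition (3): if $q(x_{n},y)\to 0$ and $q(x_{n},z)\to 0$ then $y=z$) gives $fx^{*}=x^{*}$.

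Finally, for uniqueness suppose $x^{*}$ and $y^{*}$ are fixed points. Applying the contraction to the pairs $(x^{*},x^{*})$, $(y^{*},y^{*})$ and $(x^{*},y^{*})$ and invoking $\mathbf{h}(t)>\phi(t)$ for $t>0$ forces $q(x^{*},x^{*})=q(x^{*},y^{*})=0$; condition (3) with $z=x=x^{*}$ and $y=y^{*}$ then yields $\mathbf{d}(x^{*},y^{*})\le\varepsilon$ for every $\varepsilon>0$, whence $x^{*}=y^{*}$. The convergence of the Picard iterates to this unique fixed point was already obtained in the third step. I expect the main obstacle to be the Cauchy estimate of the second paragraph: because the contraction is phrased through the possibly asymmetric, non-metric $\beta$-distance $q$, one must carefully combine its triangle inequality with the semicontinuity of $\mathbf{h}$ and $\phi$ to run the minimal-index subsequence argument, and only afterwards translate $q$-smallness into $\mathbf{d}$-smallness via condition (3).
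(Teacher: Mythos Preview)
The paper does not actually prove this theorem; its ``proof'' consists solely of the citation ``See \cite{wongyat}'', treating the result as a preliminary fixed-point tool imported from Wongyat and Sintunavarat. There is therefore nothing in the paper to compare your argument against.

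Your proposal supplies a full self-contained argument along the standard lines used in that literature for $w$-distance (here ``$\beta$-distance'') contractions: monotone decay of the consecutive $\beta$-distances, a minimal-index contradiction to obtain the Cauchy property, transfer from $q$-smallness to $\mathbf{d}$-smallness via condition~(3), identification of the limit as a fixed point through lower semicontinuity and the separation lemma, and uniqueness from $\mathbf{h}(t)>\phi(t)$. This is precisely the route taken in the cited source, so your strategy is the expected one. One technical point to make explicit: since $q$ need not be symmetric, the two-sided sandwich you use for $q(x_{n_{k}+1},x_{m_{k}+1})$ in the Cauchy step also requires $q(x_{n+1},x_{n})\to 0$, which you should establish separately by applying the $\mathbf{SS}$-contraction to the reversed pair $(x_{n},x_{n-1})$ before invoking the triangle inequality.
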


\begin{proof}
See \cite{wongyat}.
\end{proof}

\begin{definition}\cite{AHMJ,SAM} Let $\mu>0$, with $\Delta=[a,b]$ $(-\infty\leq a<x<b\leq \infty)$ a finite or infinite interval, $f$ an integrable function defined on $\Delta$ and $\rm\Psi\in C^{1}(\Delta,\mathbb{R})$ an increasing function such that $\rm\Psi^{\prime }(x)\neq 0$, $\forall x\in \Delta$. The fractional integrals of a function $f$ are given by 
\begin{equation}  \label{P1}
\mathcal{I}_{a+}^{\mu ;\rm\Psi }f\left( x\right) :=\frac{1}{\Gamma \left( \mu \right) }\int_{a}^{x}\mathbf{M}^{\mu}_{\rm\Psi}(x,t)f\left( t\right) dt
\end{equation}
and 
\begin{equation}  \label{P2}
\mathcal{I}_{b-}^{\mu ;\rm\Psi }f\left( x\right) :=\frac{1}{\Gamma \left( \mu \right) }\int_{x}^{b}\mathbf{M}^{\mu}_{\rm\Psi}(x,t)f\left( t\right) dt,
\end{equation}
\end{definition}
with $\mathbf{M}^{\mu}_{\rm\Psi}(x,t):=\rm\Psi ^{\prime }\left( t\right) \left( \rm\Psi \left(x\right) -\rm\Psi \left( t\right) \right) ^{\mu -1}$.

applying $\mu=1$ in Eq.(\ref{P1}) and Eq.(\ref{P2}) , we have 
\begin{equation}  \label{P3}
I_{a+}^{1 ;\rm\Psi }f\left( x\right)=\int_{a}^{x}\rm\Psi ^{\prime }\left( t\right) f\left( t\right) dt
\end{equation}
and 
\begin{equation}  \label{P4}
I_{b-}^{1 ;\rm\Psi }f\left(x\right)=\int_{x}^{b}\rm\Psi ^{\prime }\left( t\right) f\left( t\right) dt.
\end{equation}

\textbf{In the course of the paper we will use the ${\rm \Psi(\cdot)}$ function a lot. For a better development, everywhere that such a function is found, subtends already that it is increasing.}

In this sense, from Eq.(\ref{P1}) and Eq.(\ref{P2}), the Riemann-Liouville fractional derivatives of $f$ of order $\mu $, are defined by \cite{AHMJ,SAM} 
\begin{eqnarray}
\mathcal{D}_{a+}^{\mu ;\rm\Psi }f\left( x\right) &=&\left( \frac{1}{\rm\Psi
^{\prime }\left( x\right) }\frac{d}{dx}\right) \mathcal{I}_{a+}^{1-\mu ;\rm\Psi
}f\left( x\right)  \notag  \label{D2} \\
&=&\frac{1}{\Gamma \left( 1-\mu \right) }\left( \frac{1}{\rm\Psi ^{\prime
}\left( x\right) }\frac{d}{dx}\right) \int_{a}^{x}\mathbf{Q}^{\mu}_{\rm\Psi}(x,t) f\left( t\right) dt  \notag \\
&&
\end{eqnarray}%
and 
\begin{eqnarray}
\mathcal{D}_{b-}^{\mu ;\rm\Psi }f\left( x\right) &=&\left( -\frac{1}{\rm\Psi
^{\prime }\left( x\right) }\frac{d}{dx}\right) \mathcal{I}_{a+}^{1-\mu ;\rm\Psi
}f\left( x\right)  \notag  \label{D3} \\
&=&\frac{1}{\Gamma \left( n-\mu \right) }\left( -\frac{1}{\rm\Psi ^{\prime
}\left( x\right) }\frac{d}{dx}\right) \int_{x}^{b} \mathbf{Q}^{\mu}_{\rm\Psi}(x,t)  f\left( t\right) dt,  \notag \\
&&
\end{eqnarray}
with $\mathbf{Q}^{\mu}_{\rm\Psi}(x,t):= \rm\Psi ^{\prime }\left(
t\right) \left( \rm\Psi \left( t\right) -\rm\Psi \left( x\right) \right) ^{-\mu
}$.

\begin{remark}
From {\rm Eq.(\ref{P1})} and {\rm Eq.(\ref{P2})}, we also consider a fractional derivative version, the so-called ${\rm \Psi-C}$ fractional derivative, recently introduced by Almeida {\rm \cite{RCA}}.
\end{remark}

\begin{definition}{\rm \cite{JEM1}} Let $0<\mu <1$ and $f,\rm\Psi \in C^{1}(\Delta,\mathbb{R})$ such that $\rm\Psi ^{\prime }(x)\neq 0$, $\forall$ $x\in \Delta$. The ${\rm \Psi-H}$ fractional derivatives $(\mbox{left-sided and right-sided})$ $^{H}\mathscr{D}_{a+}^{\mu ,\nu ;\rm\Psi }(\cdot )$ and $^{H}\mathscr{D}_{b-}^{\mu ,\nu ;\rm\Psi }(\cdot )$ of a function of order $\mu $ and type $0\leq \nu \leq 1$, are defined by 
\begin{equation}
^{H}\mathscr{D}_{a+}^{\mu ,\nu ;\rm\Psi }f\left( x\right) =\mathcal{I}_{a+}^{\nu
\left( 1-\mu \right) ;\rm\Psi }\left( \frac{1}{\rm\Psi ^{\prime }\left(
x\right) }\frac{d}{dx}\right) \mathcal{I}_{a+}^{\left( 1-\nu \right) \left( 1-\mu
\right) ;\rm\Psi }f\left( x\right)  \label{HIL}
\end{equation}%
and 
\begin{equation}
^{H}\mathscr{D}_{b-}^{\mu ,\nu ;\rm\Psi }f\left( x\right) =\mathcal{I}_{b-}^{\nu
\left( 1-\mu \right) ;\rm\Psi }\left( -\frac{1}{\rm\Psi ^{\prime }\left(
x\right) }\frac{d}{dx}\right) \mathcal{I}_{b-}^{\left( 1-\nu \right) \left( 1-\mu
\right) ;\rm\Psi }f\left( x\right),  \label{HIL1}
\end{equation}
respectively.

The Eq.(\ref{HIL}) and Eq.(\ref{HIL1}), can be written as
\begin{equation}
^{H}\mathscr{D}_{a+}^{\mu ,\nu ;\rm\Psi }f\left( x\right) =\mathcal{I}_{a+}^{\xi
-\mu ;\rm\Psi }\mathcal{D}_{a+}^{\xi ;\rm\Psi }f\left( x\right)  \label{HIL2}
\end{equation}%
and 
\begin{equation}
^{H}\mathscr{D}_{b-}^{\mu ,\nu ;\rm\Psi }f\left( x\right) =\mathcal{I}_{b-}^{\xi
-\mu ;\rm\Psi }\left( -1\right) \mathcal{D}_{b-}^{\xi ;\rm\Psi }f\left(
x\right) ,  \label{HIL3}
\end{equation}%
respectively, with $\xi =\mu +\nu \left( 1-\mu \right) $ and $\mathcal{I}_{a+}^{\xi -\mu ;\rm\Psi }(\cdot )$, $\mathcal{D}_{a+}^{\xi ;\rm\Psi }(\cdot )$, $\mathcal{I}_{b-}^{\xi -\mu ;\rm\Psi }(\cdot )$, $\mathcal{D}_{b-}^{\xi ;\rm\Psi }(\cdot )$ as defined in {\rm Eq.(\ref{P1})}, {\rm Eq.(\ref{D2})}, {\rm Eq.(\ref%
{P2})} and {\rm Eq.(\ref{D3})}.
\end{definition}

\begin{lemma} \label{LE} Let $\mu>0$ and $\nu>0$. Then, we have 
\begin{equation*}
\mathcal{I}_{a+}^{\mu ;\rm\Psi }\mathcal{I}_{a+}^{\nu ;\rm\Psi }f\left( x\right) =\mathcal{I}_{a+}^{\mu +\nu ;\rm\Psi }f\left( x\right).
\end{equation*}
\end{lemma}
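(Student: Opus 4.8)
The plan is to unfold the two nested integrals via Eq.~(\ref{P1}), interchange the order of integration, and recognize the resulting inner integral as a Beta function; the exponent $\mu+\nu$ then appears automatically. I assume throughout that $f$ is integrable enough for the double integral to converge (for instance $f\in C_{1-\xi;\Psi}(\Delta,\R)$), which is what legitimizes the interchange. First I would write
\begin{equation*}
\mathcal{I}_{a+}^{\mu;\Psi}\mathcal{I}_{a+}^{\nu;\Psi}f(x)=\frac{1}{\Gamma(\mu)\Gamma(\nu)}\int_a^x\int_a^s\Psi'(s)(\Psi(x)-\Psi(s))^{\mu-1}\Psi'(t)(\Psi(s)-\Psi(t))^{\nu-1}f(t)\,dt\,ds,
\end{equation*}
whose domain is the triangle $\{(s,t):a\le t\le s\le x\}$. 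Since $\Psi$ is $C^1$ and increasing and the integrand is absolutely integrable on this compact region, Fubini's theorem lets me integrate first in $s$ over $[t,x]$, giving
\begin{equation*}
\mathcal{I}_{a+}^{\mu;\Psi}\mathcal{I}_{a+}^{\nu;\Psi}f(x)=\frac{1}{\Gamma(\mu)\Gamma(\nu)}\int_a^x\Psi'(t)f(t)\left[\int_t^x\Psi'(s)(\Psi(x)-\Psi(s))^{\mu-1}(\Psi(s)-\Psi(t))^{\nu-1}\,ds\right]dt.
\end{equation*}

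The key step is evaluating the inner integral. I would first substitute $u=\Psi(s)$, $du=\Psi'(s)\,ds$ (a valid change of variable because $\Psi'\neq0$), obtaining $\int_{\Psi(t)}^{\Psi(x)}(\Psi(x)-u)^{\mu-1}(u-\Psi(t))^{\nu-1}\,du$, and then set $u=\Psi(t)+\tau\,(\Psi(x)-\Psi(t))$ with $\tau\in[0,1]$. This normalizes the interval to $[0,1]$ and factors out the powers, so the inner integral becomes
\begin{equation*}
(\Psi(x)-\Psi(t))^{\mu+\nu-1}\int_0^1(1-\tau)^{\mu-1}\tau^{\nu-1}\,d\tau=(\Psi(x)-\Psi(t))^{\mu+\nu-1}\,\frac{\Gamma(\mu)\Gamma(\nu)}{\Gamma(\mu+\nu)},
\end{equation*}
where I have used the Beta-function identity $\int_0^1(1-\tau)^{\mu-1}\tau^{\nu-1}\,d\tau=\Gamma(\mu)\Gamma(\nu)/\Gamma(\mu+\nu)$.

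Substituting this back, the factor $\Gamma(\mu)\Gamma(\nu)$ cancels and only $1/\Gamma(\mu+\nu)$ survives, leaving
\begin{equation*}
\mathcal{I}_{a+}^{\mu;\Psi}\mathcal{I}_{a+}^{\nu;\Psi}f(x)=\frac{1}{\Gamma(\mu+\nu)}\int_a^x\Psi'(t)(\Psi(x)-\Psi(t))^{\mu+\nu-1}f(t)\,dt=\mathcal{I}_{a+}^{\mu+\nu;\Psi}f(x)
\end{equation*}
by Eq.~(\ref{P1}), which is the claimed identity. The main obstacle is not conceptual but a matter of rigor: justifying the interchange of the order of integration and the substitution $u=\Psi(s)$. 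Both rest on the standing hypotheses that $\Psi\in C^1(\Delta,\R)$ is increasing with $\Psi'\neq0$ and that $f$ is suitably integrable; granting these, the computation is a direct generalization of the classical semigroup proof for the Riemann--Liouville integral, recovered when $\Psi(x)=x$.
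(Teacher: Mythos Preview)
Your proof is correct and follows the standard route: expand the iterated integrals, swap the order via Fubini on the triangle $a\le t\le s\le x$, and reduce the inner $s$-integral to a Beta function after the substitution $u=\Psi(s)$. The paper does not actually prove this lemma but simply refers to \cite{AHMJ}; the argument there is precisely the one you wrote, so your approach matches the intended proof.
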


\begin{proof}
See {\rm \cite{AHMJ}}.
\end{proof}

\begin{lemma} \label{lema2} Let $\mu>0$ and $\delta>0$. Consider the function $f(x)= \left( \rm\Psi \left( x\right) -\rm\Psi \left( a\right) \right) ^{\delta -1}$, then 
\begin{equation*}
\mathcal{I}_{a+}^{\mu ;\rm\Psi }f(x)=\frac{\Gamma \left( \delta \right) }{\Gamma \left(
\mu +\delta \right) }\left( \rm\Psi \left( x\right) -\rm\Psi \left( a\right)
\right) ^{\mu +\delta -1}.
\end{equation*}
\end{lemma}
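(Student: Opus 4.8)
The plan is to reduce the defining convolution integral to an Euler Beta integral by a change of variables that exploits the monotonicity of $\rm\Psi$. Writing out the definition in Eq.(\ref{P1}) for the given $f$, the object to evaluate is
\begin{equation*}
\mathcal{I}_{a+}^{\mu ;\rm\Psi }f(x)=\frac{1}{\Gamma (\mu )}\int_{a}^{x}\rm\Psi ^{\prime }(t)\left( \rm\Psi (x)-\rm\Psi (t)\right) ^{\mu -1}\left( \rm\Psi (t)-\rm\Psi (a)\right) ^{\delta -1}\,dt .
\end{equation*}

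First I would substitute $u=\rm\Psi (t)$, which is legitimate because $\rm\Psi$ is increasing with $\rm\Psi ^{\prime }\neq 0$ on $\Delta$; this gives $du=\rm\Psi ^{\prime }(t)\,dt$, sends the limits to $\rm\Psi (a)$ and $\rm\Psi (x)$, and absorbs the factor $\rm\Psi ^{\prime }(t)$, leaving a standard Riemann--Liouville kernel in the variable $u$. Then I would rescale by setting $u=\rm\Psi (a)+(\rm\Psi (x)-\rm\Psi (a))s$ with $s\in[0,1]$, under which $\rm\Psi (x)-u$ and $u-\rm\Psi (a)$ each contribute a factor of $\rm\Psi (x)-\rm\Psi (a)$; pulling out the total power $(\mu-1)+(\delta-1)+1=\mu+\delta-1$ collapses everything to
\begin{equation*}
\mathcal{I}_{a+}^{\mu ;\rm\Psi }f(x)=\frac{\left( \rm\Psi (x)-\rm\Psi (a)\right) ^{\mu +\delta -1}}{\Gamma (\mu )}\int_{0}^{1}s^{\delta -1}(1-s)^{\mu -1}\,ds .
\end{equation*}

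The final step is to identify the remaining integral as the Beta function $B(\delta ,\mu )=\Gamma (\delta )\Gamma (\mu )/\Gamma (\mu +\delta )$; inserting this and cancelling $\Gamma (\mu )$ produces the stated formula. I do not expect a genuine obstacle here, but the subtle point that makes the hypotheses $\mu>0$ and $\delta>0$ indispensable is convergence at the endpoints: for $\delta<1$ the integrand is singular at the lower limit and for $\mu<1$ at the upper limit, and both singularities are integrable precisely when $\mu,\delta>0$, which is exactly the convergence condition for $B(\delta,\mu)$. I would therefore flag this integrability before invoking the Beta evaluation, so that treating the quantity as a convergent (improper) integral and carrying out the two substitutions is fully justified.
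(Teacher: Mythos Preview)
Your argument is correct and is exactly the standard derivation of this identity: the substitution $u=\rm\Psi(t)$ followed by the affine rescaling reduces the integral to $B(\delta,\mu)$, and the Beta--Gamma relation finishes it. The paper itself does not supply a proof for this lemma; it simply cites the reference \cite{AHMJ}, so there is nothing further to compare.
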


\begin{proof}
See {\rm \cite{AHMJ}}.
\end{proof}

\begin{lemma}\label{lema3} Given $\delta\in\mathbb{R}$, consider the function
$f\left( x\right) =\left( \rm\Psi \left( x\right) -\rm\Psi \left( a\right) \right) ^{\delta -1}$, where $\delta > 0$. Then, for $0<\mu<1$ and $0 \leq \nu\leq 1$,
\begin{equation*}
\text{ }^{H}\mathscr{D}_{a+}^{\mu ,\nu ;\rm\Psi }f\left( x\right) =\frac{\Gamma \left(
\delta \right) }{\Gamma \left( \delta -\mu \right) }\left( \rm\Psi \left(
x\right) -\rm\Psi \left( a\right) \right) ^{\delta -\mu -1}.
\end{equation*}
\end{lemma}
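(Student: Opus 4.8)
The plan is to reduce everything to Lemma \ref{lema2} by exploiting the factorization (\ref{HIL2}) of the $\rm\Psi$-Hilfer derivative as a composition $^{H}\mathscr{D}_{a+}^{\mu ,\nu ;\rm\Psi } = \mathcal{I}_{a+}^{\xi -\mu ;\rm\Psi }\mathcal{D}_{a+}^{\xi ;\rm\Psi }$ with $\xi = \mu + \nu(1-\mu)$. Since $0<\mu<1$ and $0\le\nu\le1$ force $\xi\in[\mu,1)$, every order appearing below is nonnegative, so Lemma \ref{lema2} is applicable at each stage. The whole argument is then a twofold application of Lemma \ref{lema2} glued together by one elementary differentiation.

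First I would compute the inner $\rm\Psi$-Riemann-Liouville derivative $\mathcal{D}_{a+}^{\xi ;\rm\Psi }f$. By definition (\ref{D2}) this equals $\left(\frac{1}{\rm\Psi'(x)}\frac{d}{dx}\right)\mathcal{I}_{a+}^{1-\xi ;\rm\Psi }f$. Applying Lemma \ref{lema2} with order $1-\xi$ gives
\begin{equation*}
\mathcal{I}_{a+}^{1-\xi ;\rm\Psi }f(x) = \frac{\Gamma(\delta)}{\Gamma(\delta-\xi+1)}\left(\rm\Psi(x)-\rm\Psi(a)\right)^{\delta-\xi}.
\end{equation*}
The operator $\frac{1}{\rm\Psi'(x)}\frac{d}{dx}$ is engineered precisely so that it acts on powers of $\rm\Psi(x)-\rm\Psi(a)$ the way an ordinary derivative acts on a monomial: by the chain rule $\frac{d}{dx}(\rm\Psi(x)-\rm\Psi(a))^{\delta-\xi} = (\delta-\xi)\rm\Psi'(x)(\rm\Psi(x)-\rm\Psi(a))^{\delta-\xi-1}$, and dividing by $\rm\Psi'(x)$ cancels the Jacobian factor. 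Using $\Gamma(\delta-\xi+1)=(\delta-\xi)\Gamma(\delta-\xi)$, the factor $\delta-\xi$ telescopes and I would obtain
\begin{equation*}
\mathcal{D}_{a+}^{\xi ;\rm\Psi }f(x) = \frac{\Gamma(\delta)}{\Gamma(\delta-\xi)}\left(\rm\Psi(x)-\rm\Psi(a)\right)^{\delta-\xi-1}.
\end{equation*}

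The key observation is that this output is again a function of the same type, namely $(\rm\Psi(x)-\rm\Psi(a))^{\delta'-1}$ with $\delta'=\delta-\xi$, up to the constant $\Gamma(\delta)/\Gamma(\delta-\xi)$. Hence I can invoke Lemma \ref{lema2} a second time, now with integration order $\xi-\mu$ and exponent parameter $\delta-\xi$:
\begin{equation*}
\mathcal{I}_{a+}^{\xi-\mu ;\rm\Psi }\left(\rm\Psi(x)-\rm\Psi(a)\right)^{\delta-\xi-1} = \frac{\Gamma(\delta-\xi)}{\Gamma(\delta-\mu)}\left(\rm\Psi(x)-\rm\Psi(a)\right)^{\delta-\mu-1}.
\end{equation*}
Composing the two steps, the $\Gamma(\delta-\xi)$ produced by the derivative cancels the $\Gamma(\delta-\xi)$ appearing in the numerator of the integral, and $\xi$ disappears from the final exponent, leaving exactly $\frac{\Gamma(\delta)}{\Gamma(\delta-\mu)}(\rm\Psi(x)-\rm\Psi(a))^{\delta-\mu-1}$, as claimed.

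I do not expect a genuine obstacle; the difficulty is purely bookkeeping. The two points requiring care are verifying that $\xi-\mu = \nu(1-\mu)\ge 0$ so that $\mathcal{I}_{a+}^{\xi-\mu ;\rm\Psi }$ is a bona fide fractional integral — with the degenerate case $\nu=0$, where $\xi=\mu$ and the outer integral collapses to the identity, handled by reading $\mathcal{I}^{0}$ as the identity operator — and tracking the Gamma-function shifts so that the telescoping $\Gamma(\delta-\xi+1)=(\delta-\xi)\Gamma(\delta-\xi)$ and the final cancellation are carried out correctly.
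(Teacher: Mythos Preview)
Your argument is correct and is exactly the standard computation one expects: use the factorization \eqref{HIL2}, apply Lemma~\ref{lema2} to evaluate $\mathcal{I}_{a+}^{1-\xi;\rm\Psi}f$, differentiate via the $\rm\Psi$-chain rule, and apply Lemma~\ref{lema2} once more with order $\xi-\mu$; the intermediate $\Gamma(\delta-\xi)$ cancels and $\xi$ drops out. The paper itself does not supply a proof --- it simply refers the reader to \cite{JEM1} --- so there is nothing to compare against beyond noting that your approach is the natural one and is what one finds in that reference. One small caveat you did not flag: the second invocation of Lemma~\ref{lema2} with exponent parameter $\delta-\xi$ formally requires $\delta>\xi$, which is not guaranteed by the bare hypothesis $\delta>0$; this is routinely glossed over in the literature (and the identity extends by the usual Gamma-function conventions), but it is worth a parenthetical remark if you want the argument to be fully self-contained.
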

\begin{proof}
See {\rm \cite{JEM1}}.
\end{proof}

\begin{theorem}\label{teo1} If $f\in C^{1}(\Delta, \mathbb{R})$, $0<\mu<1$ and $0\leq \nu \leq 1$, then
\begin{equation}\label{K2}
\mathcal{I}_{a+}^{\mu ;\rm\Psi }\text{ }^{H}\mathscr{D}_{a+}^{\mu ,\nu ;\rm\Psi
}f\left( x\right) =f\left( x\right) -\Theta^{\xi}(x,a) \mathcal{I}_{a+}^{1-\xi ;\rm\Psi }f\left( a\right) ,
\end{equation}
with $\xi=\mu+\nu(1-\mu)$ and $\Theta^{\xi}(x,a):=\dfrac{\left( \rm\Psi \left( x\right) -\rm\Psi
\left( a\right) \right) ^{\xi -1}}{\Gamma \left( \xi \right) }$.
\end{theorem}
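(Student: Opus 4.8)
The plan is to collapse the composite operator onto a single $\Psi$--Riemann--Liouville pair and then to establish one integration-by-parts identity. Using the representation \eqref{HIL2}, namely ${}^{H}\mathscr{D}_{a+}^{\mu,\nu;\Psi}f=\mathcal{I}_{a+}^{\xi-\mu;\Psi}\mathcal{D}_{a+}^{\xi;\Psi}f$ with $\xi=\mu+\nu(1-\mu)$, I would apply $\mathcal{I}_{a+}^{\mu;\Psi}$ to both sides. Since $\mu>0$ and $\xi-\mu=\nu(1-\mu)\ge 0$, the semigroup property of Lemma~\ref{LE} (when $\nu=0$ one simply has $\xi=\mu$ and $\mathcal{I}_{a+}^{0;\Psi}=\mathrm{Id}$, so no composition is needed) gives
\begin{equation*}
\mathcal{I}_{a+}^{\mu;\Psi}\,{}^{H}\mathscr{D}_{a+}^{\mu,\nu;\Psi}f(x)=\mathcal{I}_{a+}^{\mu;\Psi}\mathcal{I}_{a+}^{\xi-\mu;\Psi}\mathcal{D}_{a+}^{\xi;\Psi}f(x)=\mathcal{I}_{a+}^{\xi;\Psi}\mathcal{D}_{a+}^{\xi;\Psi}f(x).
\end{equation*}
Thus the whole statement reduces to the single inversion identity $\mathcal{I}_{a+}^{\xi;\Psi}\mathcal{D}_{a+}^{\xi;\Psi}f(x)=f(x)-\Theta^{\xi}(x,a)\,\mathcal{I}_{a+}^{1-\xi;\Psi}f(a)$ for the $\Psi$--RL operators of equal order $\xi\in(0,1)$.

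To handle this I would unfold $\mathcal{D}_{a+}^{\xi;\Psi}f=\bigl(\tfrac{1}{\Psi'}\tfrac{d}{dx}\bigr)g$ with $g:=\mathcal{I}_{a+}^{1-\xi;\Psi}f$, and prove the auxiliary commutation rule
\begin{equation*}
\mathcal{I}_{a+}^{\xi;\Psi}\Bigl(\tfrac{1}{\Psi'}\tfrac{d}{dx}g\Bigr)(x)=\Bigl(\tfrac{1}{\Psi'}\tfrac{d}{dx}\Bigr)\mathcal{I}_{a+}^{\xi;\Psi}g(x)-\Theta^{\xi}(x,a)\,g(a).
\end{equation*}
Granting this, setting $g=\mathcal{I}_{a+}^{1-\xi;\Psi}f$ and invoking Lemma~\ref{LE} gives $\mathcal{I}_{a+}^{\xi;\Psi}g=\mathcal{I}_{a+}^{\xi;\Psi}\mathcal{I}_{a+}^{1-\xi;\Psi}f=\mathcal{I}_{a+}^{1;\Psi}f$; then by \eqref{P3} together with $\tfrac{1}{\Psi'(x)}\tfrac{d}{dx}\int_a^x\Psi'(t)f(t)\,dt=f(x)$ the first term on the right is exactly $f(x)$, while $g(a)=\mathcal{I}_{a+}^{1-\xi;\Psi}f(a)$ supplies the boundary term $\Theta^{\xi}(x,a)\,\mathcal{I}_{a+}^{1-\xi;\Psi}f(a)$. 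This is the structural heart of the argument.

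To prove the commutation rule I would compute $\mathcal{I}_{a+}^{\xi;\Psi}g$ from the definition \eqref{P1} and integrate by parts once, using $\tfrac{d}{dt}\bigl[-\tfrac1\xi(\Psi(x)-\Psi(t))^{\xi}\bigr]=\Psi'(t)(\Psi(x)-\Psi(t))^{\xi-1}$. This raises the kernel exponent from $\xi-1$ to $\xi>0$, so the boundary contribution at $t=x$ vanishes and only $\tfrac{(\Psi(x)-\Psi(a))^{\xi}}{\Gamma(\xi+1)}g(a)$ survives at $t=a$. Applying $\tfrac{1}{\Psi'(x)}\tfrac{d}{dx}$ to this now-regular representation, differentiating the boundary term and the integral by the Leibniz rule, and recognising $\mathcal{I}_{a+}^{\xi;\Psi}\bigl(\tfrac1{\Psi'}g'\bigr)$ inside the remaining integral, yields the commutation rule after dividing through by $\Psi'(x)$.

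The main obstacle I anticipate is precisely this integration by parts: the kernel $(\Psi(x)-\Psi(t))^{\xi-1}$ is singular at $t=x$ for $\xi<1$, so one cannot integrate by parts naively against $g'$, since the boundary term at $t=x$ would formally read $0^{\xi-1}$. The remedy is the order chosen above — integrate by parts on $\mathcal{I}_{a+}^{\xi;\Psi}g$ \emph{first}, raising the exponent to $\xi$, and differentiate only afterwards. This requires $g=\mathcal{I}_{a+}^{1-\xi;\Psi}f$ to be continuously differentiable up to $a$, which is exactly where the hypothesis $f\in C^{1}(\Delta,\mathbb{R})$ enters and must be justified; alternatively, the change of variables $u=\Psi(t)$ turns all operators into classical Riemann--Liouville operators in $u$, reducing the identity to the standard inversion theorem where this regularity is routine.
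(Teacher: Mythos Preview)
The paper does not actually prove this statement: its entire proof reads ``See \cite{JEM1}'', deferring to the original $\Psi$--Hilfer reference. So there is no in-paper argument to compare against.

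Your approach is correct and is, in fact, the standard one used in the cited literature. Reducing via \eqref{HIL2} and Lemma~\ref{LE} to the pure $\Psi$--Riemann--Liouville identity $\mathcal{I}_{a+}^{\xi;\Psi}\mathcal{D}_{a+}^{\xi;\Psi}f(x)=f(x)-\Theta^{\xi}(x,a)\,\mathcal{I}_{a+}^{1-\xi;\Psi}f(a)$ is exactly how this is done, and your integration-by-parts commutation rule for $\mathcal{I}_{a+}^{\xi;\Psi}$ against $\tfrac{1}{\Psi'}\tfrac{d}{dx}$ is the right mechanism. You also correctly flag the only genuine subtlety---the singularity of the kernel at $t=x$ and the need to raise the exponent before differentiating---and your remark that the substitution $u=\Psi(t)$ collapses everything to the classical Riemann--Liouville case is a clean way to dispatch the regularity question. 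One small point worth tightening: the quantity $g(a)=\mathcal{I}_{a+}^{1-\xi;\Psi}f(a)$ is to be read as the limit of $\mathcal{I}_{a+}^{1-\xi;\Psi}f(x)$ as $x\to a^{+}$, and you should check this limit exists (it does for $f\in C^{1}$, indeed for $f$ merely continuous, since $1-\xi>0$); the paper itself writes $\mathcal{I}_{a+}^{1-\xi;\Psi}f(a)$ without comment, so you are already matching its level of rigor.
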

\begin{proof}
See {\rm \cite{JEM1}}.
\end{proof}

\begin{theorem}\label{teorema2} Let $f\in C^{1}(\Delta,\mathbb{R})$, $0<\mu<1$ and $0\leq\nu\leq 1$, we have
\begin{equation*}
^{H}\mathscr{D}_{a+}^{\mu ,\nu ;\rm\Psi }\mathcal{I}_{a+}^{\mu ;\rm\Psi }f\left( x\right)
=f\left( x\right).
\end{equation*}
\end{theorem}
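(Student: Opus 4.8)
The plan is to collapse the composition into a single $\rm\Psi$-RL integral acting on a single $\rm\Psi$-RL derivative, and then to read off the result from the inversion relation already recorded in Theorem \ref{teo1}. First I would replace the $\rm\Psi$-H derivative by its equivalent form \eqref{HIL2}, so that
\begin{equation*}
{}^{H}\mathscr{D}_{a+}^{\mu ,\nu ;\rm\Psi }\mathcal{I}_{a+}^{\mu ;\rm\Psi }f(x)=\mathcal{I}_{a+}^{\xi -\mu ;\rm\Psi }\mathcal{D}_{a+}^{\xi ;\rm\Psi }\mathcal{I}_{a+}^{\mu ;\rm\Psi }f(x),\qquad \xi =\mu +\nu (1-\mu ).
\end{equation*}
Everything then hinges on simplifying the inner block $\mathcal{D}_{a+}^{\xi ;\rm\Psi }\mathcal{I}_{a+}^{\mu ;\rm\Psi }f$.

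To do this I would expand $\mathcal{D}_{a+}^{\xi ;\rm\Psi }$ through its defining formula \eqref{D2} as $\left( \frac{1}{\rm\Psi ^{\prime }\left( x\right) }\frac{d}{dx}\right) \mathcal{I}_{a+}^{1-\xi ;\rm\Psi }$, and use the semigroup property of the $\rm\Psi$-RL integral (Lemma \ref{LE}) to merge $\mathcal{I}_{a+}^{1-\xi ;\rm\Psi }\mathcal{I}_{a+}^{\mu ;\rm\Psi }=\mathcal{I}_{a+}^{1-\xi +\mu ;\rm\Psi }$. Since $1-\xi +\mu =1-\nu (1-\mu )$, putting $\gamma :=\nu (1-\mu )=\xi -\mu$ turns the expression into $\left( \frac{1}{\rm\Psi ^{\prime }\left( x\right) }\frac{d}{dx}\right) \mathcal{I}_{a+}^{1-\gamma ;\rm\Psi }f$, which is precisely $\mathcal{D}_{a+}^{\gamma ;\rm\Psi }f$ by \eqref{D2}. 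Hence the target collapses to
\begin{equation*}
{}^{H}\mathscr{D}_{a+}^{\mu ,\nu ;\rm\Psi }\mathcal{I}_{a+}^{\mu ;\rm\Psi }f(x)=\mathcal{I}_{a+}^{\gamma ;\rm\Psi }\mathcal{D}_{a+}^{\gamma ;\rm\Psi }f(x).
\end{equation*}

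To finish I would invoke Theorem \ref{teo1} in the case $\nu =0$ (there $^{H}\mathscr{D}$ reduces to the $\rm\Psi$-RL derivative) but with order $\gamma$, which gives $\mathcal{I}_{a+}^{\gamma ;\rm\Psi }\mathcal{D}_{a+}^{\gamma ;\rm\Psi }f(x)=f(x)-\Theta ^{\gamma }(x,a)\mathcal{I}_{a+}^{1-\gamma ;\rm\Psi }f(a)$. The hard part, and the only genuinely analytic step, will be verifying that the boundary term vanishes: because $f\in C^{1}(\Delta ,\mathbb{R})$ is bounded and $1-\gamma >0$, the estimate $|\mathcal{I}_{a+}^{1-\gamma ;\rm\Psi }f(x)|\leq \frac{\|f\|_{\infty }}{\Gamma (2-\gamma )}\left( \rm\Psi \left( x\right) -\rm\Psi \left( a\right) \right) ^{1-\gamma }$ (obtained via the substitution $u=\rm\Psi \left( t\right)$) tends to $0$ as $x\to a^{+}$, so that $\mathcal{I}_{a+}^{1-\gamma ;\rm\Psi }f(a)=0$ and the remainder drops out, leaving $f(x)$. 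Finally I would dispose of the degenerate values of $\nu$: for $\nu =0$ one has $\gamma =0$ and the chain reduces directly to $\mathcal{D}_{a+}^{\mu ;\rm\Psi }\mathcal{I}_{a+}^{\mu ;\rm\Psi }f=\left( \frac{1}{\rm\Psi ^{\prime }\left( x\right) }\frac{d}{dx}\right) \mathcal{I}_{a+}^{1 ;\rm\Psi }f=f$ by \eqref{P3}; for $\nu =1$ the factor $\mathcal{I}_{a+}^{1-\xi ;\rm\Psi }=\mathcal{I}_{a+}^{0 ;\rm\Psi }$ is the identity, so Lemma \ref{LE} is applied trivially, while $\gamma =1-\mu \in (0,1)$ keeps the boundary estimate valid.
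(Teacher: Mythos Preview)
The paper does not give its own argument here: its entire proof is ``See \cite{JEM1}''. Your proposal is a correct self-contained derivation that relies only on ingredients already quoted in the paper (the representation \eqref{HIL2}, the semigroup Lemma~\ref{LE}, the definition \eqref{D2}, and the inversion formula of Theorem~\ref{teo1} specialized to $\nu=0$), together with an elementary estimate showing $\mathcal{I}_{a+}^{1-\gamma;\rm\Psi}f(a)=0$ for bounded $f$. In effect you supply what the paper leaves to citation; the route via $\mathcal{I}_{a+}^{\gamma;\rm\Psi}\mathcal{D}_{a+}^{\gamma;\rm\Psi}f$ with $\gamma=\nu(1-\mu)$ is the standard one, and your treatment of the degenerate endpoints $\nu=0,1$ is also fine.
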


\begin{proof}
See {\rm \cite{JEM1}}.
\end{proof}

\begin{theorem} The ${\rm \Psi-H}$ fractional derivative are bounded operators $\forall$ $0<\mu<1$ and $0 \leq\nu \leq 1$, given by
\begin{equation}\label{L1}
\left\Vert ^{H}\mathscr{D}_{a+}^{\mu ,\nu ;\rm\Psi }f\right\Vert _{C_{\xi ;\rm\Psi}\left(\Delta,\mathbb{R}\right)}\leq K\left\Vert f\right\Vert _{C_{\xi ;\rm\Psi}^{1}\left(\Delta,\mathbb{R}\right)}
\end{equation}
with $K=\dfrac{\left( \rm\Psi \left( b\right) -\rm\Psi \left( a\right) \right)
^{1-\mu }}{\Gamma \left( 2-\xi \right) \Gamma \left( \xi -\mu+1 \right) }$.
\end{theorem}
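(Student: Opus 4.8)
The plan is to collapse the two nested $\rm\Psi$-RL integrals hidden in $^{H}\mathscr{D}_{a+}^{\mu,\nu;\rm\Psi}$ onto the single object $\left(\frac{1}{\rm\Psi'(x)}\frac{d}{dx}\right)f$, which is exactly the quantity controlled by the $C^{1}_{\xi;\rm\Psi}$-norm on the right of \eqref{L1}, and then to read the constant $K$ off two applications of Lemma \ref{lema2}.

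First I would use the representation Eq.(\ref{HIL2}), $^{H}\mathscr{D}_{a+}^{\mu,\nu;\rm\Psi}f=\mathcal{I}_{a+}^{\xi-\mu;\rm\Psi}\mathcal{D}_{a+}^{\xi;\rm\Psi}f$ with $\xi=\mu+\nu(1-\mu)$, and rewrite the inner $\rm\Psi$-RL derivative in Caputo form. Since $f\in C^{1}(\Delta,\mathbb{R})$, one integration by parts inside $\mathcal{I}_{a+}^{1-\xi;\rm\Psi}f$ transfers the derivative onto $f$ and yields
\begin{equation*}
\mathcal{D}_{a+}^{\xi;\rm\Psi}f(x)=\mathcal{I}_{a+}^{1-\xi;\rm\Psi}\left(\frac{1}{\rm\Psi'(x)}\frac{d}{dx}\right)f(x)+\frac{\left(\rm\Psi(x)-\rm\Psi(a)\right)^{-\xi}}{\Gamma(1-\xi)}\,f(a).
\end{equation*}
Substituting this into Eq.(\ref{HIL2}) and using the semigroup law of Lemma \ref{LE} on the principal term (while the residual term is a pure power, handled by Lemma \ref{lema2} with $\delta=1-\xi$) exhibits $^{H}\mathscr{D}_{a+}^{\mu,\nu;\rm\Psi}f$ as $\mathcal{I}_{a+}^{1-\mu;\rm\Psi}\left(\frac{1}{\rm\Psi'(x)}\frac{d}{dx}\right)f$ plus a contribution proportional to $f(a)$; both are $\rm\Psi$-RL integrals to which the weighted norm Eq.(\ref{space}) applies directly.

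The core of the estimate is to factor $\mathcal{I}_{a+}^{1-\mu;\rm\Psi}=\mathcal{I}_{a+}^{\xi-\mu;\rm\Psi}\mathcal{I}_{a+}^{1-\xi;\rm\Psi}$ (Lemma \ref{LE}, since $(\xi-\mu)+(1-\xi)=1-\mu$) and to estimate the two factors one at a time. Writing $M$ for $\left\Vert f\right\Vert_{C^{1}_{\xi;\rm\Psi}(\Delta,\mathbb{R})}$, which dominates $\left(\frac{1}{\rm\Psi'(t)}\frac{d}{dt}\right)f$ over $\Delta$, I would first bound the inner integral by its maximum on $\Delta$ through Lemma \ref{lema2} with $\delta=1$, namely $\mathcal{I}_{a+}^{1-\xi;\rm\Psi}[1](t)=\frac{\left(\rm\Psi(t)-\rm\Psi(a)\right)^{1-\xi}}{\Gamma(2-\xi)}\le\frac{\left(\rm\Psi(b)-\rm\Psi(a)\right)^{1-\xi}}{\Gamma(2-\xi)}$, turning the inner factor into a constant; feeding that constant into the outer integral, again by Lemma \ref{lema2} with $\delta=1$, produces the factor $\frac{\left(\rm\Psi(b)-\rm\Psi(a)\right)^{\xi-\mu}}{\Gamma(\xi-\mu+1)}$. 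The two constants multiply to
\begin{equation*}
\frac{\left(\rm\Psi(b)-\rm\Psi(a)\right)^{(1-\xi)+(\xi-\mu)}}{\Gamma(2-\xi)\,\Gamma(\xi-\mu+1)}=\frac{\left(\rm\Psi(b)-\rm\Psi(a)\right)^{1-\mu}}{\Gamma(2-\xi)\,\Gamma(\xi-\mu+1)}=K,
\end{equation*}
and passing to the maximum over $x\in\Delta$ in Eq.(\ref{space}) gives Eq.(\ref{L1}).

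The step I expect to be the main obstacle is the order of estimation together with the weight $\left(\rm\Psi(x)-\rm\Psi(a)\right)^{\xi}$: one must bound the inner integral by a constant \emph{before} applying the outer one, because recombining the two orders through Lemma \ref{lema2} collapses $\Gamma(2-\xi)\,\Gamma(\xi-\mu+1)$ into a single $\Gamma(2-\mu)$ and changes the constant. Verifying that this separate estimation, the weight carried by the norm Eq.(\ref{space}), and the residual $f(a)$-term all remain under the stated $K$ is the delicate part; once the ordering is fixed the remainder reduces to the two routine power-law evaluations furnished by Lemma \ref{lema2}.
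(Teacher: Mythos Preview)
The paper does not actually prove this theorem: its ``proof'' is the single line ``See \cite{JEM1}'', i.e.\ a citation to the companion paper where the $\rm\Psi$--Hilfer derivative is introduced. There is therefore no proof in the present paper to compare your argument against.

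That said, your outline is the natural one and matches the structure one expects from \cite{JEM1}: pass from the Hilfer form to $\mathcal{I}_{a+}^{\xi-\mu;\rm\Psi}\mathcal{D}_{a+}^{\xi;\rm\Psi}f$, replace the $\rm\Psi$--RL derivative by its Caputo counterpart on $C^{1}$ functions, and then estimate the resulting composition $\mathcal{I}_{a+}^{\xi-\mu;\rm\Psi}\mathcal{I}_{a+}^{1-\xi;\rm\Psi}$ in two separate steps so that the constants $\Gamma(2-\xi)^{-1}$ and $\Gamma(\xi-\mu+1)^{-1}$ appear as a product rather than collapsing into $\Gamma(2-\mu)^{-1}$. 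You have correctly identified this ordering as the point that forces the specific constant $K$. The one place where your sketch is genuinely incomplete is the $f(a)$ residual: after applying $\mathcal{I}_{a+}^{\xi-\mu;\rm\Psi}$ to the boundary term you obtain $\dfrac{(\rm\Psi(x)-\rm\Psi(a))^{-\mu}}{\Gamma(1-\mu)}f(a)$, whose weighted norm is $\dfrac{(\rm\Psi(b)-\rm\Psi(a))^{\xi-\mu}}{\Gamma(1-\mu)}|f(a)|$, and absorbing this into $K\Vert f\Vert_{C^{1}_{\xi;\rm\Psi}}$ requires knowing precisely how the $C^{1}_{\xi;\rm\Psi}$-norm in \cite{JEM1} controls the pointwise value $f(a)$; that definition is not reproduced here, so you would need to import it to close the argument.
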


\begin{proof}
See {\rm \cite{JEM1}}.
\end{proof}


\section{New fractional operator and their properties}

Using the Eq.(\ref{P3}) and the ${\rm \Psi-RL}$ fractional derivative, we define a new operator of fractional integration so called $\rm\Psi-$fractional integral. In this sense, we have considered some observations related to the new fractional integral and ${\rm \Psi-H}$, some of which related to integral and fractional derivative recently introduced by Xu and Agrawal \cite{xu}. \textbf{We present conditions for obtaining the integral operator and the identity operator across the bounds of $\mu\rightarrow 0 $ and $\nu\rightarrow 1 $. We investigate some basic properties for the $\rm\Psi-$fractional integral, called: linearity, limitation, calculations of $\mathbf{I}^{\mu,\nu;\rm\Psi}_{a+}\text{ } {^{H}\mathscr{D}_{a+}^{\mu ,\nu ;\rm\Psi }} (\cdot)$ and ${^{H}\mathscr{D}_{a+}^{\mu ,\nu ;\rm\Psi }} \text{ }\mathbf{I}^{\mu,\nu;\rm\Psi}_{a+}(\cdot)$, as well as other interesting results obtained through the integral $\rm\Psi-$fractional integral.}

First, we present a new type of $\rm\Psi-$fractional integral operator
\begin{definition}
Let $0<\mu <1$, $0\leq\nu \leq 1$, $f\in C\left(\Delta,\mathbb{R}\right)$ and $\rm\Psi \in C^{1}(\Delta,\mathbb{R})$ such that $\rm\Psi ^{\prime }(x)\neq 0$, $\forall x\in \Delta$. The $\rm\Psi-$fractional integrals $(\mbox{left-sided and right-sided})$ $\mathbf{I}_{a+}^{\mu ,\nu ;\rm\Psi}(\cdot )$ and $\mathbf{I}_{b-}^{\mu ,\nu ;\rm\Psi }(\cdot )$ of a function $f$ of order $\mu $ are given by 
\begin{equation}
\mathbf{I}_{a+}^{\mu ,\nu ;\rm\Psi }f\left( x\right) :=\mathcal{D}_{a+}^{\left( 1-\nu \right) \left( 1-\mu \right) ;\rm\Psi }I_{a+}^{1;\rm\Psi }\mathcal{D}_{a+}^{\nu \left( 1-\mu \right) ;\rm\Psi }f\left( x\right) 
\label{jose}
\end{equation}%
and 
\begin{equation}
\mathbf{I}_{b-}^{\mu ,\nu ;\rm\Psi }f\left( x\right) :=\mathcal{D}_{b-}^{\nu \left( 1-\mu \right) ;\rm\Psi }I_{b-}^{1;\rm\Psi }\mathcal{D}_{b-}^{\left( 1-\nu \right) \left( 1-\mu \right) ;\rm\Psi }f\left(x\right) ,  \label{jose1}
\end{equation}%
where $\mathcal{D}_{a+}^{\mu ,\rm\Psi }(\cdot )$, $I_{a+}^{1,\rm\Psi }(\cdot )$, $\mathcal{D}_{b-}^{\mu ,\rm\Psi }(\cdot )$ and $I_{b-}^{1,\rm\Psi }(\cdot )$ are ${\rm \Psi-RL}$ fractional derivatives and fractional integrals according {\rm Eq.(\ref{D2})}, {\rm Eq.(\ref{P3})}, {\rm Eq.(\ref{D3})} and {\rm Eq.(\ref{P4})}, respectively.
\end{definition}
The Eq.(\ref{jose}) and Eq.(\ref{jose1}), can be written as 
\begin{equation*}
\mathbf{I}_{a+}^{\mu ,\nu ;\rm\Psi }f\left( x\right) :=\mathcal{D}_{a+}^{1-\xi ;\rm\Psi }I_{a+}^{1;\rm\Psi }\mathcal{D}_{a+}^{\xi -\mu ;\rm\Psi }f\left( x\right) 
\end{equation*}%
and 
\begin{equation*}
\mathbf{I}_{b-}^{\mu ,\nu ;\rm\Psi }f\left( x\right) :=\mathcal{D}_{b-}^{\xi -\mu ;\rm\Psi }I_{b-}^{1;\rm\Psi }\mathcal{D}_{b-}^{1-\xi ;\rm\Psi }f\left( x\right) ,
\end{equation*}
with $\xi =\mu +\nu (1-\mu )$ and $0<\xi <1$.

Note that, for $\rm\Psi \left( t\right) =t$ and applying the limit $\mu \rightarrow 1$, on both side of the Eq.(3.1), we have
\begin{equation}
\underset{\mu \rightarrow 1^{-}}{\lim }\mathbf{I}_{a+}^{\mu ,\nu ;\rm\Psi }f\left( x\right) =\underset{\mu \rightarrow 1^{-}}{\lim }\mathcal{D}_{a+}^{1-\xi ;\rm\Psi}I_{a+}^{1;\rm\Psi }\mathcal{D}_{a+}^{\xi -\mu ;\rm\Psi }f\left( x\right) =I_{a+}f\left(x\right).
\end{equation}

\textbf{In relation to the integral $\mathcal{I}_{a+}^{\mu }\left( \cdot \right) $, just take}
\begin{equation}
\underset{\mu \rightarrow 0}{\lim }\mathcal{I}_{a+}^{\mu }f\left( x\right)=f\left( x\right) .
\end{equation}

\textbf{Now, applying the limit on both sides of Eq.(19), with $\mu \rightarrow 0$ and $\nu \rightarrow 1$, we obtain}
\begin{equation}
\underset{\nu \rightarrow 1}{\underset{\mu \rightarrow 0}{\lim }}\mathcal{D}_{a+}^{\left( 1-\nu \right) \left( 1-\mu \right) ;\rm\Psi }I_{a+}^{1;\rm\Psi}\mathcal{D}_{a+}^{\nu \left( 1-\mu \right) ;\rm\Psi }f\left( x\right) =f\left(
x\right) .
\end{equation}

\textbf{Note that to obtain the identity operator for Eq.(\ref{jose}), in addition to $\mu \rightarrow 0$, we have to impose the condition $\nu \rightarrow 1$.}

\begin{remark}
applying $\rm\Psi (x)=x$ in {\rm Eq.(\ref{jose})} and {\rm Eq.(\ref{jose1})}, we have integral-type fractional operators as introduced by Xu and Agrawal {\rm \cite{xu}}, given by 
\begin{equation*}
\mathbf{I}_{a+}^{\mu ,\nu ;\rm\Psi}f\left( x\right) =\mathbf{D}_{a+}^{\left(
1-\nu \right) \left( 1-\mu \right) }I_{a+}^{1}\mathbf{D}_{a+}^{\nu
\left( 1-\mu \right) }f\left( x\right) =\ell _{a+}^{\mu ,\nu
}f\left( x\right) 
\end{equation*}%
and 
\begin{equation*}
\mathbf{I}_{b-}^{\mu ,\nu ;\rm\Psi}f\left( x\right) =\mathbf{D}_{b-}^{\nu
\left( 1-\mu \right) }I_{b-}^{1}\mathbf{D}_{b-}^{\left( 1-\nu \right)
\left( 1-\mu \right) }f\left( x\right) =\ell _{b-}^{\mu ,\nu
}f\left( x\right) .
\end{equation*}
\end{remark}

\begin{remark}
Recently, Sousa and Oliveira {\rm \cite{JEM1}} introduced the ${\rm \Psi-H}$ fractional differentiation operator and proved that it is possible to obtain a class of fractional operators, in particular the ${\rm \Psi-RL}$ fractional operator as used to define the above fractional integral. In this sense, from an adequate choice of $\rm\Psi (x) $ we obtain a class of fractional integration operators from {\rm Eq.(\ref{jose})} and {\rm Eq.(\ref{jose1})}.
\end{remark}

\textbf{One of the first properties when proposing a fractional integral is given by the following theorem.}

\begin{theorem} \textbf{Let $0<\mu <1$, $\ f,g\in C\left(\Delta,\mathbb{R}\right) $ and $\rm\Psi \in C^{1}\left(\Delta,\mathbb{R}\right) $ such that $\rm\Psi ^{\prime }\left( x\right)\neq 0$ $\forall x\in \Delta$. Then, we have the following properties:}
\begin{enumerate}

\item \textbf{$\mathbf{I}_{a+}^{\mu ,\nu ;\rm\Psi }\left( \lambda f\pm g\right) \left(x\right) =\lambda \mathbf{I}_{a+}^{\mu ,\nu ;\rm\Psi }f\left( x\right) \pm \mathbf{I}_{a+}^{\mu ,\nu ;\rm\Psi }g\left( x\right) ,$ for $\lambda \in \mathbb{R}$;}

\item \textbf{If $t=a$, then $\mathbf{I}_{a+}^{\mu ,\nu ;\rm\Psi }f\left( a\right) =0$;}

\item \textbf{If $f\left( x\right) \geq 0,$ then $\mathbf{I}_{a+}^{\mu ,\nu ;\rm\Psi }f\left(x\right) \geq 0$.}
\end{enumerate}
\end{theorem}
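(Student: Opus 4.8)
The plan is to reduce the three-fold composition defining $\mathbf{I}_{a+}^{\mu,\nu;\Psi}$ to the single $\Psi$-RL fractional integral $\mathcal{I}_{a+}^{\mu;\Psi}$, and then read off all three properties from its explicit integral representation. Writing $D_\Psi := \tfrac{1}{\Psi'(x)}\tfrac{d}{dx}$, $\xi = \mu+\nu(1-\mu)$ and $\beta := \xi-\mu = \nu(1-\mu)$, I would start from the equivalent form $\mathbf{I}_{a+}^{\mu,\nu;\Psi}f = \mathcal{D}_{a+}^{1-\xi;\Psi}\,I_{a+}^{1;\Psi}\,\mathcal{D}_{a+}^{\beta;\Psi}f$ recorded just after Eq.~(\ref{jose}).

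First I would simplify the inner pair $I_{a+}^{1;\Psi}\mathcal{D}_{a+}^{\beta;\Psi}$. Since $\mathcal{D}_{a+}^{\beta;\Psi}f = D_\Psi\,\mathcal{I}_{a+}^{1-\beta;\Psi}f$ and, by the fundamental theorem of calculus, $I_{a+}^{1;\Psi}[D_\Psi G](x) = \int_a^x G'(t)\,dt = G(x)-G(a)$, with $G := \mathcal{I}_{a+}^{1-\beta;\Psi}f$ satisfying $G(a)=0$ (the $\Psi$-fractional integral of a bounded function vanishes at the lower limit), the inner pair collapses to $\mathcal{I}_{a+}^{1-\beta;\Psi}f$. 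Applying the outer derivative and invoking the left-inverse relation $\mathcal{D}_{a+}^{\alpha;\Psi}\mathcal{I}_{a+}^{\gamma;\Psi} = \mathcal{I}_{a+}^{\gamma-\alpha;\Psi}$ for $\gamma\ge\alpha\ge0$ (which follows from the semigroup law of Lemma~\ref{LE} together with $D_\Psi\mathcal{I}_{a+}^{1;\Psi}=\mathrm{id}$), with $\alpha = 1-\xi$ and $\gamma = 1-\beta = 1-\xi+\mu$, I obtain
\begin{equation*}
\mathbf{I}_{a+}^{\mu,\nu;\Psi}f(x) = \mathcal{I}_{a+}^{\mu;\Psi}f(x) = \frac{1}{\Gamma(\mu)}\int_a^x \Psi'(t)\bigl(\Psi(x)-\Psi(t)\bigr)^{\mu-1}f(t)\,dt .
\end{equation*}
The degenerate case $\nu=0$, where $\mathcal{D}_{a+}^{\beta;\Psi}=\mathrm{id}$, is handled separately and yields the same conclusion; one may also cross-check the identity on the power functions of Lemma~\ref{lema2}.

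With this representation in hand the three claims are immediate. Linearity (item 1) follows because the kernel is independent of $f$, so $\mathbf{I}_{a+}^{\mu,\nu;\Psi}$ inherits linearity from the integral; equivalently it follows at once from the linearity of each of the three constituent operators. For item 2, evaluation at the lower endpoint gives an integral over the degenerate interval $[a,a]$, whence $\mathbf{I}_{a+}^{\mu,\nu;\Psi}f(a)=0$. For item 3, since $\Psi$ is increasing we have $\Psi'(t)>0$ and $\bigl(\Psi(x)-\Psi(t)\bigr)^{\mu-1}\ge0$ for $a\le t\le x$, so the kernel is nonnegative and $f\ge0$ forces $\mathbf{I}_{a+}^{\mu,\nu;\Psi}f\ge0$.

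The main obstacle is item 3. Positivity cannot be seen from the raw composition, because the $\Psi$-RL derivatives in Eq.~(\ref{jose}) do not preserve sign: $\mathcal{D}_{a+}^{\beta;\Psi}f$ may be negative even when $f\ge0$. The content of the theorem therefore rests entirely on the reduction identity above, whose only delicate point is verifying that the boundary contribution $G(a)$ vanishes so that the inner derivative and integer integral cancel cleanly; once that cancellation is justified, nonnegativity of the resulting Riemann--Liouville kernel does the rest.
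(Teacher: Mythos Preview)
Your argument is correct, and it is considerably more substantive than the paper's own proof, which consists of a single sentence referring to the defining equations. The key difference is that you actually establish the reduction $\mathbf{I}_{a+}^{\mu,\nu;\Psi}f=\mathcal{I}_{a+}^{\mu;\Psi}f$ on $C(\Delta,\mathbb{R})$, via the cancellation $I_{a+}^{1;\Psi}D_\Psi G=G$ (using $G(a)=\mathcal{I}_{a+}^{1-\beta;\Psi}f(a)=0$) together with the semigroup law and the left-inverse relation. This is the right move, and it is precisely what makes item~3 provable: as you observe, positivity cannot be read off from the raw composition in Eq.~(\ref{jose}) because $\mathcal{D}_{a+}^{\beta;\Psi}$ does not preserve sign, whereas the single kernel $\Psi'(t)(\Psi(x)-\Psi(t))^{\mu-1}$ is manifestly nonnegative. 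The paper's one-line appeal to the definitions leaves exactly this point unaddressed. Your approach also has a notable side effect: it shows that the operator does not depend on $\nu$ at all, which one can confirm independently by redoing the computation of Lemma~\ref{lema4} carefully (the result there should be $\dfrac{\Gamma(\delta)}{\Gamma(\delta+\mu)}(\Psi(x)-\Psi(a))^{\delta+\mu-1}$, matching $\mathcal{I}_{a+}^{\mu;\Psi}$), and by noting that the extra terms in Theorem~\ref{teo312} vanish for continuous $f$ since they are fractional integrals of positive order evaluated at the lower limit.
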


\begin{proof}
\textbf{By means of the definitions Eq.(\ref{P3}), Eq.(\ref{P4}), Eq.(\ref{D2}) and Eq.(\ref{D3}), the proof is performed.}
\end{proof}

Next, we present a theorem that guarantees that the $\rm\Psi-$fractional integral is bounded, that is, it is well defined.

\begin{theorem} Let $0<\mu<1$, $0\leq \nu \leq 1$ and $\mathcal{D}^{\mu;\rm\Psi}_{a+}(\cdot)$ the ${\rm \Psi-RL}$ fractional derivative. If $f\in C\left(\Delta,\mathbb{R}\right)$, then 
\begin{equation}
\left\Vert \mathbf{I}_{a+}^{\mu ,\nu ;\rm\Psi }f\right\Vert _{C_{\xi ,\rm\Psi
}\left(\Delta,\mathbb{R}\right)}\leq s\left\Vert f\right\Vert _{C_{\xi ,\rm\Psi }^{\left[ 1\right] }\left(\Delta,\mathbb{R}\right)}
\end{equation}
with
\begin{equation*}
s=\frac{\left( \rm\Psi \left( b\right) -\rm\Psi \left( a\right) \right) ^{1+\mu
}}{\Gamma \left( 1+\xi \right) \Gamma \left( 2+\mu -\xi \right) }
\end{equation*}
and $\xi =\mu +\nu \left( 1-\mu \right) $.
\end{theorem}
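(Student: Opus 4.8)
The plan is to collapse the three-operator composition into a single $\Psi$-fractional integral of a $\Psi$-fractional derivative, and then estimate that expression in two chained stages using Lemma \ref{lema2}. First I would start from the equivalent form $\mathbf{I}_{a+}^{\mu,\nu;\Psi}f=\mathcal{D}_{a+}^{1-\xi;\Psi}I_{a+}^{1;\Psi}\mathcal{D}_{a+}^{\xi-\mu;\Psi}f$ and simplify the outer pair of operators. Since $\mathcal{D}_{a+}^{1-\xi;\Psi}=\big(\frac{1}{\Psi'}\frac{d}{dx}\big)\mathcal{I}_{a+}^{\xi;\Psi}$ and $I_{a+}^{1;\Psi}=\mathcal{I}_{a+}^{1;\Psi}$, Lemma \ref{LE} gives $\mathcal{I}_{a+}^{\xi;\Psi}I_{a+}^{1;\Psi}=\mathcal{I}_{a+}^{1+\xi;\Psi}$, and the fundamental theorem of calculus for the $\Psi$-integral ($\frac{1}{\Psi'}\frac{d}{dx}\mathcal{I}_{a+}^{1;\Psi}=\mathrm{Id}$) then yields
\[
\mathbf{I}_{a+}^{\mu,\nu;\Psi}f(x)=\mathcal{I}_{a+}^{\xi;\Psi}\mathcal{D}_{a+}^{\xi-\mu;\Psi}f(x).
\]
This exhibits the net order $\mu=\xi-(\xi-\mu)$ and isolates exactly the two operators whose $\Gamma$-factors will produce $\Gamma(1+\xi)$ and $\Gamma(2+\mu-\xi)$.

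Next, since $\xi-\mu=\nu(1-\mu)\in[0,1)$, I would represent the inner $\Psi$-RL derivative by transferring the differentiation onto $f$ via integration by parts, writing $\mathcal{D}_{a+}^{\xi-\mu;\Psi}f$ as a boundary term plus $\mathcal{I}_{a+}^{1-\xi+\mu;\Psi}\big(\frac{1}{\Psi'}f'\big)$, so that the $C_{\xi;\Psi}^{[1]}$-norm controls the integrand. Bounding $\frac{1}{\Psi'}f'$ by $\|f\|_{C_{\xi;\Psi}^{[1]}}$ and applying Lemma \ref{lema2} to $\mathcal{I}_{a+}^{1-\xi+\mu;\Psi}[1]$ dominates $|\mathcal{D}_{a+}^{\xi-\mu;\Psi}f(x)|$ by a multiple of $(\Psi(x)-\Psi(a))^{1-\xi+\mu}/\Gamma(2+\mu-\xi)$. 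Replacing this power by its maximal value at $x=b$ (legitimate because $\Psi$ is increasing), and then applying $\mathcal{I}_{a+}^{\xi;\Psi}$ once more — estimating $\mathcal{I}_{a+}^{\xi;\Psi}[1]$ through Lemma \ref{lema2} to produce the factor $1/\Gamma(1+\xi)$ together with the power $(\Psi(b)-\Psi(a))^{\xi}$ — assembles the two $\Gamma$-factors and the exponent $(1-\xi+\mu)+\xi=1+\mu$, which is precisely the constant $s$. Taking the maximum over $\Delta$ closes the argument.

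The hard part will be the bookkeeping around the weight and the boundary term rather than any single estimate. I must verify that the $f(a)$-type boundary contribution of the RL derivative either vanishes or is absorbed into $\|f\|_{C_{\xi;\Psi}^{[1]}}$; confirm that the composition identity $\mathcal{D}_{a+}^{1-\xi;\Psi}I_{a+}^{1;\Psi}=\mathcal{I}_{a+}^{\xi;\Psi}$ is legitimate (continuity of $\frac{1}{\Psi'}f'$ is what the $C^{[1]}$ hypothesis really supplies); and track the weight $(\Psi(x)-\Psi(a))^{\xi}$ through the two chained estimates so the exponents combine to $1+\mu$. The cleanest route is the crude two-stage chaining above, in which at each stage the intermediate function is dominated by its supremum before the next operator is applied; this is exactly what reproduces the stated product $\Gamma(1+\xi)\Gamma(2+\mu-\xi)$, whereas the sharp form of Lemma \ref{lema2} would telescope the two factors into a single $\Gamma(2+\mu)$.
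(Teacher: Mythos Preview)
Your plan is correct and reaches the stated constant, but it follows a different decomposition from the paper. The paper never collapses the composition: it keeps the three operators $\mathcal{D}_{a+}^{1-\xi;\Psi}$, $I_{a+}^{1;\Psi}$, $\mathcal{D}_{a+}^{\xi-\mu;\Psi}$ separate and chains three estimates, using the boundedness of the $\Psi$-Riemann--Liouville derivative (obtained by sending $\nu\to 0$ in the $\Psi$-Hilfer boundedness theorem, Eq.~(\ref{L1})) as a black box for both the outer and the inner derivative, with a crude $I_{a+}^{1;\Psi}$ bound in between. You instead simplify $\mathcal{D}_{a+}^{1-\xi;\Psi}I_{a+}^{1;\Psi}=\mathcal{I}_{a+}^{\xi;\Psi}$ algebraically via Lemma~\ref{LE} and then re-derive the inner derivative estimate from scratch through the Caputo-type representation. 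Your route is more self-contained and makes the origin of the two factors $\Gamma(1+\xi)$ and $\Gamma(2+\mu-\xi)$ completely transparent; the paper's route is shorter precisely because it quotes Eq.~(\ref{L1}) rather than reproving any part of it. Your caution about the $f(a)$ boundary term is justified: it does not vanish, and since $(\Psi(x)-\Psi(a))^{-(\xi-\mu)}$ is unbounded near $x=a$ you cannot freeze it at $x=b$ before applying $\mathcal{I}_{a+}^{\xi;\Psi}$; you must propagate that piece through $\mathcal{I}_{a+}^{\xi;\Psi}$ exactly via Lemma~\ref{lema2} and then absorb the resulting contribution $|f(a)|(\Psi(b)-\Psi(a))^{\mu}/\Gamma(1+\mu)$ into $\|f\|_{C^{[1]}_{\xi;\Psi}}$. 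The paper sidesteps this bookkeeping entirely by citing the ready-made bound Eq.~(\ref{L1}).
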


\begin{proof}
First, by applying the limit $\nu\rightarrow 0$ on both sides of {\rm Eq.(\ref{L1})}, we have
\begin{equation}\label{L2}
\left\Vert \mathcal{D}_{a+}^{\mu ;\rm\Psi }f\right\Vert _{C_{\xi ,\rm\Psi }\left(\Delta,\mathbb{R}\right)}\leq
s_{1}\left\Vert f\right\Vert _{C_{\xi ,\rm\Psi }^{\left[ 1\right] }\left(\Delta,\mathbb{R}\right)},
\end{equation}
with
\begin{equation*}
s_{1}=\frac{\left( \rm\Psi \left( b\right) -\rm\Psi \left( a\right) \right)
^{1-\mu }}{\Gamma \left( 2-\mu \right) }.
\end{equation*}

So, by {\rm Eq.(\ref{L2})}, we get
\begin{eqnarray}\label{L3}
\left\Vert \mathbf{I}_{a+}^{\mu ,\nu ;\rm\Psi }f\right\Vert _{C_{\xi ,\rm\Psi }\left(\Delta,\mathbb{R}\right)}
&=&\left\Vert \mathcal{D}_{a+}^{\left( 1-\nu \right) \left( 1-\mu \right) ;\rm\Psi }%
\text{ }I_{a+}^{1;\rm\Psi }\text{ }\mathcal{D}_{a+}^{\nu \left( 1-\mu \right) ;\rm\Psi
}f\right\Vert _{C_{\xi ,\rm\Psi }\left(\Delta,\mathbb{R}\right)}  \notag \\
&\leq &s_{2}\left\Vert I_{a+}^{1;\rm\Psi }\text{ }\mathcal{D}_{a+}^{\nu \left( 1-\mu
\right) ;\rm\Psi }f\right\Vert _{C_{\xi ,\rm\Psi }\left(\Delta,\mathbb{R}\right)},
\end{eqnarray}
with
\begin{equation*}
s_{2}=\frac{\left( \rm\Psi \left( b\right) -\rm\Psi \left( a\right) \right)
^{\xi }}{\Gamma \left( 1+\xi \right) }
\end{equation*}
and $\xi =\mu +\nu \left( 1-\mu \right) $.

On the other hand, we can write
\begin{eqnarray}\label{L4}
\left\Vert I_{a+}^{1;\rm\Psi }\text{ }\mathcal{D}_{a+}^{\nu \left( 1-\mu \right)
;\rm\Psi }f\right\Vert _{C_{\xi ,\rm\Psi }\left(\Delta,\mathbb{R}\right)} &=&\underset{x\in \Delta 
}{\max }\left\vert \left( \rm\Psi \left( x\right) -\rm\Psi \left( a\right) \right)
^{\xi }I_{a+}^{1;\rm\Psi }\text{ }\mathcal{D}_{a+}^{\nu \left( 1-\mu \right)
;\rm\Psi }f\left( x\right) \right\vert   \notag \\
&\leq &\left\Vert \mathcal{D}_{a+}^{\nu \left( 1-\mu \right) ;\rm\Psi }f\right\Vert
_{C_{\xi ,\rm\Psi }\left(\Delta,\mathbb{R}\right)}\underset{x\in \Delta }{\max }\left\vert
I_{a+}^{1;\rm\Psi }(1)\right\vert   \notag \\
&\leq &\left( \rm\Psi \left( b\right) -\rm\Psi \left( a\right) \right) \left\Vert
\mathcal{D}_{a+}^{\nu \left( 1-\mu \right) ;\rm\Psi }f\right\Vert _{C_{\xi ,\rm\Psi
}}  \notag \\
&\leq &s_{3}\left\Vert f\right\Vert _{C_{\xi ,\rm\Psi }^{\left[ 1\right] }\left(\Delta,\mathbb{R}\right)},
\end{eqnarray}
with
\begin{equation*}
s_{3}=\frac{\left( \rm\Psi \left( b\right) -\rm\Psi \left( a\right) \right) ^{1-\nu \left( 1-\mu \right) }}{\Gamma \left( 2-\nu \left( 1-\mu \right) \right) }.
\end{equation*}

So, using {\rm Eq.(\ref{L3})} and {\rm Eq.(\ref{L4})}, we conclude that
\begin{eqnarray*}
\left\Vert \mathbf{I}_{a+}^{\mu ,\nu ;\rm\Psi }f\right\Vert _{C_{\xi ,\rm\Psi }\left(\Delta,\mathbb{R}\right)}
&\leq &s_{2}\left\Vert I_{a+}^{1;\rm\Psi }\text{ }\mathcal{D}_{a+}^{\nu \left( 1-\mu
\right) ;\rm\Psi }f\right\Vert _{C_{\xi ,\rm\Psi }\left(\Delta,\mathbb{R}\right)}  \notag \\
&\leq &s_{3}s_{2}\left\Vert f\right\Vert _{C_{\xi ,\rm\Psi }^{\left[ 1\right]
}\left(\Delta,\mathbb{R}\right)}  \notag \\
&=&s\left\Vert f\right\Vert _{C_{\xi ,\rm\Psi }^{\left[ 1\right] }\left(\Delta,\mathbb{R}\right)}
\end{eqnarray*}
with
\begin{equation*}
s=\frac{\left( \rm\Psi \left( b\right) -\rm\Psi \left( a\right) \right) ^{1+\mu
}}{\Gamma \left( 1+\xi \right) \Gamma \left( 2+\mu -\xi \right) },
\end{equation*}
which complete the proof.
\end{proof}

\begin{theorem} Let $0<\mu <1$, $0\leq \nu \leq 1$ and $f,\rm\Psi \in C\left(\Delta,\mathbb{R}\right)$ such that $\rm\Psi ^{\prime }\left( t\right) \neq 0$, $\forall$ $t\in \left[ a,b\right) $, a closed interval. Then, we have
\begin{equation}
\left\Vert \mathbf{I}_{a+}^{\mu ,\nu ;\rm\Psi }f-\mathbf{I}_{a+}^{\mu ,\nu ;\rm\Psi }g\right\Vert _{C_{\xi ;\rm\Psi }\left(\Delta,\mathbb{R}\right)}\leq K\left\Vert f-g\right\Vert_{C_{\xi ;\rm\Psi }\left(\Delta,\mathbb{R}\right)}.
\end{equation}
\end{theorem}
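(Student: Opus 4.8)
The plan is to exploit the fact that $\mathbf{I}_{a+}^{\mu ,\nu ;\rm\Psi }$ is a \emph{linear} operator, so that the desired Lipschitz estimate is nothing more than the boundedness estimate applied to the single function $f-g$. Concretely, by the linearity property established above (item (1) of the preceding properties theorem, taken with $\lambda=1$ together with the minus sign) one has
\[
\mathbf{I}_{a+}^{\mu ,\nu ;\rm\Psi }f-\mathbf{I}_{a+}^{\mu ,\nu ;\rm\Psi }g=\mathbf{I}_{a+}^{\mu ,\nu ;\rm\Psi }\left( f-g\right),
\]
and since $f,g\in C\left(\Delta,\mathbb{R}\right)$ the function $h:=f-g$ again lies in $C\left(\Delta,\mathbb{R}\right)$, hence is an admissible input for the boundedness theorem.

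Next I would apply the boundedness theorem proved immediately above to $h=f-g$. That theorem yields
\[
\left\Vert \mathbf{I}_{a+}^{\mu ,\nu ;\rm\Psi }\left( f-g\right) \right\Vert _{C_{\xi ;\rm\Psi }\left(\Delta,\mathbb{R}\right)}\leq s\left\Vert f-g\right\Vert _{C_{\xi ;\rm\Psi }^{\left[ 1\right] }\left(\Delta,\mathbb{R}\right)},
\]
with $s=\dfrac{\left( \rm\Psi \left( b\right) -\rm\Psi \left( a\right) \right) ^{1+\mu }}{\Gamma \left( 1+\xi \right) \Gamma \left( 2+\mu -\xi \right) }$ and $\xi =\mu +\nu \left( 1-\mu \right)$. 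Combining this with the previous display identifies the claimed Lipschitz constant as $K=s$, and the theorem follows; no further integral estimation is needed, the entire content being ``bounded linear $\Rightarrow$ Lipschitz.''

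The one genuinely delicate point — and the only real obstacle — is the bookkeeping of norms on the right-hand side. The boundedness theorem is stated with the stronger weighted $C^{1}$-type norm $\left\Vert \cdot \right\Vert _{C_{\xi ;\rm\Psi }^{\left[ 1\right] }}$, whereas the present statement records the plain weighted norm $\left\Vert \cdot \right\Vert _{C_{\xi ;\rm\Psi }}$. To close this gap I would either (i) read the hypotheses as placing $f,g$ in the weighted $C^{1}$ space, so that $\left\Vert f-g\right\Vert _{C_{\xi ;\rm\Psi }^{\left[ 1\right] }}$ is finite and is controlled by a fixed multiple of $\left\Vert f-g\right\Vert _{C_{\xi ;\rm\Psi }}$ over the compact $\Delta$, absorbing that multiple into $K$; or (ii) verify directly that, on the function class under consideration, the two norms are comparable. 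Apart from this matching of norms, the argument reduces to a one-line consequence of linearity together with the preceding boundedness result.
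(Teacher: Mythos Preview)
Your route is genuinely different from the paper's, and the norm mismatch you flag is a real obstruction that your proposed fixes do not remove.

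The paper does \emph{not} invoke the preceding boundedness theorem at all. Instead it argues directly from the definition: after using linearity it ``factors out'' the weighted norm of $f-g$ and bounds what remains by the operator acting on the constant function $1$, obtaining
\[
\left\Vert \mathbf{I}_{a+}^{\mu ,\nu ;\rm\Psi }f-\mathbf{I}_{a+}^{\mu ,\nu ;\rm\Psi }g\right\Vert _{C_{\xi ;\rm\Psi }}
\;\leq\;
\left\Vert f-g\right\Vert _{C_{\xi ;\rm\Psi }}\,\max_{x\in\Delta}\left\vert \mathcal{D}_{a+}^{1-\xi ;\rm\Psi }I_{a+}^{1;\rm\Psi }\mathcal{D}_{a+}^{\xi -\mu ;\rm\Psi}(1)\right\vert
\;=\;K\left\Vert f-g\right\Vert _{C_{\xi ;\rm\Psi }},
\]
so the constant $K$ is $\max_{x\in\Delta}\bigl|\mathbf{I}_{a+}^{\mu,\nu;\rm\Psi}(1)(x)\bigr|$, not the constant $s$ from the boundedness theorem. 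This keeps the same weighted $C_{\xi;\rm\Psi}$ norm on both sides, exactly as in the statement.

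By contrast, your argument lands on $\left\Vert f-g\right\Vert_{C_{\xi;\rm\Psi}^{[1]}}$, and neither of your two remedies closes the gap. The $C_{\xi;\rm\Psi}^{[1]}$ norm carries derivative information and is \emph{strictly stronger} than the $C_{\xi;\rm\Psi}$ norm; compactness of $\Delta$ gives $\|\cdot\|_{C_{\xi;\rm\Psi}}\le \|\cdot\|_{C_{\xi;\rm\Psi}^{[1]}}$, not the reverse inequality you need. A sequence with bounded $C_{\xi;\rm\Psi}$ norm can have unbounded $C_{\xi;\rm\Psi}^{[1]}$ norm, so there is no universal constant allowing you to ``absorb'' the discrepancy into $K$. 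Thus your approach, while cleaner in spirit (bounded linear $\Rightarrow$ Lipschitz), proves only a weaker statement with the $C_{\xi;\rm\Psi}^{[1]}$ norm on the right, not the theorem as written.
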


\begin{proof} 
In fact, by definition of $\rm\Psi-$fractional integral $\mathbf{I}_{a+}^{\mu ,\nu ;\rm\Psi }\left(\cdot \right) $, we have
\begin{eqnarray}
\left\Vert \mathbf{I}_{a+}^{\mu ,\nu ;\rm\Psi }f-\mathbf{I}_{a+}^{\mu ,\nu ;\rm\Psi }g\right\Vert _{C_{\xi ;\rm\Psi }\left(\Delta,\mathbb{R}\right)} &\leq &\left\Vert f-g\right\Vert _{C_{\xi ;\rm\Psi }\left(\Delta,\mathbb{R}\right)}\underset{x\in \Delta }{\max }\left\vert \mathcal{D}_{a+}^{1-\xi ;\rm\Psi }I_{a+}^{1;\rm\Psi }\mathcal{D}_{a+}^{\xi -\mu ;\rm\Psi}(1)\right\vert   \notag \\ 
&\leq &K\left\Vert f-g\right\Vert _{C_{\xi ;\rm\Psi }\left(\Delta,\mathbb{R}\right)}
\end{eqnarray}
\textbf{with $K>0$.}
\end{proof}

The Theorem \ref{teo310} below involves uniformly convergent sequence of continuous functions being important in the context of analysis within the fractional calculus.

\begin{theorem}\label{teo310} Let $0<\mu <1$ and $0\leq \nu \leq 1$. Suppose that $\left( f_{k}\right) _{k=1}^{\infty }$ is a uniformly convergent sequence of continuous functions on $\Delta $. Then, we have
\begin{equation}
\mathbf{I}_{a+}^{\mu ,\nu ;\rm\Psi }\underset{k\rightarrow \infty }{\lim }f_{k}\left( x\right)= \underset{k\rightarrow \infty }{\lim }\mathbf{I}_{a+}^{\mu,\nu ;\rm\Psi }f_{k}\left( x\right).
\end{equation}

Moreover, in particular, note that, the sequence of function $\left( \mathbf{I}_{a+}^{\mu ,\nu ;\rm\Psi }f_{k}\right) _{k=1}^{\infty }$ is uniformly convergent.
\end{theorem}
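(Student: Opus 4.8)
The plan is to reduce the statement to two facts already in hand: the linearity of $\mathbf{I}_{a+}^{\mu ,\nu ;\rm\Psi }$ and the Lipschitz-type bound proved in the theorem immediately above (with its constant $K>0$). First I would record that a uniform limit of continuous functions is continuous, so $f:=\lim_{k\to\infty}f_{k}$ belongs to $C(\Delta,\mathbb{R})$; consequently $\mathbf{I}_{a+}^{\mu ,\nu ;\rm\Psi }f$ is defined, both sides of the claimed identity make sense, and the hypotheses of the Lipschitz bound are met for the pair $f_{k},f$.

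Next I would use linearity (property (1) with $\lambda=1$) to write $\mathbf{I}_{a+}^{\mu ,\nu ;\rm\Psi }f_{k}-\mathbf{I}_{a+}^{\mu ,\nu ;\rm\Psi }f=\mathbf{I}_{a+}^{\mu ,\nu ;\rm\Psi }(f_{k}-f)$ and then invoke the bound of the previous theorem to obtain
\[
\left\Vert \mathbf{I}_{a+}^{\mu ,\nu ;\rm\Psi }f_{k}-\mathbf{I}_{a+}^{\mu ,\nu ;\rm\Psi }f\right\Vert _{C_{\xi ;\rm\Psi }(\Delta,\mathbb{R})}\leq K\left\Vert f_{k}-f\right\Vert _{C_{\xi ;\rm\Psi }(\Delta,\mathbb{R})}.
\]
Since ${\rm\Psi}$ is increasing and $0\leq\xi<1$, the weight obeys $\left( {\rm\Psi}(x)-{\rm\Psi}(a)\right)^{\xi}\leq\left( {\rm\Psi}(b)-{\rm\Psi}(a)\right)^{\xi}$ on $\Delta$, so
\[
\left\Vert f_{k}-f\right\Vert _{C_{\xi ;\rm\Psi }(\Delta,\mathbb{R})}\leq\left( {\rm\Psi}(b)-{\rm\Psi}(a)\right)^{\xi}\left\Vert f_{k}-f\right\Vert _{C(\Delta,\mathbb{R})}.
\]
Uniform convergence makes the last supremum norm tend to $0$, whence $\left\Vert \mathbf{I}_{a+}^{\mu ,\nu ;\rm\Psi }f_{k}-\mathbf{I}_{a+}^{\mu ,\nu ;\rm\Psi }f\right\Vert _{C_{\xi ;\rm\Psi }(\Delta,\mathbb{R})}\to0$, which is exactly the assertion $\lim_{k\to\infty}\mathbf{I}_{a+}^{\mu ,\nu ;\rm\Psi }f_{k}=\mathbf{I}_{a+}^{\mu ,\nu ;\rm\Psi }\lim_{k\to\infty}f_{k}$.

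For the final claim, applying the same estimate to $f_{k}-f_{m}$ and using that a uniformly convergent sequence is uniformly Cauchy shows that $(\mathbf{I}_{a+}^{\mu ,\nu ;\rm\Psi }f_{k})_{k}$ is Cauchy, hence convergent, in $\left\Vert \cdot\right\Vert _{C_{\xi ;\rm\Psi }(\Delta,\mathbb{R})}$; equivalently, the weighted images $\left( {\rm\Psi}(x)-{\rm\Psi}(a)\right)^{\xi}\mathbf{I}_{a+}^{\mu ,\nu ;\rm\Psi }f_{k}(x)$ converge uniformly on $\Delta$, which is the natural sense of uniform convergence in this weighted space. The point requiring care — and what I expect to be the main obstacle — is the behaviour at $x=a$: there the weight degenerates to $0$, so the weighted estimate yields uniform control only on each $[a+\delta,b]$ and does not by itself control $\mathbf{I}_{a+}^{\mu ,\nu ;\rm\Psi }f_{k}-\mathbf{I}_{a+}^{\mu ,\nu ;\rm\Psi }f$ pointwise near $a$. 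To push through to genuine uniform convergence on all of $\Delta$ one would additionally exploit property (2) of the linearity theorem, $\mathbf{I}_{a+}^{\mu ,\nu ;\rm\Psi }g(a)=0$, so that the differences vanish at the endpoint; the remaining analysis on a shrinking neighbourhood of $a$ is the only genuinely delicate step, everything else being a routine combination of linearity with the boundedness estimate.
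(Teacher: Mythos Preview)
Your argument is correct and, in fact, more carefully reasoned than the paper's. The route you take is genuinely different: you treat $\mathbf{I}_{a+}^{\mu ,\nu ;\rm\Psi }$ as a black box and exploit directly the Lipschitz-type bound $\left\Vert \mathbf{I}_{a+}^{\mu ,\nu ;\rm\Psi }f-\mathbf{I}_{a+}^{\mu ,\nu ;\rm\Psi }g\right\Vert _{C_{\xi ;\rm\Psi }}\leq K\left\Vert f-g\right\Vert _{C_{\xi ;\rm\Psi }}$ established in the preceding theorem, together with linearity. The paper instead unwinds the definition $\mathbf{I}_{a+}^{\mu ,\nu ;\rm\Psi }=\mathcal{D}_{a+}^{1-\xi ;\rm\Psi }I_{a+}^{1;\rm\Psi }\mathcal{D}_{a+}^{\xi -\mu ;\rm\Psi }$ and asserts that, because the constituent operators $\mathcal{D}_{a+}^{\mu ;\rm\Psi }$ and $I_{a+}^{\mu ;\rm\Psi }$ respect uniform convergence, the composite does as well; the whole proof is the single line
\[
\left\vert \mathcal{D}_{a+}^{1-\xi ;\rm\Psi }I_{a+}^{1;\rm\Psi }\mathcal{D}_{a+}^{\xi -\mu ;\rm\Psi }f_{k}(x)-\mathcal{D}_{a+}^{1-\xi ;\rm\Psi }I_{a+}^{1;\rm\Psi }\mathcal{D}_{a+}^{\xi -\mu ;\rm\Psi }f(x)\right\vert \rightarrow 0.
\]
Your approach is the sturdier one: the paper's factor-by-factor argument tacitly requires the fractional \emph{derivative} $\mathcal{D}_{a+}^{\,\cdot\,;\rm\Psi }$ to commute with uniform limits, which is not generally true for a differentiation-type operator, whereas your argument uses only a continuity estimate for the full operator that has already been proved. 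What the paper's version buys is brevity and an immediate conclusion in the ordinary sup norm (modulo the point just raised); what your version buys is rigor, at the cost of the weighted-norm-to-sup-norm endpoint issue at $x=a$ that you correctly flag and address via $\mathbf{I}_{a+}^{\mu ,\nu ;\rm\Psi }g(a)=0$.
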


\begin{proof} As $\mathcal{D}_{a+}^{\mu ;\rm\Psi }\left( \cdot \right) $ and $ \mathbf{I}_{a+}^{\mu ;\rm\Psi }\left( \cdot \right) $ there are uniformly convergent, we have
\begin{eqnarray*}
&&\left\vert \mathbf{I}_{a+}^{\mu ,\nu ;\rm\Psi }f_{k}\left( x\right) - \mathbf{I}_{a+}^{\mu ,\nu ;\rm\Psi }f\left( x\right) \right\vert   \notag\\
&=&\left\vert \mathcal{D}_{a+}^{1-\xi ;\rm\Psi }I_{a+}^{1;\rm\Psi }\mathcal{D}_{a+}^{\xi -\mu ;\rm\Psi }f_{k}\left( x\right) -\mathcal{D}_{a+}^{1-\xi ;\rm\Psi }I_{a+}^{1;\rm\Psi }\mathcal{D}_{a+}^{\xi -\mu
;\rm\Psi }f\left(x\right) \right\vert \rightarrow 0
\end{eqnarray*}
applying $k\rightarrow \infty$.
\end{proof}

\begin{theorem}\label{teo311} Let $0<\mu<1$ and $0<\nu<1$. If $f$ is differentiable and integrable, then
\begin{equation}
\mathbf{I}_{a+}^{\mu ,\nu ;\rm\Psi }\text{ } ^{H}\mathscr{D}_{a+}^{\mu ,\nu ;\rm\Psi } f\left( x\right) =f\left( x\right) -\Theta^{\xi}(x,a)\mathcal{I}_{a+}^{1-\xi ;\rm\Psi}f\left( a\right) ,
\end{equation}
with $\xi =\mu +\nu \left( 1-\mu \right) $.
\end{theorem}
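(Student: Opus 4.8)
The plan is to reduce the composite operator to a telescoping form by exploiting the compact representations of both operators in terms of $\xi = \mu + \nu(1-\mu)$. First I would rewrite the $\rm\Psi$-fractional integral in the compact shape $\mathbf{I}_{a+}^{\mu,\nu;\rm\Psi} = \mathcal{D}_{a+}^{1-\xi;\rm\Psi}\,I_{a+}^{1;\rm\Psi}\,\mathcal{D}_{a+}^{\xi-\mu;\rm\Psi}$ recorded just after Eq.(\ref{jose}), and the $\rm\Psi$-H derivative in the form $^{H}\mathscr{D}_{a+}^{\mu,\nu;\rm\Psi} = \mathcal{I}_{a+}^{\xi-\mu;\rm\Psi}\,\mathcal{D}_{a+}^{\xi;\rm\Psi}$ of Eq.(\ref{HIL2}). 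Composing them gives
\begin{equation*}
\mathbf{I}_{a+}^{\mu,\nu;\rm\Psi}\,{}^{H}\mathscr{D}_{a+}^{\mu,\nu;\rm\Psi}f = \mathcal{D}_{a+}^{1-\xi;\rm\Psi}\,I_{a+}^{1;\rm\Psi}\,\mathcal{D}_{a+}^{\xi-\mu;\rm\Psi}\,\mathcal{I}_{a+}^{\xi-\mu;\rm\Psi}\,\mathcal{D}_{a+}^{\xi;\rm\Psi}f,
\end{equation*}
and since $\xi-\mu = \nu(1-\mu) > 0$ under the hypotheses $0<\mu<1$ and $0<\nu<1$, the inner pair $\mathcal{D}_{a+}^{\xi-\mu;\rm\Psi}\mathcal{I}_{a+}^{\xi-\mu;\rm\Psi}$ should collapse to the identity.

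The cancellation $\mathcal{D}_{a+}^{\alpha;\rm\Psi}\mathcal{I}_{a+}^{\alpha;\rm\Psi}f = f$ follows from the semigroup Lemma \ref{LE}, which rewrites $\mathcal{I}_{a+}^{1-\alpha;\rm\Psi}\mathcal{I}_{a+}^{\alpha;\rm\Psi}$ as $\mathcal{I}_{a+}^{1;\rm\Psi} = I_{a+}^{1;\rm\Psi}$, together with the fact that $\left(\frac{1}{{\rm\Psi}'}\frac{d}{dx}\right)I_{a+}^{1;\rm\Psi}$ is the identity by the fundamental theorem of calculus. After this cancellation the expression reduces to $\mathcal{D}_{a+}^{1-\xi;\rm\Psi}\,I_{a+}^{1;\rm\Psi}\,\mathcal{D}_{a+}^{\xi;\rm\Psi}f$.

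Next I would evaluate $I_{a+}^{1;\rm\Psi}\mathcal{D}_{a+}^{\xi;\rm\Psi}f$ directly. Writing $\mathcal{D}_{a+}^{\xi;\rm\Psi}f = \left(\frac{1}{{\rm\Psi}'}\frac{d}{dx}\right)\mathcal{I}_{a+}^{1-\xi;\rm\Psi}f$ and substituting into $I_{a+}^{1;\rm\Psi}g(x) = \int_a^x {\rm\Psi}'(t)\,g(t)\,dt$, the factor ${\rm\Psi}'(t)$ cancels and the fundamental theorem of calculus yields
\begin{equation*}
I_{a+}^{1;\rm\Psi}\mathcal{D}_{a+}^{\xi;\rm\Psi}f(x) = \mathcal{I}_{a+}^{1-\xi;\rm\Psi}f(x) - \mathcal{I}_{a+}^{1-\xi;\rm\Psi}f(a).
\end{equation*}
Applying the outer operator $\mathcal{D}_{a+}^{1-\xi;\rm\Psi}$ to the first term returns $f(x)$ by the same inverse property as above, while the second term is a constant whose $\rm\Psi$-RL fractional derivative is computed from the power-function rule (the $\rm\Psi$-RL analogue of Lemma \ref{lema2}, applied with exponent $0$), producing exactly $\Theta^{\xi}(x,a)\,\mathcal{I}_{a+}^{1-\xi;\rm\Psi}f(a)$. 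Collecting the two contributions delivers the claimed identity.

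The main obstacle is the boundary term: one must recognize $\mathcal{I}_{a+}^{1-\xi;\rm\Psi}f(a)$ as a constant and verify that its $\rm\Psi$-RL fractional derivative of order $1-\xi$ reproduces exactly the normalizing factor $\Theta^{\xi}(x,a)$ defined in Theorem \ref{teo1}, since a sign or gamma-argument slip here would corrupt the whole correction term. Some care is also needed to ensure that the two applications of $\mathcal{D}^{\alpha}\mathcal{I}^{\alpha} = \mathrm{Id}$ are legitimate under the stated regularity that $f$ be differentiable and integrable.
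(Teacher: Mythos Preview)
Your proposal is correct and follows essentially the same route as the paper: both cancel the inner pair $\mathcal{D}_{a+}^{\nu(1-\mu);\rm\Psi}\mathcal{I}_{a+}^{\nu(1-\mu);\rm\Psi}$ to reduce the composite to $\mathcal{D}_{a+}^{1-\xi;\rm\Psi}I_{a+}^{1;\rm\Psi}\mathcal{D}_{a+}^{\xi;\rm\Psi}f$ and then isolate the boundary term $\mathcal{I}_{a+}^{1-\xi;\rm\Psi}f(a)$. The only cosmetic difference is that the paper factors $I_{a+}^{1;\rm\Psi}=\mathcal{I}_{a+}^{1-\xi;\rm\Psi}\mathcal{I}_{a+}^{\xi;\rm\Psi}$ via Lemma~\ref{LE} and then invokes Theorem~\ref{teo1} (with $\nu\to 0$) to supply $\mathcal{I}_{a+}^{\xi;\rm\Psi}\mathcal{D}_{a+}^{\xi;\rm\Psi}f=f-\Theta^{\xi}(\cdot,a)\,\mathcal{I}_{a+}^{1-\xi;\rm\Psi}f(a)$ before applying $\mathcal{D}_{a+}^{1-\xi;\rm\Psi}\mathcal{I}_{a+}^{1-\xi;\rm\Psi}=\mathrm{Id}$, whereas you obtain the same boundary contribution directly from the fundamental theorem of calculus and the power rule.
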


\begin{proof}
Using {\rm Theorem \ref{teorema2}} with $\nu \rightarrow 0$, we have
\begin{eqnarray}\label{X1}
\mathbf{I}_{a+}^{\mu ,\nu ;\rm\Psi }\text{ }^{H}\mathscr{D}_{a+}^{\mu ,\nu ;\rm\Psi }f\left( x\right) 
&=&\mathcal{D}_{a+}^{\left( 1-\nu \right) \left( 1-\mu \right) ;\rm\Psi }I_{a+}^{1;\rm\Psi }\left( \frac{1}{\rm\Psi ^{\prime }\left( x\right) }\frac{d}{dx} \right) \mathcal{I}_{a+}^{\left( 1-\nu \right) \left( 1-\mu \right) ;\rm\Psi
}f\left( x\right)   \notag \\
&=&\mathcal{D}_{a+}^{1-\xi ;\rm\Psi }\mathcal{I}_{a+}^{1-\xi;\rm\Psi }\mathcal{I}_{a+}^{\xi ;\rm\Psi }\mathcal{D}_{a+}^{\xi ;\rm\Psi }f\left( x\right).
\end{eqnarray}

Now, applying the limit $\nu \rightarrow 0$ on both sides of the {\rm Eq.(\ref{K2})}, we get
\begin{equation}\label{X2}
\mathcal{I}_{a+}^{\mu ;\rm\Psi }\mathcal{D}_{a+}^{\mu ;\rm\Psi }f\left( x\right) =f\left(
x\right) -\Theta^{\mu}(x,a)\mathcal{I}_{a+}^{1-\mu ;\rm\Psi }f\left(
a\right).
\end{equation}

Using {\rm Eq.(\ref{X1})}, {\rm Eq.(\ref{X2})} and {\rm Theorem \ref{teorema2}} with $\nu \rightarrow 0$, we conclude
\begin{eqnarray*}
\mathbf{I}_{a+}^{\mu ,\nu ;\rm\Psi }\text{ }^{H}\mathscr{D}_{a+}^{\mu ,\nu ;\rm\Psi }f\left( x\right) 
&=&\mathcal{D}_{a+}^{1-\xi ;\rm\Psi }\mathcal{I}_{a+}^{1-\xi ;\rm\Psi }\left( f\left( x\right) -\Theta^{\xi}(x,a)\mathcal{I}_{a+}^{1-\xi ;\rm\Psi }f\left( a\right)
\right)   \notag \\
&=&f\left( x\right) -\Theta^{\xi}(x,a)\mathcal{I}_{a+}^{1-\xi ;\rm\Psi
}f\left( a\right)
\end{eqnarray*}
which complete the proof.
\end{proof}

\begin{theorem}\label{teo312} Let $0<\mu<1$ and $0<\nu<1$. If $f$ is differentiable and integrable, then
\begin{eqnarray*}
^{H}\mathscr{D}_{a+}^{\mu ,\nu ;\rm\Psi }\text{ }\mathbf{I}_{a+}^{\mu ,\nu ;\rm\Psi }f\left( x\right)  &=&f\left( x\right) -\frac{\left( \rm\Psi \left( x\right)
-\rm\Psi \left( a\right) \right) ^{\nu \left( 1-\mu \right) -1}}{\Gamma
\left( \nu \left( 1-\mu \right) \right) }\mathcal{D}_{a+}^{\nu \left(
1-\mu \right) -1;\rm\Psi }f\left( a\right)   \notag \\
&&-\frac{\left( \rm\Psi \left( x\right) -\rm\Psi \left( a\right) \right) ^{-\mu
-1}}{\Gamma \left( -\mu \right) }\mathcal{D}_{a+}^{-\mu -1;\rm\Psi
}f\left( a\right) .
\end{eqnarray*}

\end{theorem}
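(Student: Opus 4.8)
The plan is to mirror the computation of Theorem~\ref{teo311}, but with the two operators composed in the opposite order, so that the mismatched integral and derivative orders now generate two boundary contributions instead of one. First I would pass to the Riemann--Liouville form of both operators with $\xi=\mu+\nu(1-\mu)$, writing
$^{H}\mathscr{D}_{a+}^{\mu,\nu;\rm\Psi}g=\mathcal{I}_{a+}^{\nu(1-\mu);\rm\Psi}\left(\frac{1}{\rm\Psi'(x)}\frac{d}{dx}\right)\mathcal{I}_{a+}^{(1-\nu)(1-\mu);\rm\Psi}g$
and
$\mathbf{I}_{a+}^{\mu,\nu;\rm\Psi}f=\mathcal{D}_{a+}^{(1-\nu)(1-\mu);\rm\Psi}I_{a+}^{1;\rm\Psi}\mathcal{D}_{a+}^{\nu(1-\mu);\rm\Psi}f$,
and then substitute the second into the first. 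Using $(1-\nu)(1-\mu)=1-\xi$ and $\nu(1-\mu)=\xi-\mu$ keeps every exponent transparent and reduces $^{H}\mathscr{D}_{a+}^{\mu,\nu;\rm\Psi}\,\mathbf{I}_{a+}^{\mu,\nu;\rm\Psi}f$ to $\mathcal{I}_{a+}^{\nu(1-\mu);\rm\Psi}\left(\frac{1}{\rm\Psi'(x)}\frac{d}{dx}\right)\mathcal{I}_{a+}^{1-\xi;\rm\Psi}\mathcal{D}_{a+}^{1-\xi;\rm\Psi}I_{a+}^{1;\rm\Psi}\mathcal{D}_{a+}^{\nu(1-\mu);\rm\Psi}f$.

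The central block $\mathcal{I}_{a+}^{1-\xi;\rm\Psi}\mathcal{D}_{a+}^{1-\xi;\rm\Psi}(\cdot)$ is not the identity: the general-order Riemann--Liouville identity $\mathcal{I}_{a+}^{\alpha;\rm\Psi}\mathcal{D}_{a+}^{\alpha;\rm\Psi}u=u-\Theta^{\alpha}(x,a)\,\mathcal{I}_{a+}^{1-\alpha;\rm\Psi}u(a)$ (the arbitrary-order version of Eq.(\ref{X2}) used in Theorem~\ref{teo311}), applied with $\alpha=1-\xi$, splits it into its principal part plus a remainder proportional to $\Theta^{1-\xi}(x,a)=(\rm\Psi(x)-\rm\Psi(a))^{-\xi}/\Gamma(1-\xi)$. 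Collapsing the principal part with $\left(\frac{1}{\rm\Psi'(x)}\frac{d}{dx}\right)I_{a+}^{1;\rm\Psi}=\mathrm{id}$ leaves $\mathcal{I}_{a+}^{\nu(1-\mu);\rm\Psi}\mathcal{D}_{a+}^{\nu(1-\mu);\rm\Psi}f$, and a second use of the same identity at order $\nu(1-\mu)$ turns this into $f(x)$ minus a term carrying $\Theta^{\nu(1-\mu)}(x,a)$; rewriting $\mathcal{I}_{a+}^{1-\nu(1-\mu);\rm\Psi}=\mathcal{D}_{a+}^{\nu(1-\mu)-1;\rm\Psi}$ identifies this with the first correction term of the statement. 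For the second correction term I must propagate the central remainder through the outer operators: applying $\frac{1}{\rm\Psi'(x)}\frac{d}{dx}$ to $(\rm\Psi(x)-\rm\Psi(a))^{-\xi}$ raises the power to $-\xi-1$, and integrating it by Lemma~\ref{lema2} shifts the exponent to $\nu(1-\mu)-\xi-1=-\mu-1$, while the Gamma factors telescope through $\Gamma(1-\xi)=-\xi\,\Gamma(-\xi)$ to leave exactly $(\rm\Psi(x)-\rm\Psi(a))^{-\mu-1}/\Gamma(-\mu)$, multiplied by the boundary value I identify (via Lemma~\ref{LE}) with $\mathcal{D}_{a+}^{-\mu-1;\rm\Psi}f(a)=\mathcal{I}_{a+}^{\mu+1;\rm\Psi}f(a)$.

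The step I expect to be the main obstacle is precisely this bookkeeping of the two boundary contributions. Because the adjacent integral and derivative orders do not coincide, the naive semigroup rule fails and one must invoke the general composition identity and keep its $x$--dependent singular remainder; the delicate point is to carry that remainder through the outer $\frac{1}{\rm\Psi'(x)}\frac{d}{dx}$ and $\mathcal{I}_{a+}^{\nu(1-\mu);\rm\Psi}$ without discarding it, and to verify that the Gamma-function coefficients combine into the clean exponents $\nu(1-\mu)-1$ and $-\mu-1$ displayed in the statement. Everything else is the routine power-function computation of Lemma~\ref{lema2} together with the semigroup property of Lemma~\ref{LE}.
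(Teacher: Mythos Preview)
Your proposal is correct and follows essentially the same route as the paper: expand the composition into $\mathcal{I}_{a+}^{\nu(1-\mu);\rm\Psi}\bigl(\tfrac{1}{\rm\Psi'(x)}\tfrac{d}{dx}\bigr)\mathcal{I}_{a+}^{1-\xi;\rm\Psi}\mathcal{D}_{a+}^{1-\xi;\rm\Psi}I_{a+}^{1;\rm\Psi}\mathcal{D}_{a+}^{\nu(1-\mu);\rm\Psi}f$, apply the Riemann--Liouville identity $\mathcal{I}^{\alpha}\mathcal{D}^{\alpha}u=u-\Theta^{\alpha}(x,a)\,\mathcal{I}^{1-\alpha}u(a)$ first at order $1-\xi$ and then at order $\nu(1-\mu)$, and propagate the resulting boundary remainder through the outer operators using Lemma~\ref{lema2}. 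Your bookkeeping of the Gamma factors and the identification $\mathcal{I}_{a+}^{\xi+1;\rm\Psi}\mathcal{D}_{a+}^{\nu(1-\mu);\rm\Psi}f(a)=\mathcal{D}_{a+}^{-\mu-1;\rm\Psi}f(a)$ is exactly what the paper does, only spelled out more explicitly.
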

\begin{proof}
In fact, making use of Eq.(\ref{HIL}) and applying the limit $\nu\rightarrow 0$ on both sides of {\rm Eq.(\ref{K2})}, we get
\begin{eqnarray*}
&&^{H}\mathscr{D}_{a+}^{\mu ,\nu ;\rm\Psi } \text{ }\mathbf{I}_{a+}^{\mu ,\nu ;\rm\Psi
}f\left( x\right)  \\
&=&\mathcal{I}_{a+}^{\nu \left( 1-\mu \right) ;\rm\Psi }\left( \frac{1}{\rm\Psi
^{\prime }\left( x\right) }\frac{d}{dx}\right) \mathcal{I}_{a+}^{1-\xi ;\rm\Psi }%
\mathcal{D}_{a+}^{1-\xi ;\rm\Psi }I_{a+}^{1;\rm\Psi }\mathcal{D}_{a+}^{\nu
\left( 1-\mu \right) ;\rm\Psi }f\left( x\right)  \\
&=&\mathcal{I}_{a+}^{\nu \left( 1-\mu \right) ;\rm\Psi }\mathcal{D}_{a+}^{\nu
\left( 1-\mu \right) ;\rm\Psi }f\left( x\right) -\mathcal{I}_{a+}^{\nu \left( 1-\mu \right) ;\rm\Psi }\left( \frac{1}{\rm\Psi
^{\prime }\left( x\right) }\frac{d}{dx}\right) \frac{\left( \rm\Psi \left(
x\right) -\rm\Psi \left( a\right) \right) ^{-\xi }}{\Gamma \left( 1-\xi
\right) }\mathcal{I}_{a+}^{\xi +1;\rm\Psi }\mathcal{D}_{a+}^{\nu \left( 1-\mu
\right) ;\rm\Psi }f\left( a\right)  \\
&=&f\left( x\right) -\frac{\left( \rm\Psi \left( x\right) -\rm\Psi \left( a\right)
\right) ^{\nu \left( 1-\mu \right) -1}}{\Gamma \left( \nu \left(
1-\mu \right) \right) }\mathcal{I}_{a+}^{1-\nu \left( 1-\mu \right) ;\rm\Psi
}f\left( a\right)  \\
&&-\mathcal{I}_{a+}^{\nu \left( 1-\mu \right) ;\rm\Psi }\left( \frac{\left( \rm\Psi
\left( x\right) -\rm\Psi \left( a\right) \right) ^{-\xi -1}}{\Gamma \left(
-\xi \right) }\mathcal{I}_{a+}^{\xi +1;\rm\Psi }\mathcal{D}_{a+}^{\nu \left(
1-\mu \right) ;\rm\Psi }f\left( a\right) \right) \\
&=&f\left( x\right) -\frac{\left( \rm\Psi \left( x\right) -\rm\Psi \left( a\right)
\right) ^{\nu \left( 1-\mu \right) -1}}{\Gamma \left( \nu \left(
1-\mu \right) \right) }\mathcal{I}_{a+}^{1-\nu \left( 1-\mu \right) ;\rm\Psi
}f\left( a\right) -\frac{\left( \rm\Psi \left( x\right) -\rm\Psi \left( a\right) \right) ^{-\mu
-1}}{\Gamma \left( -\mu \right) }\mathcal{I}_{a+}^{\xi +1;\rm\Psi }\mathcal{D}%
_{a+}^{\nu \left( 1-\mu \right) ;\rm\Psi }f\left( a\right)   \notag \\
&=&f\left( x\right) -\frac{\left( \rm\Psi \left( x\right) -\rm\Psi \left( a\right)
\right) ^{\nu \left( 1-\mu \right) -1}}{\Gamma \left( \nu \left(1-\mu \right) \right) }\mathcal{D}_{a+}^{\nu \left( 1-\mu \right)
-1;\rm\Psi }f\left( a\right) -\frac{\left( \rm\Psi \left( x\right) -\rm\Psi \left( a\right) \right) ^{-\mu-1}}{\Gamma \left( -\mu \right) }\mathcal{D}_{a+}^{-\mu -1;\rm\Psi
}f\left( a\right) ,
\end{eqnarray*}
which complete the proof.
\end{proof}

\begin{theorem}\textbf{ Given $0<\mu <1$ and $0\leq \nu \leq 1$. Consider $D^{1;\rm\Psi }\left( \cdot \right) =\left( \dfrac{1}{\rm\Psi ^{\prime }\left( x\right) }\dfrac{d}{dx}\right) \left( \cdot \right) $ and $I_{a+}^{1;\rm\Psi }\left( \cdot \right) $ the derivative and integral of integer order with respect to function $\rm\Psi \left( \cdot \right) $, respectively. Then, we have}
\begin{enumerate}
\item $\mathbf{I}_{a+}^{\mu ,\nu ;\rm\Psi }I_{a+}^{1;\rm\Psi }f\left( x\right) =$ $\mathcal{I}_{a+}^{1+\mu ;\rm\Psi }f\left( x\right)$;

\item $\mathbf{I}_{a+}^{\mu ,\nu ;\rm\Psi }D_{a+}^{1;\rm\Psi }f\left( x\right)=D_{a+}^{1-\mu ;\rm\Psi }f\left( x\right)$.
\end{enumerate}
\end{theorem}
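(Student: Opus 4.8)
The plan is to rewrite both compositions in the reduced form $\mathbf{I}_{a+}^{\mu ,\nu ;\rm\Psi }=\mathcal{D}_{a+}^{1-\xi ;\rm\Psi }I_{a+}^{1;\rm\Psi }\mathcal{D}_{a+}^{\xi -\mu ;\rm\Psi }$ with $\xi =\mu +\nu (1-\mu )$, and then to collapse the resulting chains of $\rm\Psi$-RL operators using three tools already available in the excerpt: the semigroup law $\mathcal{I}_{a+}^{\alpha ;\rm\Psi }\mathcal{I}_{a+}^{\beta ;\rm\Psi }=\mathcal{I}_{a+}^{\alpha +\beta ;\rm\Psi }$ (Lemma \ref{LE}), the left-inverse identity $\mathcal{D}_{a+}^{\alpha ;\rm\Psi }\mathcal{I}_{a+}^{\alpha ;\rm\Psi }=\mathrm{Id}$ (Theorem \ref{teorema2} with $\nu \rightarrow 0$, exactly as used in Theorem \ref{teo311}), and the elementary identity $I_{a+}^{1;\rm\Psi }=\mathcal{I}_{a+}^{1;\rm\Psi }$ obtained by setting $\mu =1$ in Eq.(\ref{P1}). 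Combining the first two yields the mixed rule $\mathcal{D}_{a+}^{\alpha ;\rm\Psi }\mathcal{I}_{a+}^{\beta ;\rm\Psi }=\mathcal{I}_{a+}^{\beta -\alpha ;\rm\Psi }$ whenever $0\le \alpha \le \beta $ with $\alpha <1$, and this mixed rule is what drives the telescoping in both items.

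For item (1) I would first compute $\mathcal{D}_{a+}^{\xi -\mu ;\rm\Psi }I_{a+}^{1;\rm\Psi }f=\mathcal{I}_{a+}^{1-\xi +\mu ;\rm\Psi }f$ by the mixed rule, noting $\xi -\mu =\nu (1-\mu )\in [0,1)$. Rewriting the outer $I_{a+}^{1;\rm\Psi }$ as $\mathcal{I}_{a+}^{1;\rm\Psi }$ and applying the semigroup law turns this into $\mathcal{I}_{a+}^{2-\xi +\mu ;\rm\Psi }f$, and a final application of $\mathcal{D}_{a+}^{1-\xi ;\rm\Psi }$ (again via the mixed rule, since $1-\xi =(1-\nu )(1-\mu )\in [0,1)$) produces $\mathcal{I}_{a+}^{1+\mu ;\rm\Psi }f$, as claimed. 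This part is clean because every intermediate order stays positive, so no boundary contribution arises.

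For item (2) the same bookkeeping applies, but now a genuine derivative sits at the end of the chain. I would write $\mathcal{D}_{a+}^{\xi -\mu ;\rm\Psi }D^{1;\rm\Psi }=D^{1;\rm\Psi }\mathcal{I}_{a+}^{1-\xi +\mu ;\rm\Psi }D^{1;\rm\Psi }$ from the definition of the RL derivative, then use the fundamental theorem $I_{a+}^{1;\rm\Psi }D^{1;\rm\Psi }g=g-g(a)$ with $g=\mathcal{I}_{a+}^{1-\xi +\mu ;\rm\Psi }D^{1;\rm\Psi }f$; since $1-\xi +\mu =1-\nu (1-\mu )>0$, the boundary value $g(a)$ vanishes and the chain collapses, via the mixed rule with $\alpha =1-\xi $, to $\mathcal{D}_{a+}^{1-\xi ;\rm\Psi }\mathcal{I}_{a+}^{1-\xi +\mu ;\rm\Psi }D^{1;\rm\Psi }f=\mathcal{I}_{a+}^{\mu ;\rm\Psi }D^{1;\rm\Psi }f$. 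The last step is to identify $\mathcal{I}_{a+}^{\mu ;\rm\Psi }D^{1;\rm\Psi }f$ with $\mathcal{D}_{a+}^{1-\mu ;\rm\Psi }f=D^{1;\rm\Psi }\mathcal{I}_{a+}^{\mu ;\rm\Psi }f$.

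The main obstacle is precisely this last identification: commuting $\mathcal{I}_{a+}^{\mu ;\rm\Psi }$ past $D^{1;\rm\Psi }$ generates the integration-by-parts correction $-\dfrac{(\rm\Psi (x)-\rm\Psi (a))^{\mu -1}}{\Gamma (\mu )}f(a)$, of exactly the kind already seen in Theorems \ref{teo311} and \ref{teo312}. Hence the clean identity $\mathbf{I}_{a+}^{\mu ,\nu ;\rm\Psi }D_{a+}^{1;\rm\Psi }f=D_{a+}^{1-\mu ;\rm\Psi }f$ holds as written only under the normalization $f(a)=0$; otherwise that boundary term must be carried along. I would therefore either impose $f(a)=0$ or record the extra term explicitly, while the remainder of the argument is the routine operator telescoping described above.
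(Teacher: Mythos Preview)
Your approach matches the paper's: both arguments collapse the operator chain $\mathcal{D}_{a+}^{1-\xi;\rm\Psi}\,I_{a+}^{1;\rm\Psi}\,\mathcal{D}_{a+}^{\xi-\mu;\rm\Psi}$ applied to $I_{a+}^{1;\rm\Psi}f$ or $D_{a+}^{1;\rm\Psi}f$ via index arithmetic (the paper literally writes ``using the subtraction on indexes'' and carries out the bookkeeping formally in three lines for each item). You are in fact more careful than the paper, which never addresses the boundary correction you flag in item (2); it simply treats every composition, including $\mathcal{I}_{a+}^{\mu;\rm\Psi}D^{1;\rm\Psi}=\mathcal{D}_{a+}^{1-\mu;\rm\Psi}$, as another instance of formal index subtraction without isolating the $f(a)$ term.
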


\begin{proof} \textbf{1. In fact, using the subtraction on indexes, we have}
\begin{eqnarray*}
\mathbf{I}_{a+}^{\mu ,\nu ;\rm\Psi }I_{a+}^{1;\rm\Psi }f\left( x\right) 
&=&\mathcal{D}_{a+}^{\left( 1-\nu \right) \left( 1-\mu \right) ;\rm\Psi
}I_{a+}^{1;\rm\Psi }\mathcal{D}_{a+}^{\nu \left( 1-\mu \right) ;\rm\Psi }I_{a+}^{1;\rm\Psi
}f\left( x\right)   \notag \\
&=&\mathcal{D}_{a+}^{\left( 1-\nu \right) \left( 1-\mu \right) ;\rm\Psi
}I_{a+}^{1;\rm\Psi }\mathcal{D}_{a+}^{1-\nu \left( 1-\mu \right) ;\rm\Psi }f\left(
x\right)   \notag \\
&=&\mathcal{I}_{a+}^{2-\nu \left( 1-\mu \right) -\left( 1-\nu \right) \left(
1-\mu \right) ;\rm\Psi }f\left( x\right)   \notag \\
&=&\mathcal{I}_{a+}^{1+\mu ;\rm\Psi }f\left( x\right) .
\end{eqnarray*}

\textbf{2. Again, by subtraction on indexes, we have}
\begin{eqnarray*}
\mathbf{I}_{a+}^{\mu ,\nu ;\rm\Psi }D_{a+}^{1;\rm\Psi }f\left( x\right) 
&=&\mathcal{D}_{a+}^{\left( 1-\nu \right) \left( 1-\mu \right) ;\rm\Psi
}I_{a+}^{1;\rm\Psi }\mathcal{D}_{a+}^{\nu \left( 1-\mu \right) ;\rm\Psi }D_{a+}^{1;\rm\Psi
}f\left( x\right)   \notag \\
&=&\mathcal{D}_{a+}^{\left( 1-\nu \right) \left( 1-\mu \right) ;\rm\Psi
}I_{a+}^{1;\rm\Psi }\mathcal{D}_{a+}^{1+\nu \left( 1-\mu \right) ;\rm\Psi }f\left(
x\right)   \notag \\
&=&\mathcal{I}_{a+}^{\nu \left( 1-\mu \right) -\left( 1-\nu \right) \left(
1-\mu \right) ;\rm\Psi }f\left( x\right)   \notag \\
&=&\mathcal{D}_{a+}^{1-\mu ;\rm\Psi }f\left( x\right) .
\end{eqnarray*}
\end{proof}

\begin{theorem} \textbf{Given $0<\mu <1,$ $0\leq \nu \leq 1$ and $\ f\in C\left(\Delta,\mathbb{R}\right) $ and $\rm\Psi \in C^{1}\left(\Delta,\mathbb{R}\right)$ such that $\rm\Psi ^{\prime }\left( x\right)\neq 0$ $\forall x\in \Delta$. Then, we have}
\begin{equation*}
\mathbf{I}_{a+}^{\mu ,\nu ;\rm\Psi }\left( \rm\Psi \left( x\right) f\left( x\right)
\right) =\rm\Psi \left( t\right) \mathcal{I}_{a+}^{\mu ;\rm\Psi }f\left( x\right)
-\mathcal{I}_{a+}^{\mu ;\rm\Psi }f\left( x\right) .
\end{equation*}
\end{theorem}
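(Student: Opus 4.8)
The plan is to collapse the composite operator $\mathbf{I}_{a+}^{\mu,\nu;\Psi}$ into a single Riemann--Liouville integral and then to perform an elementary splitting of the resulting kernel. Writing the definition in its contracted form $\mathbf{I}_{a+}^{\mu,\nu;\Psi}=\mathcal{D}_{a+}^{1-\xi;\Psi}\,I_{a+}^{1;\Psi}\,\mathcal{D}_{a+}^{\xi-\mu;\Psi}$ with $\xi=\mu+\nu(1-\mu)$, I would apply the same index-subtraction already used in the previous theorem: counting orders, $(\xi-1)+1+(\mu-\xi)=\mu$, so that on a continuous argument $g$ one has $\mathbf{I}_{a+}^{\mu,\nu;\Psi}g(x)=\mathcal{I}_{a+}^{\mu;\Psi}g(x)$, the semigroup law of Lemma \ref{LE} together with the relations {\rm Eq.(\ref{D2})}--{\rm Eq.(\ref{P3})} reducing every $\mathcal{D}\mathcal{I}$ block to a net integral. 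Taking $g=\Psi f$ then reduces the whole claim to evaluating $\mathcal{I}_{a+}^{\mu;\Psi}(\Psi f)(x)$.

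Next I would insert the explicit kernel of {\rm Eq.(\ref{P1})}, namely $\mathbf{M}^{\mu}_{\Psi}(x,t)=\Psi'(t)(\Psi(x)-\Psi(t))^{\mu-1}$, and use the algebraic identity $\Psi(t)=\Psi(x)-(\Psi(x)-\Psi(t))$ inside the integrand. This splits the single integral into two pieces. In the first, the factor $\Psi(x)$ is constant in $t$ and pulls out, leaving exactly $\Psi(x)\,\mathcal{I}_{a+}^{\mu;\Psi}f(x)$. In the second, the extra factor $(\Psi(x)-\Psi(t))$ raises the exponent of the kernel by one, producing $\tfrac{1}{\Gamma(\mu)}\int_a^x\Psi'(t)(\Psi(x)-\Psi(t))^{\mu}f(t)\,dt$; normalising by $\Gamma(\mu+1)/\Gamma(\mu)=\mu$ identifies this term with $\mu\,\mathcal{I}_{a+}^{\mu+1;\Psi}f(x)$. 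Collecting the two pieces yields $\mathbf{I}_{a+}^{\mu,\nu;\Psi}(\Psi f)(x)=\Psi(x)\,\mathcal{I}_{a+}^{\mu;\Psi}f(x)-\mu\,\mathcal{I}_{a+}^{\mu+1;\Psi}f(x)$, which is the content of the statement once the right-hand side is read in corrected form.

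The main obstacle is the first step. The collapse of $\mathbf{I}_{a+}^{\mu,\nu;\Psi}$ to $\mathcal{I}_{a+}^{\mu;\Psi}$ is only a formal order count: the fractional derivative and integral do not commute freely, so a rigorous argument must verify that the boundary contributions generated when moving $\mathcal{D}_{a+}^{1-\xi;\Psi}$ past $I_{a+}^{1;\Psi}\mathcal{D}_{a+}^{\xi-\mu;\Psi}$ indeed vanish for $\Psi f\in C(\Delta,\mathbb{R})$, equivalently that the relevant weighted limits at $t=a$ are zero. I would handle this exactly as in Theorem \ref{teo311}, where the block $\mathcal{D}^{1-\xi;\Psi}\mathcal{I}^{1-\xi;\Psi}$ was simplified and the term $\Theta^{\xi}(x,a)\,\mathcal{I}_{a+}^{1-\xi;\Psi}(\cdot)$ was tracked, here checking that it contributes nothing. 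A secondary, purely cosmetic difficulty is that the displayed right-hand side carries typos (the stray $\Psi(t)$ in place of $\Psi(x)$, and the second summand written as $\mathcal{I}_{a+}^{\mu;\Psi}f$ rather than $\mu\,\mathcal{I}_{a+}^{\mu+1;\Psi}f$); the computation above fixes both the constant and the order.
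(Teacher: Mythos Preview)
Your approach is essentially the paper's: both rest on the formal index subtraction that collapses the composite operator and on the kernel splitting $\Psi(t)=\Psi(x)-(\Psi(x)-\Psi(t))$ to obtain the product rule {\rm Eq.(\ref{jp})}. The only difference is ordering: you collapse $\mathbf{I}_{a+}^{\mu,\nu;\Psi}$ fully to $\mathcal{I}_{a+}^{\mu;\Psi}$ and apply the splitting once, whereas the paper first proves {\rm Eq.(\ref{jp})}, collapses only the inner block $I_{a+}^{1;\Psi}\mathcal{D}_{a+}^{\nu(1-\mu);\Psi}=\mathcal{I}_{a+}^{1-\nu(1-\mu);\Psi}$, applies {\rm Eq.(\ref{jp})} at that order, and then applies it again (formally, with negative order) when pushing through the outer $\mathcal{D}_{a+}^{(1-\nu)(1-\mu);\Psi}$, so that the coefficients $(1-\nu)(1-\mu)$ and $-(1-\nu(1-\mu))$ combine to $-\mu$. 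Your diagnosis of the typos is also correct: the computation in either route yields $\Psi(x)\,\mathcal{I}_{a+}^{\mu;\Psi}f(x)-\mu\,\mathcal{I}_{a+}^{\mu+1;\Psi}f(x)$, and the paper's own intermediate line {\rm Eq.(\ref{jp})} already carries the order slip you flagged.
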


\begin{proof} \textbf{First, note that}
\begin{eqnarray}\label{jp}
\mathcal{I}_{a+}^{\mu ;\rm\Psi }\left( \rm\Psi \left( x\right) f\left( x\right) \right) 
&=&\frac{1}{\Gamma \left( \mu \right) }\int_{a}^{x}\mathbf{M}^{\mu}_{\xi}(x,t)\left( \rm\Psi \left( x\right) -\left( \rm\Psi \left( x\right) -\rm\Psi \left(
t\right) \right) \right) f\left( t\right) dt  \notag \\
&=&\rm\Psi \left( t\right) \frac{1}{\Gamma \left( \mu \right) }%
\int_{a}^{x}\mathbf{M}^{\mu}_{\xi}(x,t)f\left(t\right) dt-\frac{1}{\Gamma \left( \mu +1\right) }\int_{a}^{x}\mathbf{M}^{\mu}_{\xi}(x,t)f\left(
t\right) dt  \notag \\
&=&\rm\Psi \left( x\right) \mathcal{I}_{a+}^{\mu ;\rm\Psi }f\left( x\right) -\mu
\mathcal{I}_{a+}^{\mu ;\rm\Psi }f\left( x\right) .
\end{eqnarray}

\textbf{By means of Eq.(\ref{jp}), we have}
\begin{eqnarray*}
&&\mathbf{I}_{a+}^{\mu ,\nu ;\rm\Psi }\left( \rm\Psi \left( x\right) f\left( x\right)
\right)   \notag \\
&=&\mathcal{D}_{a+}^{\left( 1-\nu \right) \left( 1-\mu \right) ;\rm\Psi
}I_{a+}^{1;\rm\Psi }\mathcal{D}_{a+}^{\nu \left( 1-\mu \right) ;\rm\Psi }\left( \rm\Psi
\left( x\right) f\left( x\right) \right)   \notag \\
&=&\mathcal{D}_{a+}^{\left( 1-\nu \right) \left( 1-\mu \right) ;\rm\Psi
}\mathcal{I}_{a+}^{1-\nu \left( 1-\mu \right) ;\rm\Psi }\left( \rm\Psi \left( x\right)
f\left( x\right) \right)   \notag \\
&=&\mathcal{D}_{a+}^{\left( 1-\nu \right) \left( 1-\mu \right) ;\rm\Psi }\left( \rm\Psi
\left( t\right) \mathcal{I}_{a+}^{1-\nu \left( 1-\mu \right) ;\rm\Psi }f\left(
x\right) -\left( 1-\nu \left( 1-\mu \right) \right) \mathcal{I}_{a+}^{1-\nu
\left( 1-\mu \right) ;\rm\Psi }f\left( x\right) \right)   \notag \\
&=&\mathcal{I}_{a+}^{-\left( 1-\nu \right) \left( 1-\mu \right) ;\rm\Psi }\left[
\rm\Psi \left( t\right) \mathcal{I}_{a+}^{1-\nu \left( 1-\mu \right) ;\rm\Psi }f\left(
x\right) \right] -\mathcal{I}_{a+}^{-\left( 1-\nu \right) \left( 1-\mu \right)
;\rm\Psi }\left[ \left( 1-\nu \left( 1-\mu \right) \right) \mathcal{I}_{a+}^{1-\nu
\left( 1-\mu \right) ;\rm\Psi }f\left( x\right) \right]   \notag \\
&=&\rm\Psi \left( t\right) \mathcal{I}_{a+}^{\mu ;\rm\Psi }f\left( v\right) +\left(
\left( 1-\nu \right) \left( 1-\mu \right) \right) \mathcal{I}_{a+}^{\mu ;\rm\Psi
}f\left( x\right) -\left( 1-\nu \left( 1-\mu \right) \right)
\mathcal{I}_{a+}^{\mu ;\rm\Psi }f\left( x\right)   \notag \\
&=&\rm\Psi \left( t\right) \mathcal{I}_{a+}^{\mu ;\rm\Psi }f\left( x\right)
-\mu \mathcal{I}_{a+}^{\mu ;\rm\Psi }f\left(x\right) .
\end{eqnarray*}
\end{proof}


\section{Examples}

\textbf{As the research studies advance, particularly in the fractional calculus, the functions of ${\rm M-L}$ have been shown to be important and efficient in the study of fractional differential equations. As the exponential function is a solution of linear differential equations with constant coefficients, the ${\rm M-L}$ function is a solution of linear fractional differential equations with constant coefficients. And, therefore, in this sense the ${\rm M-L}$ function can be interpreted as a generalization of the exponential function \cite{ML}.}

\textbf{The ${\rm M-L}$ functions of one and two parameters and some of their properties and relations between them, are presented. Since the study and properties related to the ${\rm M-L}$ functions are broad, we suggest with a more detailed reading the book \cite{ML}.}

\textbf{The purpose of this section, is present some examples by using the $\rm\Psi-$fractional integration operator and the ${\rm \Psi-H}$ fractional differentiation operator involving the one-parameter ${\rm M-L}$ function and the function $\left( \rm\Psi \left( x\right) -\rm\Psi \left( a\right) \right)^{\delta -1}$ and we also present some graphs.}

\textbf{Historically, the gamma function, denoted by $\Gamma(z)$ as presented by Euler, was introduced as a generalization of the concept of factorial, and can be done as an improper integral}
\begin{equation}
\Gamma \left( z\right) =\int_{0}^{\infty }e^{-t}t^{z-1}dt, 
\end{equation}
\textbf{$Re\left( z\right) >0$, with $Re(z)\geq 1$. Considering $\mu=n=0,1,2,...$, we have $\Gamma(n+1)=n!$.}

\textbf{The ${\rm M-L}$ function (one parameter), was introduced by Mittag-Leffler in 1903 \cite{ML1} and is given by the series \cite{ML}}
\begin{equation}\label{A1}
\mathbf{E}_{\xi }\left( z\right) =\overset{\infty }{\underset{k=0}{\sum }}\frac{z^{k}}{\Gamma \left( \xi k+1\right) },
\end{equation}
\textbf{with $\xi \in \mathbb{C}$ and $ {Re}\left( \xi \right) >0$.}

\textbf{Two years after Mittag-Leffler introduced the ${\rm M-L}$ function of a parameter, in 1905 Wiman \cite{ML2}, introduced a generalization, the so-called two parameters ${\rm M-L}$ function, given by the series \cite{ML}}
\begin{equation}\label{A2}
\mathbf{E}_{\xi ,\nu }\left( z\right) =\overset{\infty }{\underset{k=0}{\sum }}%
\frac{z^{k}}{\Gamma \left( \xi k+\nu \right) },
\end{equation}
\textbf{with $\xi ,\nu \in \mathbb{C}$, ${Re}\left( \xi \right) >0$ and ${Re}\left( \nu\right) >0$.}

\textbf{In particular for $\nu=1$, we have $\mathbf{E}_{\xi ,1}\left( z\right) =\mathbf{E}_{\xi }\left( z\right) $, i.e; the Eq.(\ref{A1}). On the order hand, applying $\nu=\xi=1$, we have $\mathbf{E}_{1,1}\left(z\right) =e^{z}$.}

\textbf{Because it is a generalization of the exponential function, the trigonometric and hyperbolic sine and cosine functions are particular cases of the ${\rm M-L}$ function of two parameters. In fact, we have some cases \cite{ML}:}
\begin{enumerate}
\item $\mathbf{E}_{2,1}(z^{2})={\rm cosh(z)}$;

\item $\mathbf{E}_{2,2}(z^{2})={\rm \dfrac{senh(z)}{z}} $;

\item $\mathbf{E}_{2,1}(-z^{2})={\rm cos(z)}$;

\item $\mathbf{E}_{2,2}(-z^{2})={\rm \dfrac{sen(z)}{z}} $;
\end{enumerate}

\textbf{By the definition of the ${\rm M-L}$ function of one parameter, we have}
\begin{equation}
\mathbf{E}_{0}\left( z\right) =\overset{\infty }{\underset{k=0}{\sum }}\frac{z^{k}}{\Gamma \left( 0 k+1\right) }=\overset{\infty }{\underset{k=0}{\sum }}z^{k}.
\end{equation}

\textbf{We note that the series $\displaystyle\overset{\infty }{\underset{k=0}{\sum }}z^{k}$ is a geometric series whose sum is $\dfrac{1}{1-z}$, for $\vert z \vert <1$. Then, for $\vert z \vert <1$, we have $\mathbf{E}_{0}\left( z\right)=\dfrac{1}{1-z}$.}

\textbf{In addition to the comments done above on ${\rm M-L}$ of one and two parameters functions, there are numerous important and interesting properties. In this sense, some properties are presented below.}

\textbf{Let $Re(\mu)>0$, $Re(\nu)>0$ and $t\in \mathbb{C}$. Then, the following properties are true \cite{ML}:}
\begin{enumerate}
\item $\mathbf{E}_{\mu,\nu}(z)=z \mathbf{E}_{\mu,\mu+\nu}(v)+\dfrac{1}{\xi(z)}$;

\item $\mathbf{E}_{\mu,\nu}(z)+ \mathbf{E}_{\mu,\nu}(-z)=2 \mathbf{E}_{\mu,\nu}(z^2)$;

\item $\mathbf{E}_{\mu,\nu}(z)= \nu \mathbf{E}_{\mu,\nu+1}(z)+\mu z \dfrac{1}{dz}\mathbf{E}_{\mu,\nu+1}(z^2)$.
\end{enumerate}

\textbf{The first example to be investigated by means of the $\rm\Psi-$fractional integral, involves the function $\left( \rm\Psi \left( x\right) -\rm\Psi \left( a\right) \right) ^{\delta -1}$, in which the behavior of the calculation involving the function presents an interesting behavior in the graph according to the choice of the function $\rm\Psi(\cdot)$ and the parameters $a$, $\nu$ and $\delta$.}

\begin{lemma}\label{lema4} Let $\delta\in\mathbb{R}$ and $\left( \rm\Psi \left( x\right) -\rm\Psi \left( a\right) \right) ^{\delta -1}$, where  $\delta>0$. Then, for $0<\mu<1$ and $0<\nu<1$, we have
\begin{equation*}
\mathbf{I}_{a+}^{\mu ,\nu ;\rm\Psi }\left( \rm\Psi \left( x\right) -\rm\Psi \left(
a\right) \right) ^{\delta -1}=M^{\mu}_{\delta,\nu}\left( \rm\Psi \left( x\right) -\rm\Psi \left( a\right) \right) ^{\delta -\mu +1},
\end{equation*}	
$M^{\mu}_{\delta,\nu}=\dfrac{\Gamma \left( \delta \right) \Gamma
\left( \delta +\nu \left( 1-\mu \right) \right) }{\Gamma \left( \delta
-\nu \left( 1-\mu \right) \right) \Gamma \left( \delta +2\nu \left(
1-\mu \right) +\mu \right) }$.
\end{lemma}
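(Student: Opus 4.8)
The plan is to unfold the definition and reduce the entire computation to repeated use of the two power-function identities already at hand: Lemma \ref{lema2} for the $\Psi$-integral and Lemma \ref{lema3} for the fractional derivative, the latter taken at $\nu=0$, where the $\Psi$-H derivative $^{H}\mathscr{D}_{a+}^{\mu,0;\Psi}$ collapses to the $\Psi$-RL derivative $\mathcal{D}_{a+}^{\mu;\Psi}$. To lighten the bookkeeping I would abbreviate $\alpha=\nu(1-\mu)$ and $\beta=(1-\nu)(1-\mu)$, so that $\alpha+\beta=1-\mu$, $\xi=\mu+\alpha$ and $\beta=1-\xi$. With this notation Eq.(\ref{jose}) reads $\mathbf{I}_{a+}^{\mu,\nu;\Psi}f=\mathcal{D}_{a+}^{\beta;\Psi}\,I_{a+}^{1;\Psi}\,\mathcal{D}_{a+}^{\alpha;\Psi}f$, and the goal is to show that this operator sends $(\Psi(x)-\Psi(a))^{\delta-1}$ to a single power of $\Psi(x)-\Psi(a)$ multiplied by an explicit ratio of Gamma factors.

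Then I would carry out the composition from the inside out, one operator at a time. First, applying $\mathcal{D}_{a+}^{\alpha;\Psi}$ to $(\Psi(x)-\Psi(a))^{\delta-1}$ via Lemma \ref{lema3} produces a constant multiple of $(\Psi(x)-\Psi(a))^{\delta-\alpha-1}$. Next I would hit this with the integer-order operator $I_{a+}^{1;\Psi}$; since $I_{a+}^{1;\Psi}=\mathcal{I}_{a+}^{1;\Psi}$, this is exactly the case $\mu=1$ of Lemma \ref{lema2}, which raises the exponent by one and multiplies by the corresponding Gamma ratio. Finally I would apply the outer derivative $\mathcal{D}_{a+}^{\beta;\Psi}$ through Lemma \ref{lema3} once more. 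At each stage the output is again a pure power of $\Psi(x)-\Psi(a)$, so the three identities chain together cleanly and no genuine integral ever has to be evaluated by hand.

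The remaining work is purely algebraic: collect the three Gamma ratios into one coefficient and simplify the accumulated exponent, using $\Gamma(z+1)=z\Gamma(z)$ to absorb the factor produced by $I_{a+}^{1;\Psi}$ and then re-expressing $\alpha,\beta$ through $\mu,\nu$. I expect this consolidation to be the main obstacle, and the place where the explicit form of $M^{\mu}_{\delta,\nu}$ must be made to emerge: because fractional derivatives and integrals do not commute, the order of the three operators has to be respected throughout, and at each composition one must check that the relevant power-rule identity is actually applicable, i.e. that the intermediate exponents built from $\delta$, $\delta-\alpha$ and $\delta-\alpha+1$ keep every $\mathcal{D}_{a+}^{\cdot;\Psi}$ and $\mathcal{I}_{a+}^{\cdot;\Psi}$ well defined on the weighted space. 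Tracking these Gamma factors carefully---rather than just the final power---is the real content of the computation, and it is precisely here that the coefficient $M^{\mu}_{\delta,\nu}$ and the exponent appearing in the statement must be read off.
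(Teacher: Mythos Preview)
Your proposal is correct and follows essentially the same route as the paper: introduce the abbreviations $A=(1-\nu)(1-\mu)$, $B=\nu(1-\mu)$ (your $\beta,\alpha$), apply Lemma~\ref{lema3} with $\nu\to 0$, then Lemma~\ref{lema2} with $\mu=1$, then Lemma~\ref{lema3} again, and collect the three Gamma ratios. The only difference is cosmetic---the paper does not pause to note $\alpha+\beta=1-\mu$ or to check applicability of the power rules at each step---so your write-up is, if anything, slightly more careful than the original.
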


\begin{proof} To facilitate the development of the calculation we consider the parameters $A=(1-\nu)(1-\mu)$ and $B=\nu(1-\mu)$. Using {\rm Lemma \ref{lema3}} with $\nu\rightarrow 0$ and {\rm Lemma \ref{lema2}}, we have 
\begin{eqnarray*}
\mathbf{I}_{a+}^{\mu ,\nu ;\rm\Psi }\left( \rm\Psi \left( x\right) -\rm\Psi
\left( a\right) \right) ^{\delta -1} &=&\mathcal{D}_{a+}^{A;\rm\Psi }I_{a+}^{1;\rm\Psi }\mathcal{D}_{a+}^{B;\rm\Psi
}\left( \rm\Psi \left( x\right) -\rm\Psi \left( a\right) \right) ^{\delta -1} \\
&=&\frac{\Gamma \left( \delta \right) }{\Gamma \left( \delta -B\right) }%
\mathcal{D}_{a+}^{A;\rm\Psi }I_{a+}^{1;\rm\Psi }\left( \rm\Psi \left( x\right) -\rm\Psi
\left( a\right) \right) ^{B+\delta -1} \\
&=&\frac{\Gamma \left( \delta \right) \Gamma \left( \delta +B\right) }{%
\Gamma \left( \delta -B\right) \Gamma \left( \delta +B+1\right) }\mathcal{D}%
_{a+}^{A;\rm\Psi }\left( \rm\Psi \left( x\right) -\rm\Psi \left( a\right) \right)
^{B+\delta } \\
&=&\frac{\Gamma\left( \delta \right) \Gamma \left( \delta +B\right) }{%
\Gamma \left( \delta -B\right) \Gamma \left( B-A+\delta +1\right) }\left(
\rm\Psi \left( x\right) -\rm\Psi \left( a\right) \right) ^{A+B+\delta }.
\end{eqnarray*}

Substituting the values of $A$ and $B$, we conclude the proof.
\end{proof}

\begin{figure}[h!]
\caption{Graph of $\mathbf{I}^{\protect\mu,\protect\nu;\protect\rm\Psi%
}_{0+}f(x)$, for the kernel $\protect\rm\Psi(x)=x$ with $a=0$, $\protect\nu%
=0.5$ and $\protect\delta=1.5$.}
\label{fig:eryt1}\centering 
\includegraphics[width=13cm]{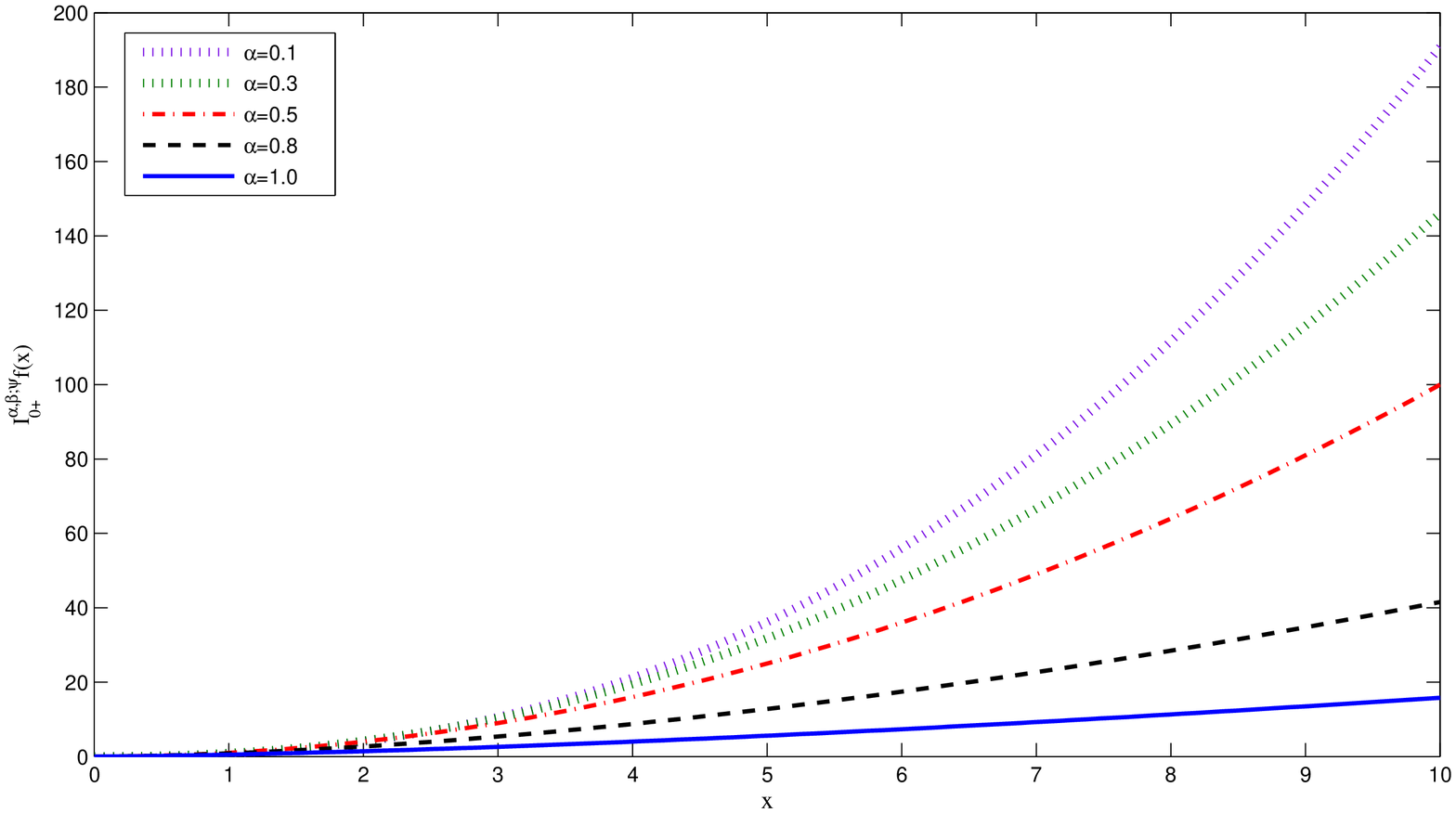}
\end{figure}
\newpage
\begin{figure}[h!]
\caption{Graph of $\mathbf{I}^{\protect\mu,\protect\nu;\protect\rm\Psi%
}_{0+}f(x)$, for the kernel $\protect\rm\Psi(x)=\sqrt[]{x+1}$ with $a=0$, $%
\protect\nu=0.5$ and $\protect\delta=1.5$.}
\label{fig:eryt1}\centering 
\includegraphics[width=13cm]{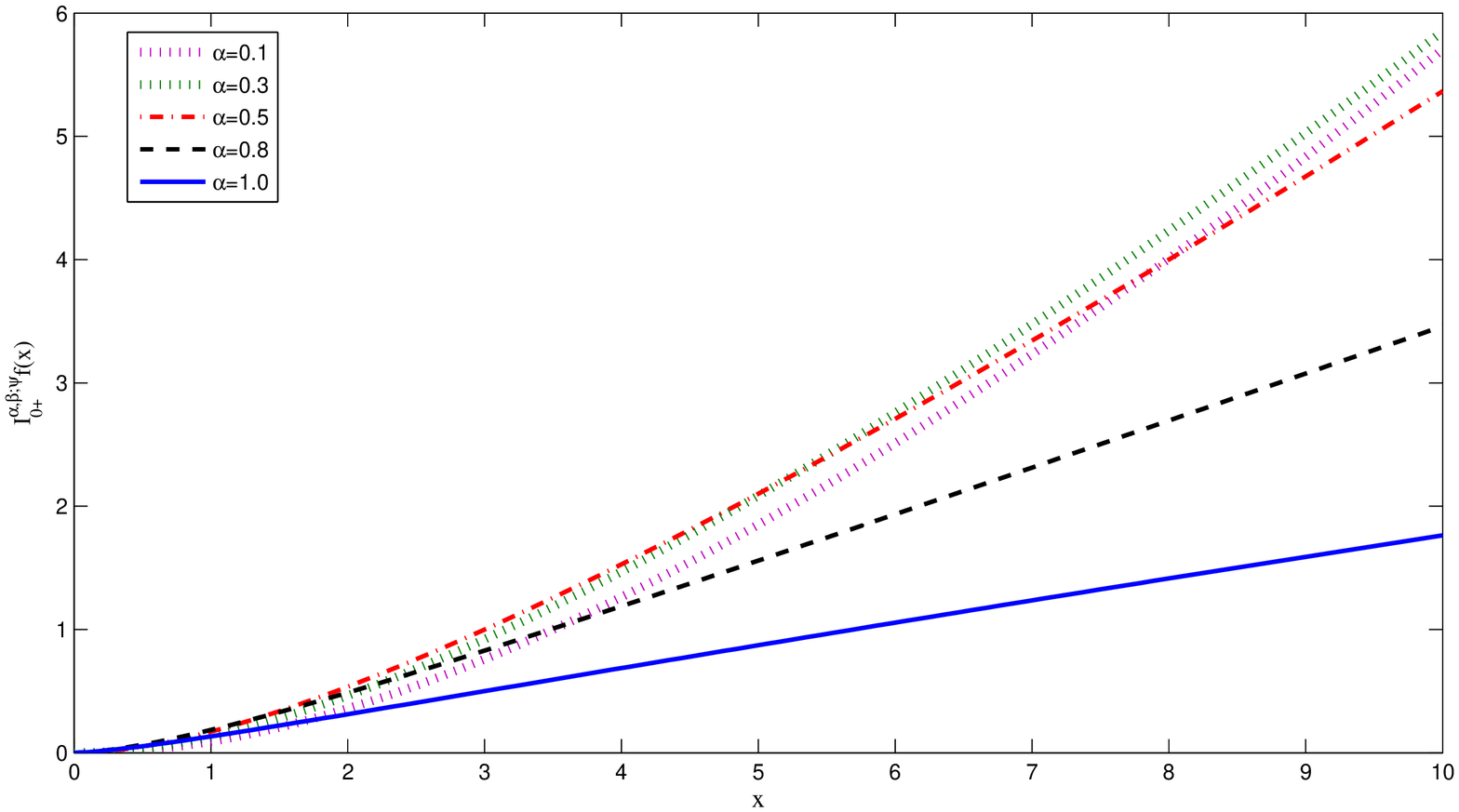}
\end{figure}

\begin{figure}[h!]
\caption{Graph of $\mathbf{I}^{\protect\mu,\protect\nu;\protect\rm\Psi}_{0+}f(x)$, for the kernel $\protect\rm\Psi(x)=\ln x$ with $a=0$, $\protect%
\nu=0.5$ and $\protect\delta=1.5$.}
\label{fig:eryt1}\centering 
\includegraphics[width=13cm]{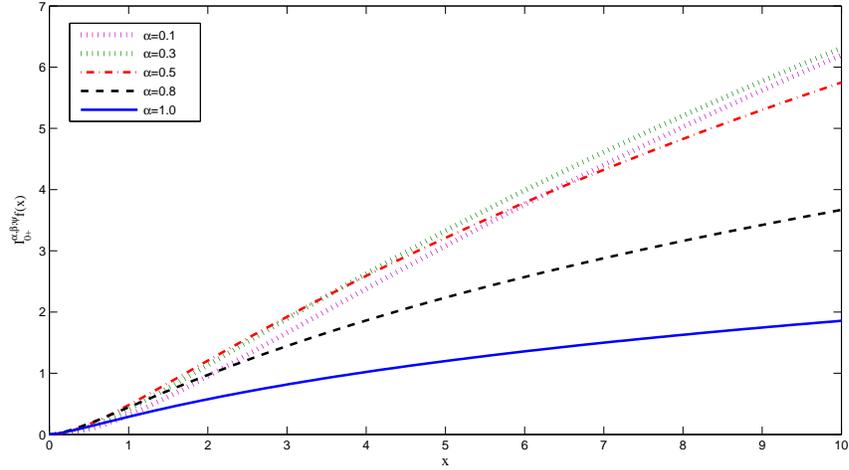}
\end{figure}
\newpage
\textbf{The graphs Fig.1, Fig.2 and Fig.3 are particular cases of Lemma \ref{lema4}. Note that the behavior of the graph Fig.1 is indeed interesting, because as the parameter $\mu$ grows, that is, $\mu = 0.1,0.3,0.5,0.8,1.0 $, the curves perform a behavior contrary. Note that for $\mu = 1.0$, the curve is closer to the $x$ axis than for $\mu = 0.1 $. On the other hand, for Fig. 2 and Fig. 3, their behavior no longer respects a pattern of growth or decay, as seen in the graphs. Such behavior of curves may become important when working with fractional integral equations, in which the calculation of such a function is necessary \cite{ML6}.}

\begin{lemma}\label{lema5} \textbf{Let $\delta\in\mathbb{R}$ and $\left( \rm\Psi \left( x\right) -\rm\Psi \left( a\right) \right) ^{\delta -1}$, where $\delta>0$. Then, for $0<\mu<1$ and $0<\nu<1$, we have}
\begin{equation}
^{H}\mathscr{D}_{a+}^{\mu ,\nu ;\rm\Psi }\left( \rm\Psi \left( x\right) -\rm\Psi \left(
a\right) \right) ^{\delta -1}=\frac{\Gamma \left( \delta \right) }{\Gamma
\left( \delta -\mu \right) }\left( \rm\Psi \left( x\right) -\rm\Psi \left(
a\right) \right) ^{\delta -\mu -1}
\end{equation}
\end{lemma}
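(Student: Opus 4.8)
The plan is to evaluate the $\rm\Psi-$H derivative directly from its definition in Eq.(\ref{HIL}), reducing the whole computation to two applications of the power rule for the $\rm\Psi-$RL integral recorded in Lemma \ref{lema2}, with a single elementary differentiation in between. To keep the bookkeeping light I would reuse the abbreviations from the proof of Lemma \ref{lema4}, namely $A=(1-\nu)(1-\mu)$ and $B=\nu(1-\mu)$, so that $A+B=1-\mu$ and $\xi=\mu+B$, and write
\begin{equation*}
{}^{H}\mathscr{D}_{a+}^{\mu ,\nu ;\rm\Psi }\left( \rm\Psi(x)-\rm\Psi(a)\right)^{\delta-1}=\mathcal{I}_{a+}^{B;\rm\Psi}\left(\frac{1}{\rm\Psi^{\prime}(x)}\frac{d}{dx}\right)\mathcal{I}_{a+}^{A;\rm\Psi}\left( \rm\Psi(x)-\rm\Psi(a)\right)^{\delta-1}.
\end{equation*}

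Working from the inside out, I would first apply Lemma \ref{lema2} to the innermost integral, which sends $\left(\rm\Psi(x)-\rm\Psi(a)\right)^{\delta-1}$ to $\frac{\Gamma(\delta)}{\Gamma(\delta+A)}\left(\rm\Psi(x)-\rm\Psi(a)\right)^{\delta+A-1}$. The operator $\frac{1}{\rm\Psi^{\prime}}\frac{d}{dx}$ then acts by the chain rule: it lowers the exponent by one and produces a factor $(\delta+A-1)$, which I would absorb using $\Gamma(\delta+A)=(\delta+A-1)\Gamma(\delta+A-1)$, leaving $\frac{\Gamma(\delta)}{\Gamma(\delta+A-1)}\left(\rm\Psi(x)-\rm\Psi(a)\right)^{\delta+A-2}$. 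A second application of Lemma \ref{lema2}, now with parameter $B$, evaluates the outer integral, and the intermediate factor $\Gamma(\delta+A-1)$ cancels telescopically, yielding $\frac{\Gamma(\delta)}{\Gamma(\delta+A+B-1)}\left(\rm\Psi(x)-\rm\Psi(a)\right)^{\delta+A+B-2}$.

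It then remains only to substitute $A+B=1-\mu$, whereupon the argument $\delta+A+B-1$ of the Gamma function collapses to $\delta-\mu$ and the exponent $\delta+A+B-2$ collapses to $\delta-\mu-1$, giving precisely $\frac{\Gamma(\delta)}{\Gamma(\delta-\mu)}\left(\rm\Psi(x)-\rm\Psi(a)\right)^{\delta-\mu-1}$, as claimed. I expect no genuine obstacle here, since every step is forced by the power rule of Lemma \ref{lema2} together with the chain rule; the only thing to watch is keeping the two applications of Lemma \ref{lema2} (with the distinct parameters $A$ and $B$) straight, exactly as was done for the integral version in Lemma \ref{lema4}. In fact the assertion is the special case $0<\nu<1$ of Lemma \ref{lema3}, so one could alternatively dispense with the computation entirely and simply invoke that lemma.
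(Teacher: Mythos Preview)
Your argument is correct: the three-step evaluation (inner $\mathcal{I}_{a+}^{A;\rm\Psi}$ via Lemma~\ref{lema2}, the chain-rule differentiation, outer $\mathcal{I}_{a+}^{B;\rm\Psi}$ via Lemma~\ref{lema2}) telescopes exactly as you describe, and the identification with the $0<\nu<1$ case of Lemma~\ref{lema3} is spot on. The paper itself gives no argument at all for this lemma---its proof consists solely of the citation ``See \cite{JEM1}''---so your explicit computation actually supplies more than the paper does; had you wished to match the paper literally, your closing remark (that this is already contained in Lemma~\ref{lema3}) would alone suffice.
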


\begin{proof} \textbf{See} {\rm \cite{JEM1}}.
\end{proof}

The study of differential equations is of paramount importance in mathematics. The next example will be important for the function that we will perform in the next section.

\begin{lemma}\label{lema6} \textbf{Given $\lambda>0$, $0<\mu<1$ and $0\leq \nu \leq 1$. Let $f\left( x\right) =\mathbf{E}_{\mu }\left[\lambda \left( \rm\Psi \left( x\right) -\rm\Psi \left( a\right) \right) ^{\mu }\right] $, where $\mathbf{E}_{\mu}(\cdot)$ is a one parameter ${\rm M-L}$ function. Then}
\begin{equation*}
^{H}\mathscr{D}_{a+}^{\mu ,\nu ;\rm\Psi }f\left( x\right) =\lambda f\left( x\right) .
\end{equation*}
\end{lemma}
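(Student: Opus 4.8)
The plan is to expand $f$ in its defining Mittag-Leffler series and differentiate term by term. Writing out Eq.(\ref{A1}) with argument $z=\lambda\left(\rm\Psi(x)-\rm\Psi(a)\right)^{\mu}$ gives
\begin{equation*}
f(x)=\mathbf{E}_{\mu}\left[\lambda\left(\rm\Psi(x)-\rm\Psi(a)\right)^{\mu}\right]=\sum_{k=0}^{\infty}\frac{\lambda^{k}}{\Gamma(\mu k+1)}\left(\rm\Psi(x)-\rm\Psi(a)\right)^{\mu k},
\end{equation*}
which is a series of functions of the form $\left(\rm\Psi(x)-\rm\Psi(a)\right)^{\delta-1}$ with $\delta=\mu k+1$. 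Each such power is exactly the object handled by Lemma \ref{lema5} (equivalently Lemma \ref{lema3}), and since $\mu>0$ we have $\delta=\mu k+1>0$ for every $k\geq 0$, so that lemma applies legitimately to every term.

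First I would justify passing $^{H}\mathscr{D}_{a+}^{\mu,\nu;\rm\Psi}$ inside the sum. The partial sums form a uniformly convergent sequence of continuous functions on $\Delta$ (the Mittag-Leffler series converges locally uniformly), so the interchange follows from the linearity of the operator together with the continuity/uniform-convergence reasoning already used in Theorem \ref{teo310}. Granting this, I would apply Lemma \ref{lema5} to each monomial with $\delta=\mu k+1$, obtaining
\begin{equation*}
{}^{H}\mathscr{D}_{a+}^{\mu,\nu;\rm\Psi}\left(\rm\Psi(x)-\rm\Psi(a)\right)^{\mu k}=\frac{\Gamma(\mu k+1)}{\Gamma(\mu k+1-\mu)}\left(\rm\Psi(x)-\rm\Psi(a)\right)^{\mu(k-1)}.
\end{equation*}
Substituting this back and cancelling $\Gamma(\mu k+1)$ against the series coefficient collapses the derivative to $\sum_{k}\lambda^{k}\left(\rm\Psi(x)-\rm\Psi(a)\right)^{\mu(k-1)}/\Gamma(\mu(k-1)+1)$; the index shift $k\mapsto k+1$ then reindexes this into $\lambda\sum_{j\geq 0}\lambda^{j}\left(\rm\Psi(x)-\rm\Psi(a)\right)^{\mu j}/\Gamma(\mu j+1)=\lambda f(x)$, which is the claimed identity.

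The main obstacle is the $k=0$ term. Under the shift it is the contribution that must drop out for the reindexing to produce precisely $\lambda f(x)$, yet as a constant leading term its formal image under Lemma \ref{lema5} is the nonzero remainder $\left(\rm\Psi(x)-\rm\Psi(a)\right)^{-\mu}/\Gamma(1-\mu)$. I would therefore isolate this term and argue that the $\rm\Psi$-Hilfer derivative annihilates the constant, so that no spurious $\left(\rm\Psi(x)-\rm\Psi(a)\right)^{-\mu}$ survives on the right-hand side; this is exactly the point where the type parameter $\nu$ enters, and it is consistent with the eigenvalue relation degenerating to the exponential identity $\tfrac{d}{dx}e^{\lambda x}=\lambda e^{\lambda x}$ in the limiting case. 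Verifying this vanishing cleanly, rather than simply applying the power-rule formula blindly to the $k=0$ term, is the delicate step on which the whole statement turns.
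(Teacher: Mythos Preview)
The paper does not supply its own argument here; it simply cites an external reference. Your termwise strategy via the Mittag-Leffler series together with Lemma~\ref{lema3} (restated as Lemma~\ref{lema5}) is the natural route and is presumably what that reference does.

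There is, however, a genuine gap at exactly the point you flagged. You propose to dispose of the $k=0$ term by arguing that ${}^{H}\mathscr{D}_{a+}^{\mu,\nu;\rm\Psi}$ annihilates constants, saying this is ``where the type parameter $\nu$ enters.'' But Lemma~\ref{lema3} with $\delta=1$ already gives
\[
{}^{H}\mathscr{D}_{a+}^{\mu,\nu;\rm\Psi}(1)=\frac{(\rm\Psi(x)-\rm\Psi(a))^{-\mu}}{\Gamma(1-\mu)},
\]
which is \emph{independent of $\nu$} and nonzero for $0<\mu<1$. A direct computation from Eq.~(\ref{HIL}) confirms this for every $\nu\in[0,1)$: $\mathcal{I}_{a+}^{(1-\nu)(1-\mu);\rm\Psi}(1)$ is a positive power of $(\rm\Psi(x)-\rm\Psi(a))$, and after applying $\frac{1}{\rm\Psi'(x)}\frac{d}{dx}$ and then $\mathcal{I}_{a+}^{\nu(1-\mu);\rm\Psi}$ one recovers precisely the expression above. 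Only in the limiting Caputo case $\nu=1$ does the inner integral reduce to the identity, so that the ordinary derivative kills the constant. Hence your proposed resolution is valid solely when $\nu=1$; for $\nu<1$ the spurious term survives and the termwise calculation actually delivers
\[
{}^{H}\mathscr{D}_{a+}^{\mu,\nu;\rm\Psi}f(x)=\lambda f(x)+\frac{(\rm\Psi(x)-\rm\Psi(a))^{-\mu}}{\Gamma(1-\mu)}.
\]
So either a genuinely different treatment of the $k=0$ contribution is required, or the eigenfunction identity really holds only in the Caputo case $\nu=1$ and the stated range $0\le\nu\le 1$ must be read with that restriction. Your proof sketch does not close as written, and invoking Theorem~\ref{teo310} (which concerns the integral operator, not the derivative) does not help with this issue.
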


\begin{proof} \textbf{See} {\rm \cite{JEM1}}.
\end{proof}

\begin{lemma} \textbf{Given $\lambda >0,$ $0<\mu <1$ and $0\leq \nu \leq 1$. Let $f\left( x\right) ={\mathbf{E}_{\mu }}\left[ \left( \rm\Psi \left( x\right) -\rm\Psi \left( a\right) \right) ^{\mu }\right] $, where $\mathbf{E}_{\mu }\left( \cdot \right) $ is a one parameter ${\rm M-L}$ function. Then} 
\begin{equation}
\mathbf{I}_{a+}^{\mu ,\nu ;\rm\Psi }f\left( x\right) =\mathbf{E}_{\mu }\left[ \left( \rm\Psi \left( x\right) -\rm\Psi
\left( a\right) \right) ^{\mu }\right] -1.
\end{equation}
\end{lemma}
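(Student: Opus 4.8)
The plan is to expand the one-parameter Mittag-Leffler function in its defining power series and then pass the $\rm\Psi-$fractional integral through the series term by term. Writing
\[
f(x)=\mathbf{E}_{\mu}\!\left[\left(\rm\Psi(x)-\rm\Psi(a)\right)^{\mu}\right]=\sum_{k=0}^{\infty}\frac{\left(\rm\Psi(x)-\rm\Psi(a)\right)^{\mu k}}{\Gamma(\mu k+1)},
\]
I would first observe that this series converges uniformly on the compact interval $\Delta$, so its partial sums form a uniformly convergent sequence of continuous functions on $\Delta$. This is precisely the hypothesis of Theorem \ref{teo310}, which licenses the interchange of the operator with the limit of partial sums and hence with the infinite sum, giving
\[
\mathbf{I}_{a+}^{\mu,\nu;\rm\Psi}f(x)=\sum_{k=0}^{\infty}\frac{1}{\Gamma(\mu k+1)}\,\mathbf{I}_{a+}^{\mu,\nu;\rm\Psi}\left(\rm\Psi(x)-\rm\Psi(a)\right)^{\mu k}.
\]

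The second step is to evaluate the operator on each monomial. Each summand has the form $\left(\rm\Psi(x)-\rm\Psi(a)\right)^{\delta-1}$ with $\delta=\mu k+1>0$, so Lemma \ref{lema4} applies directly. The feature I would extract is that, on such power functions, $\mathbf{I}_{a+}^{\mu,\nu;\rm\Psi}$ behaves exactly as an integration of order $\mu$: the three fractional indices $\nu(1-\mu)$, $1$ and $(1-\nu)(1-\mu)$ telescope (their net contribution to the exponent is $+\mu$), the dependence on $\nu$ cancels, and only the Gamma quotient $\Gamma(\delta)/\Gamma(\delta+\mu)$ survives, reproducing the action of $\mathcal{I}_{a+}^{\mu;\rm\Psi}$. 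With $\delta=\mu k+1$ this reads
\[
\mathbf{I}_{a+}^{\mu,\nu;\rm\Psi}\left(\rm\Psi(x)-\rm\Psi(a)\right)^{\mu k}=\frac{\Gamma(\mu k+1)}{\Gamma(\mu(k+1)+1)}\left(\rm\Psi(x)-\rm\Psi(a)\right)^{\mu(k+1)}.
\]

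In the third step I would reassemble the series: the factor $\Gamma(\mu k+1)$ cancels against the expansion coefficient, leaving
\[
\mathbf{I}_{a+}^{\mu,\nu;\rm\Psi}f(x)=\sum_{k=0}^{\infty}\frac{\left(\rm\Psi(x)-\rm\Psi(a)\right)^{\mu(k+1)}}{\Gamma(\mu(k+1)+1)}=\sum_{j=1}^{\infty}\frac{\left(\rm\Psi(x)-\rm\Psi(a)\right)^{\mu j}}{\Gamma(\mu j+1)},
\]
after the reindexing $j=k+1$. The right-hand side is the full Mittag-Leffler series with its $j=0$ term (which equals $1$) deleted, so it equals $\mathbf{E}_{\mu}[(\rm\Psi(x)-\rm\Psi(a))^{\mu}]-1$, as claimed. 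The step I expect to be most delicate is the index bookkeeping of the second paragraph: one must confirm that the three half-integer-order indices collapse so that the operator acts genuinely as an order-$\mu$ integration, raising every power by one level. It is exactly this upward shift that discards the constant term and produces the subtracted $1$, in complete analogy with the elementary identity $\mathcal{I}_{a+}^{1;\rm\Psi}$ applied to the exponential, where the value of $f$ at the base point $a$ (here $\mathbf{E}_{\mu}(0)=1$) is the quantity that gets subtracted. A secondary technical point is to make the passage of the operator through the infinite sum rigorous, which is supplied by Theorem \ref{teo310} once uniform convergence of the Mittag-Leffler series on $\Delta$ is recorded.
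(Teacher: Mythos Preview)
Your proposal is correct and follows essentially the same route as the paper: both expand $\mathbf{E}_{\mu}$ as its defining power series, act termwise on each monomial $(\Psi(x)-\Psi(a))^{\mu k}$, and recognise the index-shifted series as $\mathbf{E}_{\mu}[(\Psi(x)-\Psi(a))^{\mu}]-1$. The only cosmetic difference is that the paper applies the three constituent operators $\mathcal{D}_{a+}^{\nu(1-\mu);\Psi}$, $I_{a+}^{1;\Psi}$, $\mathcal{D}_{a+}^{(1-\nu)(1-\mu);\Psi}$ to the series one at a time instead of packaging them into a single power rule as you do.
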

 
\begin{proof}\textbf{First, note that, by Lemma \ref{lema3}, we have}
\begin{eqnarray}\label{jp1}
\mathcal{D}_{a+}^{\nu \left( 1-\mu \right) ;\rm\Psi }f\left( x\right) 
&=&\mathcal{D}_{a+}^{\nu \left( 1-\mu \right) ;\rm\Psi }\left[ \mathbf{E}_{\mu }\left(
\rm\Psi \left( x\right) -\rm\Psi \left( a\right) \right) ^{\mu }\right]   \notag
\\
&=&\overset{\infty }{\underset{k=0}{\sum }}\frac{1}{\Gamma \left( \mu
k+1\right) }\mathcal{D}_{a+}^{\nu \left( 1-\mu \right) ;\rm\Psi }\left( \rm\Psi \left(
x\right) -\rm\Psi \left( a\right) \right) ^{\mu k} \\
&=&\overset{\infty }{\underset{k=1}{\sum }}\frac{1}{\Gamma \left( \mu
k+1-\nu \left( 1-\mu \right) \right) }\left( \rm\Psi \left( x\right) -\rm\Psi
\left( a\right) \right) ^{\mu k-\nu \left( 1-\mu \right) } \notag.
\end{eqnarray}

\textbf{applying the fractional integral on both sides of the Eq.(\ref{jp1}), we get }
\begin{eqnarray}\label{jp2}
I_{a+}^{1;\rm\Psi }\mathcal{D}_{a+}^{\nu \left( 1-\mu \right) ;\rm\Psi }f\left(
x\right)  &=&\overset{\infty }{\underset{k=1}{\sum }}\frac{1}{\Gamma \left(
\mu k+1-\nu \left( 1-\mu \right) \right) }I_{a+}^{1;\rm\Psi }\left(
\rm\Psi \left( x\right) -\rm\Psi \left( a\right) \right) ^{\mu k-\nu \left(
1-\mu \right) }  \notag \\
&=&\overset{\infty }{\underset{k=1}{\sum }}\frac{1}{\Gamma \left( \mu
k-\nu \left( 1-\mu \right) +2\right) }\left( \rm\Psi \left( x\right) -\rm\Psi
\left( a\right) \right) ^{\mu k-\nu \left( 1-\mu \right) +1}.\notag \\
\end{eqnarray}

\textbf{Finally, applying the fractional derivative $\mathcal{D}_{a+}^{\left( 1-\mu \right) \left( 1-\nu \right) ;\rm\Psi }\left( \cdot \right) $ on both sides of Eq.(\ref{jp2}), we have}
\begin{eqnarray}
\mathbf{I}_{a+}^{\mu ,\nu ;\rm\Psi }f\left( x\right)  &=&\overset{\infty }{\underset
{k=1}{\sum }}\frac{1}{\Gamma \left( \mu k-\nu \left( 1-\mu \right)
+2\right) }\mathcal{D}_{a+}^{\left( 1-\mu \right) \left( 1-\nu \right) ;\rm\Psi
}\left( \rm\Psi \left( x\right) -\rm\Psi \left( a\right) \right) ^{\mu k-\nu
\left( 1-\mu \right) +1}  \notag \\
&=&\overset{\infty }{\underset{k=1}{\sum }}\frac{\left( \rm\Psi \left( x\right)
-\rm\Psi \left( a\right) \right) ^{\mu k-\mu }}{\Gamma \left( \mu
k-\mu +1\right) }-1\notag \\ &=&\mathbf{E}_{\mu }\left[ \left( \rm\Psi \left( x\right) -\rm\Psi
\left( a\right) \right) ^{\mu }\right] -1.
\end{eqnarray}

\end{proof}


\section{Applications}\label{sec5}

In this section we will discuss the uniqueness of the nonlinear $\rm\Psi-$fractional ${\rm VIE}$, by means of $\beta-$distance and $\beta-$generalized $\mathbf{SS}$-contraction function. In addition, we analyze a model described by a differential equation through the ${\rm \Psi-H}$ fractional derivative, and show how the fractional derivatives with respect to another function can be useful in the study of differential equations. The function, is associated with the so-called Malthusian model \cite{porco}, specifically population growth.

First, we present a brief study of the uniqueness of the nonlinear $\rm\Psi-$fractional ${\rm VIE}$. The following results are very important in the studies of $\rm\Psi-$fractional nonlinear integral equations, specially ${\rm VIE}$.

\begin{theorem} Consider the nonlinear $\rm\Psi-$fractional ${\rm VIE}$
\begin{equation}\label{cont51}
x\left( t\right) =\varphi \left( t\right) +\mathbf{I}_{a+}^{\mu ,\nu ;\rm\Psi}\left[ \mathfrak{W}\left( t,s,x\left(s\right) \right) \right] 
\end{equation}
where $0<\mu <1,$ $0\leq \nu \leq 1$, $a,b\in \mathbb{R}$ with $a<b,$ $\varphi :\Delta\rightarrow \mathbb{R}$ and $\mathfrak{W}:\Delta ^{2}\times \mathbb{R}\rightarrow \mathbb{R}$ are two given function. Suppose that $\varphi $, $\mathfrak{W}$ are continuous functions and the function $f:C\left(\Delta,\mathbb{R}\right)\rightarrow C\left(\Delta,\mathbb{R}\right)$ defined by
\begin{equation*}
\left( fx\right) \left( t\right) =\varphi \left( t\right) +\mathbf{I}_{a+}^{\mu ,\nu ;\rm\Psi }\left[ \mathfrak{W}\left( t,s,x\left( s\right) \right) \right] 
\end{equation*}
$\forall$ $x\in C\left(\Delta,\mathbb{R}\right) $ and $t\in \Delta $ is a continuous function. Besides that, let two functions $\mathbf{h},\phi :\Delta_{1} \rightarrow \Delta_{1} $ with $\mathbf{h}$ is an altering distance function and $\phi $ \ is a continuous function such that $\mathbf{h}\left( t\right) \leq t$, $\forall$ $t\geq 0$, $\phi \left( t\right) =0$ if and only if $t=0$, and for each $x,y\in C\left(\Delta,\mathbb{R}\right)$, we have
\begin{eqnarray*}
\left\vert \mathfrak{W}\left( t,s,x\left( s\right) \right) \right\vert +\left\vert
\mathfrak{W}\left( t,s,y\left( s\right) \right) \right\vert \leq \mathbf{h}\left( \left\vert x\left( s\right) \right\vert +\left\vert y\left( s\right) \right\vert \right)  &&-\frac{\phi \left( \underset{t\in \Delta }{\sup }\left\vert x\left( t\right) \right\vert +\underset{t\in \Delta }{\sup }\left\vert y\left(
t\right) \right\vert \right) -2\left\vert \varphi \left( t\right)\right\vert }{A}
\end{eqnarray*}
$\forall$ $t,s\in \Delta $ and $A=\dfrac{\Gamma \left( 1-\nu \left( 1-\mu \right) \right) \Gamma \left( 1+2\nu \left( 1-\mu \right) +\mu \right) }{\Gamma \left( 1+\nu \left( 1-\mu \right) \right) \left( \rm\Psi \left( b\right) -\rm\Psi \left( a\right) \right) ^{2-\mu }}$.

Then the nonlinear $\rm\Psi-$fractional ${\rm VIE}$ {\rm Eq.(\ref{cont51})}, has a unique solution. Moreover, for each $x_{0}\in C\left(\Delta,\mathbb{R}\right) $ the Picard iteration $\left\{ x_{n}\right\} $, which is defined by
\begin{equation*}
x_{n}\left( t\right) =\varphi \left( t\right) +\mathbf{I}_{a+}^{\mu ,\nu ;\rm\Psi }\left[ \mathfrak{W}\left(t,s,x_{n-1}\left(s\right) \right) \right] 
\end{equation*}
$\forall$ $n\in \mathbb{N}$, converges to a unique solution of the nonlinear $\rm\Psi-$fractional ${\rm VIE}$ {\rm Eq.(\ref{cont51})}.
\end{theorem}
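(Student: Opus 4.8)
The plan is to recast Eq.(\ref{cont51}) as a fixed-point problem: a function $x\in C\left(\Delta,\mathbb{R}\right)$ solves the nonlinear $\rm\Psi$-fractional VIE precisely when it is a fixed point of the operator $f$ displayed in the statement. Since $\left(C\left(\Delta,\mathbb{R}\right),\mathbf{d}_{0}\right)$ with $\mathbf{d}_{0}(x,y)=\sup_{t\in\Delta}|x(t)-y(t)|$ is a complete metric space and $f$ is assumed continuous, the entire argument reduces to exhibiting a suitable $\beta$-distance, verifying the contraction inequality Eq.(\ref{eq24}), and then invoking Theorem \ref{teo22} verbatim. The hypotheses on $\mathbf{h}$ (altering distance function, $\mathbf{h}(t)\le t$) and on $\phi$ (continuous, $\phi(t)=0\iff t=0$) match exactly those required by that theorem, so the whole proof is structured toward producing Eq.(\ref{eq24}).

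First I would introduce the candidate $\beta$-distance
\[
\mathbf{d}(x,y):=\sup_{t\in\Delta}|x(t)|+\sup_{t\in\Delta}|y(t)|,
\]
and check the three axioms of Definition \ref{def21}: the triangle inequality is immediate because each supremum is nonnegative, so $\mathbf{d}(x,y)\le \mathbf{d}(x,z)+\mathbf{d}(z,y)$; lower semicontinuity of $\mathbf{d}(x,\cdot)$ follows from continuity of the sup-norm under $\mathbf{d}_{0}$; and the $\varepsilon$--$\delta$ condition is routine. This is the structural object on which Theorem \ref{teo22} operates.

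The core of the proof is the contraction estimate. I would start from the linearity and positivity of $\mathbf{I}_{a+}^{\mu ,\nu ;\rm\Psi}$ established in the earlier properties theorem, which give
\[
|(fx)(t)|+|(fy)(t)|\le 2|\varphi(t)|+\mathbf{I}_{a+}^{\mu ,\nu ;\rm\Psi}\bigl[\,|\mathfrak{W}(t,s,x(s))|+|\mathfrak{W}(t,s,y(s))|\,\bigr].
\]
Into the integrand I insert the hypothesis bounding $|\mathfrak{W}(t,s,x(s))|+|\mathfrak{W}(t,s,y(s))|$, then use monotonicity of $\mathbf{h}$ together with $|x(s)|+|y(s)|\le \mathbf{d}(x,y)$ and the property $\mathbf{h}(r)\le r$. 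Evaluating $\mathbf{I}_{a+}^{\mu ,\nu ;\rm\Psi}[1]$ by Lemma \ref{lema4} with $\delta=1$, and noting that since $\rm\Psi$ is increasing this quantity attains its maximum $1/A$ at $t=b$ (exactly the reciprocal of the constant $A$ in the statement), the terms carrying $\varphi$ are designed to cancel, leaving an estimate of the form
\[
\mathbf{d}(fx,fy)\le \mathbf{h}\bigl(\mathbf{d}(x,y)\bigr)-\phi\bigl(\mathbf{d}(x,y)\bigr).
\]
Applying the monotone $\mathbf{h}$ and using $\mathbf{h}(r)\le r$ once more then yields
\[
\mathbf{h}\bigl(\mathbf{d}(fx,fy)\bigr)\le \mathbf{h}\bigl(\mathbf{d}(x,y)\bigr)-\phi\bigl(\mathbf{d}(x,y)\bigr),
\]
which is precisely Eq.(\ref{eq24}).

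With this inequality in hand, completeness of $\left(C\left(\Delta,\mathbb{R}\right),\mathbf{d}_{0}\right)$, continuity of $f$, $\mathbf{d}$ a $\beta$-distance, $\mathbf{h}$ an altering distance function and $\phi$ continuous with $\phi(t)=0\iff t=0$ are all the hypotheses of Theorem \ref{teo22}; it then furnishes a unique fixed point of $f$, hence a unique solution of Eq.(\ref{cont51}), together with convergence of the Picard iterates $x_{n}=f^{n}x_{0}$. I expect the main obstacle to be the contraction estimate itself: tracking the constant $A$ through the evaluation of $\mathbf{I}_{a+}^{\mu ,\nu ;\rm\Psi}[1]$ and confirming that the $\varphi$-dependent terms cancel so that the bound collapses exactly to the form Eq.(\ref{eq24}). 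By comparison, verifying that $\mathbf{d}$ is a genuine $\beta$-distance and the final invocation of Theorem \ref{teo22} are routine.
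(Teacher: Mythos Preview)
Your approach is essentially identical to the paper's: the same complete metric space $C(\Delta,\mathbb{R})$ with the sup metric, the same $\beta$-distance $\mathbf{d}(x,y)=\sup|x|+\sup|y|$, the same estimate of $|(fx)(t)|+|(fy)(t)|$ via the hypothesis on $\mathfrak{W}$ and Lemma~\ref{lema4} with $\delta=1$ to evaluate $\mathbf{I}_{a+}^{\mu,\nu;\rm\Psi}(1)$, and the same appeal to Theorem~\ref{teo22}. If anything you are slightly more explicit than the paper in verifying the $\beta$-distance axioms and in passing from the pointwise bound to the inequality $\mathbf{h}(\mathbf{d}(fx,fy))\le\mathbf{h}(\mathbf{d}(x,y))-\phi(\mathbf{d}(x,y))$ via monotonicity and $\mathbf{h}(r)\le r$.
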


\begin{proof} Let $\Lambda=C(\Delta,\mathbb{R}) $. First, note that, $\Lambda$ with the metric $d:\Lambda\times \Lambda\rightarrow \Delta_{1}$ given by
\begin{equation*}
d\left( x,y\right) =\underset{t\in \Delta }{\sup }\left\vert x\left( t\right) -y\left( t\right) \right\vert 
\end{equation*}
$\forall$ $x,y\in \Lambda$, is a complete metric space. Now, we define the function $\mathbf{d}:\Lambda\times \Lambda\rightarrow \Delta_{1} $ by
\begin{equation*}
\mathbf{d}\left( x,y\right) =\underset{t\in \Delta }{\sup }\left\vert x\left( t\right) \right\vert +\underset{t\in \Delta }{\sup } \left\vert y\left( t\right) \right\vert 
\end{equation*}
$\forall$ $x,y\in \Lambda.$ Clearly, $\mathbf{q}$ is a $\beta -$distance on $\Lambda$ and a distance of $d$. We show that $f$ is contractive condition Eq.(\ref{eq24}). Assume that $x,y\in \Lambda$, $t\in \Delta $ and applying $\delta =1$ in the Lemma \ref{lema4}, then we have
\begin{eqnarray*}
\left\vert \left( fx\right) \left( t\right) \right\vert +\left\vert \left(
fy\right) \left( t\right) \right\vert &\leq &\left\vert \varphi \left(t\right) \right\vert +\left\vert \mathbf{I}_{a+}^{\mu ,\nu ;\rm\Psi }\mathfrak{W}\left( t,s,x\left( s\right) \right) \right\vert +\left\vert \varphi \left( t\right) \right\vert +\left\vert \mathbf{I}%
_{a+}^{\mu ,\nu ;\rm\Psi }\mathfrak{W}\left( t,s,y\left( s\right) \right) \right\vert 
\notag \\
&\leq &2\left\vert \varphi \left( t\right) \right\vert +\mathbf{I}%
_{a+}^{\mu ,\nu ;\rm\Psi }\left[ \frac{\mathbf{h}\left( \left\vert
x\left( t\right) \right\vert +\left\vert y\left( t\right) \right\vert
\right) -\phi \left( \mathbf{d}\left( x,y\right) \right) -2\left\vert \varphi \left(
t\right) \right\vert }{A}\right]   \notag \\
&\leq &2\left\vert \varphi \left( t\right) \right\vert +\left[ \frac{%
\mathbf{h}\left( \mathbf{d}\left( x,y\right) \right) -\phi \left( \mathbf{d}\left(
x,y\right) \right) -2\left\vert \varphi \left( t\right) \right\vert }{A}%
\right] \mathbf{I}_{a+}^{\mu ,\nu ;\rm\Psi }(1)  \notag \\
&\leq &\mathbf{h}\left( \mathbf{d}\left( x,y\right) \right) -\phi \left(
\mathbf{d}\left( x,y\right) \right) 
\end{eqnarray*}
$\forall$ $x,y\in \Lambda$. It follows that $f$ satisfies the condition Eq.(\ref{eq24}). Therefore, all conditions of Theorem \ref{teo22} are satisfied and thus $f$ has a unique fixed point. Thus, we conclude the proof.
\end{proof}

\begin{theorem} Consider the nonlinear $\rm\Psi-$fractional ${\rm VIE}$
\begin{equation} \label{cont52}
x\left( t\right) =\phi \left( t\right) +\mathbf{I}_{a+}^{\mu ,\nu ;\rm\Psi }\left[ \mathfrak{W}\left( t,s,x\left( s\right) \right) \right] 
\end{equation}
where $0<\mu <1,$ $0\leq \nu \leq 1$, $a,b\in \mathbb{R} $ with $a<b,$ $\phi :\Delta \rightarrow \mathbb{R}$ and $\mathfrak{W}:\Delta ^{2}\times \mathbb{R}\rightarrow \mathbb{R}$ are two given function. Suppose that $\phi $, $\mathfrak{W}$ are continuous functions and the function $f:C\left(\Delta,\mathbb{R}\right)\rightarrow C\left(\Delta,\mathbb{R}\right)$ defined by
\begin{equation*}
\left( fx\right) \left( t\right) =\phi \left( t\right) +\mathbf{I}_{a+}^{\mu ,\nu ;\rm\Psi }\left[ \mathfrak{W}\left( t,s,x\left( s\right) \right) \right] 
\end{equation*}
$\forall$ $x\in C\left(\Delta,\mathbb{R}\right) $ and $t\in \Delta $ is a continuous function. Besides that, there is $\mathbf{h}\in \Omega $ such that $\mathbf{h}\left( t\right) \leq t$, $\forall$ $t\geq 1$ and for each $x,y\in C\left(\Delta,\mathbb{R}\right)$, we have
\begin{equation*}
\left\vert \mathfrak{W}\left( t,s,x\left( s\right) \right) \right\vert +\left\vert \mathfrak{W}\left( t,s,y\left( s\right) \right) \right\vert \leq \frac{\left[ \mathbf{h}\left( \left\vert x\left( s\right) \right\vert +\left\vert y\left(s\right) \right\vert \right) \right] ^{k_{1}}-2\left\vert \phi \left( t\right) \right\vert }{A}
\end{equation*}
$\forall$ $t,s\in \Delta $, $k_{1}\in \left[ 0,1\right) $ and $A=\dfrac{\Gamma \left( 1-\nu \left( 1-\mu \right) \right) \Gamma \left( 1+2\nu \left( 1-\mu \right) +\mu \right) }{\Gamma\left( 1+\nu \left( 1-\mu \right) \right) \left( \rm\Psi \left( b\right) -\rm\Psi \left( a\right) \right) ^{2-\mu }}$.

Then, the nonlinear $\rm\Psi-$fractional ${\rm VIE}$ {\rm Eq.(\ref{cont52})}, has a unique solution. Moreover, for each $x_{0}\in C(\Delta,\mathbb{R}) $ the Picard iteration $\left\{ x_{n}\right\}$, which is defined by
\begin{equation*}
x_{n}\left( t\right) =\phi \left( t\right) +\mathbf{I}_{a+}^{\mu ,\nu ;\rm\Psi } \left[ \mathfrak{W}\left(t,s,x_{n-1}\left( s\right) \right) \right] 
\end{equation*}
$\forall$ $n\in \mathbb{N}$, converges to a unique solution of the nonlinear $\rm\Psi-$fractional ${\rm VIE}$ {\rm Eq.(\ref{cont52})}.
\end{theorem}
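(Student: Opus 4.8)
The plan is to reproduce the fixed-point scheme of the preceding theorem, replacing Theorem \ref{teo22} by Theorem \ref{teo23}; accordingly, the contractive hypothesis to be checked is the power-type inequality Eq.(\ref{eq25}), $\mathbf{h}(\mathbf{d}(fx,fy))\leq[\mathbf{h}(\mathbf{d}(x,y))]^{k_{1}}$, rather than the subtractive one. First I would take $\Lambda=C(\Delta,\mathbb{R})$ with the uniform metric $d(x,y)=\sup_{t\in\Delta}|x(t)-y(t)|$, so that $(\Lambda,d)$ is complete, and define $\mathbf{d}(x,y)=\sup_{t\in\Delta}|x(t)|+\sup_{t\in\Delta}|y(t)|$. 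Exactly as in the previous proof one checks that $\mathbf{d}$ is a $\beta$-distance on $\Lambda$ in the sense of Definition \ref{def21}, a distance of $d$, and an altering distance on $d$, so that the hypotheses of Theorem \ref{teo23} concerning $\mathbf{d}$ and $\mathbf{h}\in\Omega$ are in place. Since the fixed points of $f$ coincide by construction with the solutions of the nonlinear $\Psi$-fractional {\rm VIE} Eq.(\ref{cont52}), it then suffices to produce a unique fixed point of $f$.

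The heart of the argument is the verification of Eq.(\ref{eq25}). Starting from the triangle inequality for $|\cdot|$ together with the linearity and positivity of $\mathbf{I}_{a+}^{\mu,\nu;\Psi}$ established above, I would write, for $x,y\in\Lambda$ and $t\in\Delta$,
\[
|(fx)(t)|+|(fy)(t)|\leq 2|\phi(t)|+\mathbf{I}_{a+}^{\mu,\nu;\Psi}\big[\,|\mathfrak{W}(t,s,x(s))|+|\mathfrak{W}(t,s,y(s))|\,\big].
\]
Monotonicity of $\mathbf{h}$ and the bound $|x(s)|+|y(s)|\leq\mathbf{d}(x,y)$ dominate the integrand by the constant $A^{-1}\big([\mathbf{h}(\mathbf{d}(x,y))]^{k_{1}}-2|\phi(t)|\big)$ in the variable $s$, and pulling it out by linearity gives
\[
|(fx)(t)|+|(fy)(t)|\leq 2|\phi(t)|+\frac{[\mathbf{h}(\mathbf{d}(x,y))]^{k_{1}}-2|\phi(t)|}{A}\,\mathbf{I}_{a+}^{\mu,\nu;\Psi}(1).
\]
The decisive computation is $\mathbf{I}_{a+}^{\mu,\nu;\Psi}(1)$, which I would obtain from Lemma \ref{lema4} with $\delta=1$, yielding a multiple of $(\Psi(x)-\Psi(a))^{2-\mu}$; since $\Psi$ is increasing this is maximal at $x=b$, and the constant $A$ is calibrated precisely so that $A^{-1}\mathbf{I}_{a+}^{\mu,\nu;\Psi}(1)\leq 1$ on $\Delta$. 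Provided the bracket $[\mathbf{h}(\mathbf{d}(x,y))]^{k_{1}}-2|\phi(t)|$ is nonnegative, multiplying it by this subunital factor and cancelling the two copies of $2|\phi(t)|$ leaves $|(fx)(t)|+|(fy)(t)|\leq[\mathbf{h}(\mathbf{d}(x,y))]^{k_{1}}$.

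It then remains to pass from this pointwise bound to the $\beta$-distance. Taking the supremum over $t\in\Delta$ and using $\mathbf{h}(t)\leq t$ together with the monotonicity of $\mathbf{h}$, I would deduce
\[
\mathbf{h}(\mathbf{d}(fx,fy))\leq\mathbf{d}(fx,fy)\leq[\mathbf{h}(\mathbf{d}(x,y))]^{k_{1}},
\]
which is exactly Eq.(\ref{eq25}). With $f$, $\mathbf{d}$ and $\mathbf{h}$ continuous by hypothesis, Theorem \ref{teo23} now yields a unique fixed point of $f$ and convergence of the Picard iteration $x_{n}=f^{n}x_{0}$; translating back, Eq.(\ref{cont52}) has a unique solution to which the stated iterates converge. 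I expect the main obstacle to lie in this last passage and in the calibration of $A$: one must verify that the value of $A$ really makes $A^{-1}\mathbf{I}_{a+}^{\mu,\nu;\Psi}(1)$ subunital with equality only at $x=b$, keep the bracketed term nonnegative so that the inequality direction is preserved, and control $\mathbf{d}(fx,fy)=\sup|fx|+\sup|fy|$ by the pointwise sum $|fx(t)|+|fy(t)|$, which is where the hypothesis $\mathbf{h}(t)\leq t$ does the essential work.
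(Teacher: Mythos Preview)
Your proposal is correct and follows essentially the same route as the paper: the same complete metric space $\Lambda=C(\Delta,\mathbb{R})$ with the uniform metric, the same $\beta$-distance $\mathbf{d}(x,y)=\sup|x|+\sup|y|$, the same chain of inequalities using positivity and linearity of $\mathbf{I}_{a+}^{\mu,\nu;\Psi}$, Lemma~\ref{lema4} with $\delta=1$ to evaluate $\mathbf{I}_{a+}^{\mu,\nu;\Psi}(1)$ and calibrate against $A$, the passage $\mathbf{h}(\mathbf{d}(fx,fy))\leq\mathbf{d}(fx,fy)\leq[\mathbf{h}(\mathbf{d}(x,y))]^{k_1}$ via $\mathbf{h}(t)\leq t$, and the final appeal to Theorem~\ref{teo23}. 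Your closing remarks about the calibration of $A$ and the passage from the pointwise bound on $|(fx)(t)|+|(fy)(t)|$ to the sum of suprema actually flag subtleties that the paper's proof glosses over without comment, so your level of care is if anything higher.
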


\begin{proof} Let $\Lambda=C\left(\Delta,\mathbb{R}\right) $. First, note that, $\Lambda$ with the metric $d:\Lambda\times \Lambda\rightarrow \Delta_{1} $ given by
\begin{equation*}
d\left( x,y\right) =\underset{t\in \Delta }{\sup }\left\vert x\left( t\right) -y\left( t\right) \right\vert 
\end{equation*}
$\forall$ $x,y\in \Lambda$, is a complete metric space. Now, consider the function $\mathbf{d}:\Lambda\times \Lambda\rightarrow \Delta_{1} $ by
\begin{equation*}
\mathbf{d}\left( x,y\right) =\underset{t\in \Delta }{\sup }\left\vert x\left( t\right) \right\vert +\underset{t\in \Delta }{\sup } \left\vert y\left( t\right) \right\vert 
\end{equation*}
$\forall$ $x,y\in \Lambda$. Clearly, $q$ is a $\beta-$distance on $\Lambda$ and a distance of $d$. We show that $f$ is contractive condition {\rm Eq.(\ref{eq25})}. Suppose that $x,y\in \Lambda$, $t\in \Delta $ and applying $\delta =1$ in the Lemma \ref{lema4}, then we have
\begin{eqnarray*}
\left\vert \left( fx\right) \left( t\right) \right\vert +\left\vert \left(
fy\right) \left(t\right) \right\vert &\leq &\left\vert \phi \left(t\right) \right\vert +\left\vert \mathbf{I}_{a+}^{\mu ,\nu ;\rm\Psi }\mathfrak{W}\left( t,s,x\left( s\right) \right) \right\vert+\left\vert \phi \left(t \right) \right\vert +\left\vert \mathbf{I}%
_{a+}^{\mu ,\nu ;\rm\Psi }\mathfrak{W}\left( t,s,y\left( s\right) \right) \right\vert 
\notag \\
&\leq &2\left\vert \phi \left( t\right) \right\vert +\mathbf{I}_{a+}^{\mu
,\nu ;\rm\Psi }\left[ \frac{\left[ \mathbf{h}\left( \left\vert
x\left( t\right) \right\vert +\left\vert y\left(t \right) \right\vert
\right) \right] ^{k_{1}}-2\left\vert \phi \left( t\right) \right\vert }{A}%
\right]   \notag \\
&\leq &2\left\vert \phi \left( t\right) \right\vert +\left[ \frac{\left[ 
\mathbf{h}\left( \mathbf{d}\left( x,y\right) \right) \right]
^{k_{1}}-2\left\vert \varphi \left(t \right) \right\vert }{A}\right] \mathbf{
I}_{a+}^{\mu ,\nu ;\rm\Psi }(1)   \notag \\
&\leq &\left[ \mathbf{h}\left( \mathbf{d}\left( x,y\right) \right) \right]
^{k_{1}}
\end{eqnarray*}
$\forall$ $x,y\in \Lambda$. 

This implies that 
\begin{equation*}
\underset{t\in \Delta }{\sup }\left\vert \left( fx\right) \left( t\right) \right\vert +\underset{t\in \Delta }{\sup }\left\vert \left( fy\right) \left( t\right) \right\vert \leq \left[ \mathbf{h}\left( \mathbf{d}\left( x,y\right) \right) \right] ^{k_{1}}
\end{equation*}
\textbf{and so}
\begin{equation*}
\mathbf{d}\left( fx,fy\right) \leq \left[ \mathbf{h}\left( \mathbf{d}\left( x,y\right) \right) \right] ^{k_{1}}
\end{equation*}
$\forall$ $x,y\in \Lambda$. Hence, we have

\begin{equation*}
\mathbf{h}\left( \mathbf{d}\left( fx,fy\right) \right) \leq \left[ \mathbf{h}\left( \mathbf{d}\left( x,y\right) \right) \right] ^{k_{1}}
\end{equation*}
$\forall$ $x,y\in \Lambda.$ It follows that $f$ satisfies the condition {\rm Eq.(\ref{eq25})}. Therefore, all conditions of Theorem \ref{teo23} are satisfied and thus $f$ has a unique fixed point. Thus, we conclude the proof.
\end{proof}

\begin{theorem} Let the nonlinear $\rm\Psi-$fractional ${\rm VIE}$
\begin{equation}\label{cont53}
x\left( t\right) =\varphi \left( t\right) +\mathbf{I}_{a+}^{\mu ,\nu ;\rm\Psi } \left[ \mathfrak{W}\left( t,s,x\left( s\right) \right) \right] 
\end{equation}
where $0<\mu <1,$ $0\leq \nu \leq 1$, $a,b\in \mathbb{R}$ with $a<b,$ $\phi :\Delta \rightarrow \mathbb{R}$ and $\mathfrak{W}:\Delta ^{2}\times \mathbb{R}\rightarrow \mathbb{R}$ are two given function. Suppose that the following conditions hold:

\begin{enumerate}
\item  $\varphi $ and $\mathfrak{W}$ are continuous functions;

\item the function $f:C\left(\Delta,\mathbb{R}\right) \rightarrow C\left(\Delta,\mathbb{R}\right) $ defined by 
\begin{equation*}
\left( fx\right) \left( t\right) =\varphi \left( t\right) +\mathbf{I}_{a+}^{\mu ,\nu ;\rm\Psi }\left[ \mathfrak{W}\left( t,s,x\left( s\right) \right) \right] 
\end{equation*}
$\forall$ $x\in C\left(\Delta,\mathbb{R}\right) $ and $\ t\in \Delta $ is a continuous function;

\item there are two functions $\mathbf{h},\phi :\Delta_{1} \rightarrow \Delta_{1} $ with $\mathbf{h}$ is a weak altering distance function and $\phi $ is a right upper semi continuous function such that $\mathbf{h}\left( t\right) >t$, $\forall$ $t>0$ and $ \mathbf{h}\left( t\right) <t$, $\forall$ $t\geq 0$, $\phi \left( 0\right) =0$, for each $x,y\in C\left(\Delta,\mathbb{R}\right)$, we have
\begin{equation*}
\left\vert \mathfrak{W}\left( t,s,x\left( s\right) \right) \right\vert +\left\vert \mathfrak{W}\left( t,s,y\left( s\right) \right) \right\vert \leq \frac{\phi \left( \left\vert x\left( s\right) \right\vert +\left\vert y\left( s\right) \right\vert \right) -2\left\vert \varphi \left( t\right) \right\vert }{A}
\end{equation*}
$\forall$ $t,s\in \Delta $ and $A=\dfrac{\Gamma \left( 1-\nu \left( 1-\mu \right) \right) \Gamma \left( 1+2\nu \left( 1-\mu \right) +\mu \right) }{\Gamma \left( 1+\nu \left( 1-\mu \right) \right) \left( \rm\Psi \left( b\right) -\rm\Psi \left( a\right) \right) ^{2-\mu }}$.
\end{enumerate}

Then the nonlinear $\rm\Psi-$fractional ${\rm VIE}$ {\rm Eq.(\ref{cont53})}, has a unique solution. Moreover, for each $x_{0}\in C\left(\Delta,\mathbb{R}\right)$ the Picard iteration $\left\{ x_{n}\right\} $, which is defined by
\begin{equation}
x_{n}\left( t\right) =\varphi \left( t\right) +\mathbf{I}_{a+}^{\mu ,\nu ;\rm\Psi } \left[ \mathfrak{W}\left( t,s,x_{n-1}\left( s\right) \right) \right] 
\end{equation}
$\forall$ $n\in \mathbb{N}$, converges to a unique solution of the nonlinear $\rm\Psi-$fractional ${\rm VIE}$ {\rm Eq.(\ref{cont53})}.
\end{theorem}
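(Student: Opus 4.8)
The plan is to mirror the strategy of the two preceding theorems, but now to invoke Theorem \ref{teo24} in place of Theorem \ref{teo22}, so that the hypotheses on $\mathbf{h}$ and $\phi$ in item (3) line up exactly with the requirements for $f$ to be a $\beta-$generalized $\mathbf{SS}$-contraction in the sense of Definition \ref{def22}. First I would set $\Lambda=C(\Delta,\mathbb{R})$ equipped with the sup-metric $d(x,y)=\sup_{t\in\Delta}|x(t)-y(t)|$, which is complete, and introduce the auxiliary functional $\mathbf{d}(x,y)=\sup_{t\in\Delta}|x(t)|+\sup_{t\in\Delta}|y(t)|$, verifying against Definition \ref{def21} that $\mathbf{d}$ is a $\beta-$distance on $\Lambda$ and a distance of $d$, the triangle inequality and the lower semicontinuity being immediate from elementary properties of the supremum.

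Next I would pin down the normalizing constant. Applying Lemma \ref{lema4} with $\delta=1$ gives $\mathbf{I}_{a+}^{\mu,\nu;\rm\Psi}(1)=M^{\mu}_{1,\nu}\,(\rm\Psi(x)-\rm\Psi(a))^{2-\mu}$, and since $\rm\Psi$ is increasing and $x\leq b$ one obtains $A\,\mathbf{I}_{a+}^{\mu,\nu;\rm\Psi}(1)\leq 1$ throughout $\Delta$, where $A$ is precisely the constant appearing in hypothesis (3). This identity is the whole reason $A$ is placed in the denominator of the bound on $\mathfrak{W}$: it is engineered so that the factor $\mathbf{I}_{a+}^{\mu,\nu;\rm\Psi}(1)$ is exactly absorbed in the final estimate.

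Then comes the central step. Using the triangle inequality for $|(fx)(t)|+|(fy)(t)|$, the linearity and positivity of $\mathbf{I}_{a+}^{\mu,\nu;\rm\Psi}(\cdot)$ established in the earlier properties theorem, and the pointwise bound on $|\mathfrak{W}(t,s,x(s))|+|\mathfrak{W}(t,s,y(s))|$ from hypothesis (3), I would dominate the right-hand side by $2|\varphi(t)|+\big[(\phi(\mathbf{d}(x,y))-2|\varphi(t)|)/A\big]\,\mathbf{I}_{a+}^{\mu,\nu;\rm\Psi}(1)$, replacing $|x(s)|+|y(s)|$ by $\mathbf{d}(x,y)$ inside $\phi$. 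Invoking $A\,\mathbf{I}_{a+}^{\mu,\nu;\rm\Psi}(1)\leq 1$ collapses this to $\phi(\mathbf{d}(x,y))$; taking suprema over $t$ yields $\mathbf{d}(fx,fy)\leq\phi(\mathbf{d}(x,y))$, and since $\mathbf{h}(t)\leq t$ one deduces $\mathbf{h}(\mathbf{d}(fx,fy))\leq\mathbf{d}(fx,fy)\leq\phi(\mathbf{d}(x,y))$, which is exactly the $\mathbf{SS}$-contraction inequality Eq.(\ref{eq23}).

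Finally, since $\mathbf{h}(0)=0$, every hypothesis of Theorem \ref{teo24} is in force, so $f$ has a unique fixed point in $\Lambda$, which is the unique solution of Eq.(\ref{cont53}), and the Picard iterates converge to it. The main obstacle I anticipate is the central estimate: one must justify the sign of $\phi(\mathbf{d}(x,y))-2|\varphi(t)|$ and, more delicately, the passage from the pointwise argument $|x(s)|+|y(s)|$ to the $\beta-$distance $\mathbf{d}(x,y)$ inside $\phi$, since this implicitly uses monotonicity of $\phi$. The cleanest route is to take the supremum inside $\phi$ first and lean on its right upper semicontinuity, rather than assuming outright that $\phi$ is nondecreasing.
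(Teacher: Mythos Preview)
Your proposal is correct and follows essentially the same route as the paper: set up $\Lambda=C(\Delta,\mathbb{R})$ with the sup-metric and the $\beta$-distance $\mathbf{d}(x,y)=\sup|x|+\sup|y|$, use Lemma~\ref{lema4} with $\delta=1$ to control $\mathbf{I}_{a+}^{\mu,\nu;\rm\Psi}(1)$, run the pointwise estimate to obtain $\mathbf{d}(fx,fy)\le\phi(\mathbf{d}(x,y))$, then pass to $\mathbf{h}(\mathbf{d}(fx,fy))\le\phi(\mathbf{d}(x,y))$ and invoke Theorem~\ref{teo24}. If anything you are more scrupulous than the paper, which performs exactly the same chain of inequalities but does not pause over the monotonicity of $\phi$, the sign of the bracket, or the step $\mathbf{h}(t)\le t$ that you flag.
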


\begin{proof} Let $\Lambda=C\left(\Delta,\mathbb{R}\right) $. First, note that, $\Lambda$ with the metric $d:\Lambda\times \Lambda\rightarrow \Delta_{1} $ given by
\begin{equation*}
d\left( x,y\right) =\underset{t\in \left[ a,b\right] }{\sup }\left\vert x\left( t\right) -y\left( t\right) \right\vert 
\end{equation*}
$\forall$ $x,y\in \Lambda$, is a complete metric space. Now, we consider $\mathbf{d}:\Lambda\times \Lambda\rightarrow \Delta_{1} $ by
\begin{equation*}
\mathbf{d}\left( x,y\right) =\underset{t\in \left[ a,b\right] }{\sup }\left\vert x\left( t\right) \right\vert +\underset{t\in \Delta }{\sup } \left\vert y\left( t\right) \right\vert 
\end{equation*}
$\forall$ $x,y\in \Lambda$. Clearly, $\mathbf{d}$ is a $\beta -$distance on $\Lambda$ and a distance of $d$. We show that $f$ is contractive condition Eq.(\ref{eq23}). Assume that $x,y\in \Lambda$, $t\in \Delta $ and applying $\delta =1$ in the Lemma \ref{lema4}, then we have
\begin{eqnarray*}
\left\vert \left( fx\right) \left( t\right) \right\vert +\left\vert \left( fy\right) \left( t\right) \right\vert &\leq &\left\vert \varphi \left( t\right) \right\vert +\left\vert \mathbf{I}_{a+}^{\mu ,\nu ;\rm\Psi }\mathfrak{W}\left( t,s,x\left( s\right) \right) \right\vert +\left\vert \varphi \left( t\right) \right\vert +\left\vert \mathbf{I}_{a+}^{\mu ,\nu ;\rm\Psi }\mathfrak{W}\left( t,s,y\left( s\right) \right) \right\vert 
\notag \\
&\leq &2\left\vert \varphi \left( t\right) \right\vert +\mathbf{I}_{a+}^{\mu ,\nu ;\rm\Psi }\left[ \frac{\phi \left( \left\vert x\left(t\right) \right\vert +\left\vert y\left( t\right) \right\vert \right)-2\left\vert \varphi \left( t\right) \right\vert }{A}\right]   \notag \\
&\leq &2\left\vert \varphi \left( t\right) \right\vert +\left[ \frac{\phi
\left( \mathbf{d}\left( x,y\right) \right) -2\left\vert \varphi \left( t\right)
\right\vert }{A}\right] \mathbf{I}_{a+}^{\mu ,\nu ;\rm\Psi }(1)
\notag \\
&\leq &\phi \left( q\left( x,y\right) \right) 
\end{eqnarray*}
$\forall$ $x,y\in \Lambda$. 

This implies that 
\begin{equation*}
\underset{t\in \Delta }{\sup }\left\vert \left( fx\right) \left(t\right) \right\vert +\underset{t\in \Delta }{\sup }\left\vert \left( fy\right) \left( t\right) \right\vert \leq \phi \left( \mathbf{d}\left(x,y\right) \right) 
\end{equation*} 
and so
\begin{equation*}
\mathbf{d}\left( fx,fy\right) \leq \phi \left( \mathbf{d}\left( x,y\right) \right) 
\end{equation*}
$\forall$ $x,y\in X$. Hence, we have
\begin{equation*}
\mathbf{h}\left( \mathbf{d}\left( fx,fy\right) \right) \leq \phi \left( \mathbf{d}\left( x,y\right) \right) 
\end{equation*}
$\forall$ $x,y\in \Lambda.$ If follows that $f$ satisfies the condition Eq.(\ref{eq23}). Thus, by Theorem \ref{teo24}, $f$ has a unique fixed point.
\end{proof}

The results presented above are all directed to the $\rm\Psi-$fractional integral. Next, we will briefly discuss population growth through the ${\rm \Psi-H}$ fractional derivative.

Let $N(t)$ be the number of people of a population in time $t$, and $B$ and $M$ the birth rate and the mortality rate, respectively. The population growth rate is given by the differential equation 
\begin{equation}  \label{MAL}
N^{\prime }\left( t\right) =\left( B-M\right) N\left( t\right) =\lambda
N\left( t\right),
\end{equation}
where $\lambda$ is the population growth rate. Suppose that $B$ and $M$ are constants, then $\lambda$ is constant, so if $N(0)=N_{0}$ is the initial population then the solution for the Cauchy problem satisfies \cite{almeida,RCA} 
\begin{equation}  \label{MAL1}
N\left( t\right) =N_{0}e^{\lambda t}.
\end{equation}

On the other hand, using the $\rm\Psi $-fractional derivative, we introduce the respective fractional differential equation \textrm{Eq.(\ref{MAL})},
given by 
\begin{equation}
^{H}\mathscr{D}_{0+}^{\mu ,\nu ;\rm\Psi }\text{ }N\left( t\right) =\lambda N\left(
t\right) ,  \label{MAL2}
\end{equation}%
with $t\geq 0$, $0<\mu <1$ and $0\leq \nu \leq 1$.

From Lemma \ref{lema6}, the solution of this fractional differential equation \textrm{Eq.(\ref{MAL2})}, together with the initial condition $N(0)=N_{0}$, is the function $$N\left( t\right) =N_{0}\mathbf{E}_{\mu }\left( \lambda \left( \rm\Psi \left( t\right) -\rm\Psi \left( 0\right) ^{\mu }\right) \right). $$
As a particular case, choosing $\rm\Psi (x)=x$, we obtain the fractional derivative in the Hilfer sense \cite{JEM1} on the left and consequently the solution of the problem will be given by $N\left( t\right) =N_{0}\mathbf{E}_{\mu }\left( \lambda \left(t^{\mu }\right) \right) $.

\begin{remark}
Recently, Sousa and Oliveira {\rm \cite{JEM1}} introduced the so-called ${\rm \Psi-H}$ fractional derivative. As a particular case, it is possible to obtain the ${\rm \Psi-C}$ fractional derivative. In addition, Almeida {\rm \cite{RCA}} introduces the ${\rm \Psi-C}$ fractional derivative discuss some functions, among them, one involving the Malthusian model. The solution of the Malthusian fractional model, can be introduced by ${\rm \Psi-H}$, ${\rm \Psi-C}$, ${\rm \Psi-RL}$ fractional derivatives {\rm \cite{AHMJ} } and now by the $\rm\Psi-$fractional derivative on the left, is always the same, as proved in {\rm Lemma \ref{lema6}}. In this sense, we will omit the behavior of the solution that refers to population growth, because it behaves similar to the graph of the ${\rm \Psi-C}$ fractional derivative model.
\end{remark}

\section{Concluding remarks}

\textbf{The main purpose of this paper was to introduce a new fractional integral the so-called $\rm\Psi-$fractional integral through the ${\rm \Psi-RL}$ fractional integral. Some results were discussed and the observations were presented, highlighting the relation with the fractional operator proposed by Xu and Agrawal \cite{xu}. We presented a brief study about the uniqueness of solutions of the nonlinear $\rm\Psi-$fractional ${\rm VIE}$ via $\beta-$distance and $\beta-$generalized $\mathbf{SS}$-contraction function. We also saw the importance of ${\rm M-L}$ functions in the solution of fractional differential equations, particularly population growth. Note the importance of these new operators, which are related to another function, i.e. $\rm\Psi(\cdot)$, allowing the choice and consequently obtain a fractional operator with the characteristics presented in both definitions.}

We conclude that, from an example involving the one parameter ${\rm M-L}$ function, generalized fractional operators with the freedom of choice of the $\rm\Psi$ function, allows results closer to the reality. 

\textbf{Is important to note that some points raised during the development of the paper are different from the Riemann-Liouville fractional integral, as below:}
\begin{itemize}
\item \textbf{The proper definition of the $\rm\Psi-$fractional integral containing the ${\rm \Psi-RL}$ fractional derivative and the integral $I_{a+}^{1;\rm\Psi }\left( \cdot \right)$;}

\item \textbf{The $\rm\Psi-$fractional integral $\mathbf{I}_{a+}^{\mu ,\nu ;\rm\Psi }\left( \cdot\right) $ as a particular case, of the Riemann-Liouville integral in the limit $\mu \rightarrow 1$;}

\item \textbf{To obtain the identity operator, working with the Riemann-Liouville fractional integral, we take $\mu \rightarrow 0$. For $\rm\Psi-$fractional integral $\mathbf{I}_{a+}^{\mu ,\nu ;\rm\Psi }\left( \cdot \right) $, besides applying the limit $\mu \rightarrow 0,$ we also have to take the limit with $\nu \rightarrow 1$;}

\item \textbf{Also, we do not resort the same function$^{H}\mathscr{D}_{a+}^{\mu ,\nu ;\rm\Psi }\text{ }\mathbf{I}_{a+}^{\mu ,\nu ;\rm\Psi }f\left( \cdot \right) \neq f\left( \cdot \right)$.}

\end{itemize}

\textbf{One of the main differences between the Riemann-Liouville fractional integral and the $\rm\Psi-$fractional integral $\mathbf{I}_{a+}^{\mu ,\nu ;\rm\Psi }\left( \cdot \right) $, is the calculation of the Laplace transform of both integrals. For Riemann-Liouville, we know that $\mathscr{L}\left( \mathcal{I}_{a+}^{\mu }f\left( t\right) \right) =\dfrac{F\left( s\right) }{s^{\mu }},$ where $F\left( s\right) =\mathscr{L}\left( f\left( t\right) \right) $ is the Laplace transform. On the other hand, it is not possible to calculate the Laplace transform of the $\rm\Psi-$fractional integral $\mathbf{I}_{a+}^{\mu ,\nu ;\rm\Psi }\left( \cdot \right) $, since it does not yet have a Laplace transform with respect to another function $\rm\Psi$. The same argument is also valid for the Fourier transform. In this sense, the definition proposed here is in fact distinct from the classical definition of Riemann-Liouville fractional integral.}

As future works, one can think the possibility of proposing fractional operators of variable order (one or two) \cite{almeida10,samko} and realize functions, such as studying existence, uniqueness, stability and continuous dependence of Cauchy type problems \cite{sousa,furati,zhou,peng}.

\section*{Acknowledgment}

\textbf{We are grateful to the editor and anonymous referee for the suggestions that have improved the manuscript.}

\bibliography{ref}
\bibliographystyle{plain}

\end{document}